\newcommand{\abs}[1]{\left|\,#1\,\right|}
\newcommand{\bQ}{\mathbb{Q}}
\newcommand{\bR}{\mathbb{R}}
\newcommand{\bZ}{\mathbb{Z}}
\newcommand{\cC}{{\mathcal{C}}}
\newcommand{\cS}{{\mathcal{S}}}
\newcommand{\cX}{{\mathcal{X}}}
\newcommand{\disp}{\displaystyle}
\newcommand{\et}{\quad\mbox{and}\quad}
\newcommand{\GL}{\mathrm{GL}}
\newcommand{\norm}[1]{\|\hspace*{1pt}#1\hspace*{1pt}\|}
\newcommand{\ssi}{\quad\Longleftrightarrow\quad}
\newcommand{\tdelta}{{\widetilde{\delta}}}
\newcommand{\tP}{{\widetilde{P}}}
\newcommand{\tuP}{{\widetilde{\uP}}}
\newcommand{\tuv}{{\widetilde{\uv}}}
\newcommand{\trT}{{}^tT}
\newcommand{\tuu}{{\widetilde{\uu}}}
\newcommand{\tux}{{\widetilde{\ux}}}
\newcommand{\tV}{{\widetilde{V}}}
\newcommand{\ua}{\mathbf{a}}
\newcommand{\ub}{\mathbf{b}}
\newcommand{\uc}{\mathbf{c}}
\newcommand{\ud}{\mathbf{d}}
\newcommand{\ue}{\mathbf{e}}
\newcommand{\uL}{\mathbf{L}}
\newcommand{\uP}{\mathbf{P}}
\newcommand{\uu}{\mathbf{u}}
\newcommand{\uv}{\mathbf{v}}
\newcommand{\uuv}{\underline{\uv}}
\newcommand{\uw}{\mathbf{w}}
\newcommand{\ux}{\mathbf{x}}
\newcommand{\uux}{\underline{\ux}}
\newcommand{\uy}{\mathbf{y}}
\newcommand{\uz}{\mathbf{z}}
\newcommand{\ualpha}{{\boldsymbol{\alpha}}}
\newcommand{\vol}{\mathrm{vol}}
\newcommand{\xbar}{\overline{x}}
\DeclareMathOperator{\dist}{dist}
\DeclareMathOperator{\proj}{proj}
\newtheorem{theorem}{Theorem}[section]
\newtheorem{lemma}[theorem]{Lemma}
\newtheorem{proposition}[theorem]{Proposition}
\newtheorem{cor}[theorem]{Corollary}
\theoremstyle{remark}
\newtheorem{definition}[theorem]{Definition}
\numberwithin{equation}{section}
\begin{document}

\title[Diophantine approximation with constraints]
{Diophantine approximation with constraints}
\author{J\'er\'emy Champagne}
\address{
Department of Pure Mathematics\\
University of Waterloo\\
Waterloo, Ontario N2L 3G1, Canada}
\email{jchampagne@uwaterloo.ca}
\author{Damien Roy}
\address{
   D\'epartement de Math\'ematiques\\
   Universit\'e d'Ottawa\\
   150 Louis Pasteur\\
   Ottawa, Ontario K1N 6N5, Canada}
\email{droy@uottawa.ca}
\subjclass[2020]{Primary 11J13; Secondary 11H50, 11J25}

\keywords{Diophantine approximation, sign constraints, angular constraints, parametric geometry of numbers, geometry of numbers.}

\begin{abstract}
Following Schmidt, Thurnheer and Bugeaud-Kristensen, we study how Dirichlet's
theorem on linear forms needs to be modified when one requires that the vectors
of coefficients of the linear forms make a bounded acute angle with respect
to a fixed proper non-zero subspace $V$ of $\bR^n$.
Assuming that the point of $\bR^n$ that we are approximating has linearly independent
coordinates over $\bQ$, we obtain best possible exponents of approximation which
surprisingly depend only on the dimension of $V$.  Our estimates are derived
by reduction to a result of Thurnheer, while their optimality follows from a new
general construction in parametric geometry of numbers involving angular constraints.
\end{abstract}

\maketitle

\baselineskip=15.2pt

%
%

\section{Introduction}
\label{sec:intro}

Given an integer $n\ge 2$ and an arbitrary point $\uu\in\bR^n$ with linearly
independent coordinates over $\bQ$, we know that there are infinitely many
integer points $\ux\in\bZ^n$ for which
\begin{equation}
\label{intro:eq:Dirichlet}
 \abs{\ux\cdot\uu} \le c_1\norm{\ux}^{-(n-1)}
\end{equation}
for a constant $c_1>0$ that depends only on $\uu$.  Here the dot represents the
usual scalar product in $\bR^n$ and the norm is the associated Euclidean norm
$\norm{\ux}=(\ux\cdot\ux)^{1/2}$. This is a result of Dirichlet
\cite[Chapter 2, Corollary 1D]{Sc1980}, and it is best possible in the sense that there
are points $\uu\in\bR^n$ with linearly independent coordinates over $\bQ$ which
satisfy $\abs{\ux\cdot\uu} \ge c_2\norm{\ux}^{-(n-1)}$ for any non-zero
$\ux\in\bZ^n$, with another constant $c_2>0$.  The latter occurs for example
when the coordinates of $\uu$ form a basis of a real number field of degree $n$
\cite[Chapter 2, Theorem 4A]{Sc1980}.

Of course, any point $\ux\in\bZ^n$ of large norm that satisfies \eqref{intro:eq:Dirichlet}
makes a small angle with the maximal subspace $\uu^\perp$ of $\bR^n$ orthogonal
to $\uu$.  In the present paper, we study how the right hand side of
\eqref{intro:eq:Dirichlet} has to be modified when $\ux$ is requested to
make a small angle with respect to a fixed proper non-zero subspace $V$ of $\bR^n$.
This line of research was initiated in 1976 by W.~M.~Schmidt \cite{Sc1976} and followed
by several authors \cite{Th1990, BK2009, Mo2012, Ro2014, Ch2021}, in chronological order.
Our first main result is the following statement where $\dist(\ux,V)$ denotes the sine
of the angle between a non-zero point $\ux$ and a non-zero subspace $V$ in $\bR^n$.

\begin{theorem}
\label{intro:thm1}
Let $m,n$ be integers with $m\ge 1$ and $n\ge m+2$, and let $V$ be a subspace of
$\bR^n$ of dimension $m+1$.  Set
\begin{equation}
 \rho=\rho_m=\frac{m+\sqrt{m^2+4m}}{2}.
 \label{intro:thm1:eq:rho}
\end{equation}
\begin{itemize}[labelindent=10pt, leftmargin=*]
\item[$1)$] For each point $\uu$ of $\bR^n$ whose coordinates are linearly independent
over $\bQ$, and each pair of numbers $\delta>0$ and $\epsilon>0$, there exists a
non-zero point $\ux$ of $\bZ^n$ with
\begin{equation}
\label{intro:thm1:eq1}
 \dist(\ux,V)\le \delta \et \abs{\ux\cdot\uu}\le \epsilon\norm{\ux}^{-\rho}.
\end{equation}
\item[$2)$] Conversely, let $\psi\colon[1,\infty)\to(0,\infty)$ be any unbounded
monotonically increasing function and let $\delta=1/\max\{4n,\,24(n-m)\}$.
There exists a point $\uu$ of $\bR^n$ with linearly independent coordinates
over $\bQ$ such that at most finitely many non-zero points $\ux$ of $\bZ^n$ satisfy
\begin{equation}
\label{intro:thm1:eq2}
 \dist(\ux,V)\le\delta
 \et
 \abs{\ux\cdot\uu}\le \psi(\norm{\ux})^{-1}\norm{\ux}^{-\rho}.
\end{equation}
\end{itemize}
\end{theorem}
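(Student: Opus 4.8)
The plan is to prove the two parts by entirely different routes: the existence statement~1) by reduction to a theorem of Thurnheer, and the optimality statement~2) by an explicit construction carried out in the parametric geometry of numbers, whose novel feature is that it must keep track of the \emph{directions} of the relevant minimal vectors, not merely their lengths.

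For~1), I would fix $\uu$ with $\bQ$-linearly independent coordinates and $\delta,\epsilon>0$; since lowering $\delta$ only strengthens the conclusion, we may assume $\delta$ is small. Write $\uu=\uu_V+\uu'$ with $\uu_V\in V$ and $\uu'\in V^\perp$, and split each $\ux\in\bR^n$ as $\ux=\ux_0+\ux_1+\ux'$ with $\ux_0\in V\cap\uu_V^\perp$, $\ux_1\in\langle\uu_V\rangle$ and $\ux'\in V^\perp$, so that $\ux\cdot\uu=\ux_1\cdot\uu_V+\ux'\cdot\uu'$. A non-zero $\ux\in\bZ^n$ with $\dist(\ux,V)\le\delta$ satisfies $\norm{\ux'}\le\delta\norm{\ux}$, and if in addition $\abs{\ux\cdot\uu}$ is small compared with $\norm{\ux}$, then $\ux_1\cdot\uu_V$ is pinned, up to that small quantity, to $-\ux'\cdot\uu'$, which forces $\norm{\ux_1}$ to be $O_\uu(\delta\norm{\ux})$ as well; thus such an $\ux$ lies in a cone of fixed aperture around the $m$-dimensional subspace $V\cap\uu^\perp$, while being much closer still to the hyperplane $\uu^\perp$. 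Producing integer points of norm $\le X$ meeting these two requirements, with $\abs{\ux\cdot\uu}$ of size $\le\epsilon X^{-\rho}$, is an instance of the restricted approximation problem treated by Thurnheer in \cite{Th1990}; feeding his theorem the data of an $m$-dimensional subspace in $\bR^n$ produces infinitely many solutions for every $\rho$ up to the exponent it furnishes, and a short computation identifies that exponent with the positive root of $t^{2}-mt-m=0$, namely $\rho_m$ — which, remarkably, depends only on $m=\dim V-1$ and not on $n$. After rescaling $\epsilon$, this yields~1) for integer points of every size.

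For~2), I would build a single point $\uu$, with the construction guided by the parametric geometry of numbers. To a candidate $\uu$ one attaches a one-parameter family of symmetric convex bodies such as
\[
 \cC_q=\bigl\{\ux\in\bR^n:\ \norm{\proj_V\ux}\le e^{q},\ \norm{\proj_{V^\perp}\ux}\le\delta e^{q},\ \abs{\ux\cdot\uu}\le e^{-\rho q}\bigr\}\qquad(q\ge0),
\]
and studies, for each $q$, not only the successive minima $\lambda_1(q)\le\cdots\le\lambda_n(q)$ of $\cC_q$ relative to $\bZ^n$ but also the rational subspaces spanned by the lattice vectors attaining the small minima. A non-zero $\ux$ satisfying \eqref{intro:thm1:eq2} yields, at $q=\log\norm{\ux}$, a lattice vector lying in $\cC_q$, of norm essentially $e^q$, making angle $\le\delta$ with $V$, and with $\abs{\ux\cdot\uu}$ even smaller than $e^{-\rho q}$; so it is enough to design $\uu$ for which, for all large $q$, no lattice vector with all these properties exists. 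I would build $\uu$ as the limit of a sequence of rational points, determined by a sequence of integer vectors $\ux_1,\ux_2,\dots$ together with nested subspace data, chosen so that the trajectory $q\mapsto(\lambda_i(q))$ is asymptotically self-similar with a scaling ratio tied to the identity $\rho_m^{2}=m(\rho_m+1)$ — this is what produces the extremal exponent $\rho_m$ — while arranging that the minimal vectors which \emph{do} make a small angle with $V$ are never responsible for a value of $\abs{\ux\cdot\uu}$ below $\norm{\ux}^{-\rho_m}$. The arbitrary monotone function $\psi$ is absorbed by a diagonal argument: at stage $j$ one takes $\ux_{j+1}$ large enough in terms of $\psi$ and of the earlier choices.

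The crux of the paper, and the step I expect to be hardest, is this construction, and within it the angular bookkeeping. In the classical theory one prescribes and realizes only a prescribed behaviour of the successive minima; here one must simultaneously prescribe and realize the directions of the short vectors, so that the constraint $\dist(\ux,V)\le\delta$ is genuinely active. Measuring how much room remains for this is exactly what pins down the threshold $\delta\le 1/\max\{4n,\,24(n-m)\}$: the angle estimates that force a competing $\ux$ to violate one of the two inequalities in \eqref{intro:thm1:eq2} degrade as $\delta$ grows, and $1/\max\{4n,24(n-m)\}$ is the range over which they still apply. The remaining points — that the limit point $\uu$ has $\bQ$-linearly independent coordinates, so that the statement is not vacuous; that the self-similar pattern is internally consistent; and that the construction really defeats every admissible $\psi$ — are routine once the core construction and its angular estimates are in place.
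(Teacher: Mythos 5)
Your overall strategy coincides with the paper's (part 1 by reduction to Thurnheer, part 2 by a parametric-geometry-of-numbers construction with directional control), but in both halves the step that actually carries the proof is missing. For part 1, Thurnheer's result \cite[Theorem 1(b)]{Th1990} is a statement in $\bR^{m+2}$ only: it produces $\ux\in\bZ^{m+2}$ with $\abs{\ux\cdot\ud}\le\delta\norm{\ux}$ and $\abs{\ux\cdot\uu}\le\epsilon\norm{\ux}^{-\rho_m}$ for a single vector $\ud$, i.e.\ a hyperplane constraint in dimension $m+2$; it cannot simply be ``fed the data of an $m$-dimensional subspace of $\bR^n$''. The substance of the paper's proof of part 1 is exactly the reduction you skip: one needs $T\in\GL_n(\bZ)$ whose first $m+1$ columns are directionally within a prescribed tolerance of an orthonormal basis of $V$ (a nontrivial lemma, related to a result of Erd\H{o}s), one then works inside the rank-$(m+2)$ sublattice $T(\langle\ue_1,\dots,\ue_{m+2}\rangle_\bZ)$, applies Thurnheer to the first $m+2$ coordinates of $\trT(\uu)$ (which are again $\bQ$-linearly independent), and controls the distortion of norms and projective distances under $T$. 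Your preliminary observation that solutions lie in a cone about $V\cap\uu^\perp$ does not substitute for this: it only localizes hypothetical solutions (and the pinning of the $\uu_V$-component even fails when $\uu\perp V$, since then $\uu_V=0$); it produces no integer points.

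For part 2 you explicitly defer what you call the crux, and that crux is essentially the whole proof. What is needed, and what the paper supplies, is (i) a realization theorem producing a unit vector $\uu$ together with a coherent sequence of bases of $\bZ^n$ whose members simultaneously track a prescribed rigid $n$-system up to bounded error and satisfy $\dist(\ux_j^{(i)},\uv_j^{(i)})\le\delta$ for prescribed directions, proved by an explicit recursion and a limit of orthogonal directions; (ii) a concrete system whose parameters encode both the exponent, via $\log Y_i=(\rho/m)\log X_{i+r-1}$ and the identity $m+m/\rho=\rho$, and the function $\psi$, via $X_i\cdots X_{i+r-1}\le\psi(X_{i+r}/X_i)$ (your ``diagonal argument'' is this growth condition, but it must be checked against the final inequality, not just invoked); and (iii) the verification that \emph{every} integer point $\ux$ of large norm with $\dist(\ux,V)\le\delta$ fails \eqref{intro:thm1:eq2}. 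Point (iii) is stronger than your stated aim of arranging that the small-angle \emph{minimal} vectors are never responsible for small $\abs{\ux\cdot\uu}$: one must show that an arbitrary $\ux$ in the $\delta$-cone about $V$ is forced to be a linear combination of $r$ specific nearby basis vectors with all coefficients of comparable size (this is where the threshold $\delta\le 1/(24(n-m))$ enters, through a quantitative lemma about near-orthonormal frames), whence $\abs{\ux\cdot\uu}$ is bounded below by the dominant term, together with a separate case analysis when the trajectory of $\ux$ is governed by $q+\log\abs{\ux\cdot\uu}$. As written, your proposal is a correct description of the route the paper takes, but it does not contain the constructions or estimates that make either part of the theorem true.
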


For fixed $\uu$ and $\delta$ as in part 1) of the theorem, we obtain infinitely
many non-zero points $\ux\in\bZ^n$ with $\dist(\ux,V)\le \delta$ and
$\abs{\ux\cdot\uu}\le \norm{\ux}^{-\rho}$ by letting $\epsilon$ tend to zero.
On the other hand, if we choose $\psi(t)=t^\eta$ for some $\eta>0$,
then part 2) provides  $\uu$ and $\delta$ for which only finitely many
non-zero points $\ux\in\bZ^n$ have $\dist(\ux,V)\le \delta$ and
$\abs{\ux\cdot\uu}\le \norm{\ux}^{-\rho-\eta}$.  Thus the exponent $\rho$ in
\eqref{intro:thm1:eq1} is best possible.  Since
\begin{equation}
\label{intro:ineq:rho}
 \rho-m = \frac{\rho}{\rho+1} \in (0,1),
\end{equation}
we have $m<\rho<m+1$, and so $\rho$ is strictly smaller than Dirichlet's exponent
$n-1$ in \eqref{intro:eq:Dirichlet}.

\begin{cor}
\label{intro:cor0}
The statement of Theorem \ref{intro:thm1} remains true if the condition
$\dist(\ux,V)\le \delta$ in \eqref{intro:thm1:eq1} and \eqref{intro:thm1:eq2}
is replaced by $\dist(\ux,W)\le\delta$ where $W=V\cap\uu^\perp$.
\end{cor}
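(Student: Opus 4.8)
The plan is to deduce Corollary \ref{intro:cor0} directly from Theorem \ref{intro:thm1}, using that $W=V\cap\uu^\perp$ is contained in $V$. Since for a non-zero $\ux\in\bR^n$ orthogonal projection onto the larger space $V$ is at least as close as projection onto $W$, one has $\dist(\ux,W)\ge\dist(\ux,V)$, so the constraint $\dist(\ux,W)\le\delta$ is \emph{stronger} than $\dist(\ux,V)\le\delta$. For part 2) this already settles matters: taking for $\uu$ and $\delta$ the point and value provided by part 2) of Theorem \ref{intro:thm1}, every non-zero $\ux\in\bZ^n$ that satisfies \eqref{intro:thm1:eq2} with $W$ in place of $V$ also satisfies it with $V$, and there are only finitely many such $\ux$. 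For part 1), conversely, the conclusion with $W$ is a strengthening, so some work is needed; but if $V\subseteq\uu^\perp$ then $W=V$ and nothing is to be proved, so I may assume $V\not\subseteq\uu^\perp$, in which case $\dim W=m$ and $\proj_V(\uu)\ne 0$. I set $c_V=\norm{\proj_V(\uu)}>0$.

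The key step is the elementary identity $\dist(\uv,W)=\abs{\uv\cdot\uu}/(c_V\norm{\uv})$, valid for every non-zero $\uv\in V$: indeed $W$ is the kernel in $V$ of the linear form $\uv\mapsto\uv\cdot\uu=\uv\cdot\proj_V(\uu)$, whose operator norm on $V$ is exactly $c_V$, so the Euclidean distance from $\uv$ to $W$ equals $\abs{\uv\cdot\uu}/c_V$. Now I take a non-zero $\ux\in\bZ^n$ with $\dist(\ux,V)\le\delta'$ and $\abs{\ux\cdot\uu}\le\epsilon'\norm{\ux}^{-\rho}$, and write $\ux=\ux_V+\ux'$ with $\ux_V=\proj_V(\ux)$, so that $\norm{\ux'}=\dist(\ux,V)\norm{\ux}\le\delta'\norm{\ux}$ and $\norm{\ux_V}\ge(1-\delta')\norm{\ux}$. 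Since $\abs{\ux_V\cdot\uu}\le\abs{\ux\cdot\uu}+\norm{\ux'}\norm{\uu}\le\epsilon'\norm{\ux}^{-\rho}+\delta'\norm{\uu}\norm{\ux}$ and $\norm{\ux}\ge 1$, the identity gives
\[
 \dist(\ux_V,W)\le\frac{\epsilon'\norm{\ux}^{-\rho-1}+\delta'\norm{\uu}}{c_V(1-\delta')}\le\frac{\epsilon'+\delta'\norm{\uu}}{c_V(1-\delta')}.
\]
Writing $\uw=\proj_W(\ux_V)$ and using $\norm{\ux_V}\le\norm{\ux}$, one then gets $\norm{\ux-\uw}\le\norm{\ux'}+\norm{\ux_V-\uw}\le\bigl(\delta'+\dist(\ux_V,W)\bigr)\norm{\ux}$, that is, $\dist(\ux,W)\le\delta'+\dist(\ux_V,W)$.

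It remains to feed suitable parameters into Theorem \ref{intro:thm1}. Given $\uu$, $\delta$ and $\epsilon$ as in the corollary, I choose $\delta'=\min\{1/2,\,\delta/2,\,c_V\delta/(8\norm{\uu})\}$ and $\epsilon'=\min\{\epsilon,\,c_V\delta/8\}$; then the last displayed bound is at most $\delta/2$, so any $\ux$ as above satisfies $\dist(\ux,W)\le\delta'+\dist(\ux_V,W)\le\delta'+\delta/2\le\delta$. Applying part 1) of Theorem \ref{intro:thm1} with $\delta'$ and $\epsilon'$ yields a non-zero $\ux\in\bZ^n$ with $\dist(\ux,V)\le\delta'$ and $\abs{\ux\cdot\uu}\le\epsilon'\norm{\ux}^{-\rho}\le\epsilon\norm{\ux}^{-\rho}$, and hence $\dist(\ux,W)\le\delta$, which is exactly the assertion of part 1) with $W$ in place of $V$. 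There is no real obstacle here: the only points requiring care are that $c_V>0$ — guaranteed by having reduced to the case $V\not\subseteq\uu^\perp$ — and the routine bookkeeping of the constants $\delta'$ and $\epsilon'$.
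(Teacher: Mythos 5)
Your proof is correct. The overall skeleton coincides with the paper's: part 2) is immediate from $\dist(\ux,V)\le\dist(\ux,W)$ for the non-zero subspace $W\subseteq V$, and part 1) is obtained by feeding shrunken parameters $\delta',\epsilon'$ into Theorem \ref{intro:thm1} and using that smallness of $\abs{\ux\cdot\uu}$ forces $\ux$ to lie near $\uu^\perp$. Where you genuinely diverge is in the key quantitative step converting ``$\dist(\ux,V)\le\delta'$ and $\abs{\ux\cdot\uu}$ small'' into $\dist(\ux,W)\le\delta$: the paper invokes its Lemma \ref{red:lemma3}, a general inequality $\dist(\ux,V_1\cap V_2)\le\kappa\max\{\dist(\ux,V_1),\dist(\ux,V_2)\}$ whose constant $\kappa$ comes from a linear change of coordinates (Lemma \ref{red:lemma2}) and is not explicit, whereas you exploit the specific fact that $\uu^\perp$ is a hyperplane, first reducing to the case $V\not\subseteq\uu^\perp$ (the case $W=V$ being vacuous), then using the exact formula $\dist(\uv,W)=\abs{\uv\cdot\uu}/(c_V\norm{\uv})$ for $\uv\in V$ with $c_V=\norm{\proj_V(\uu)}$, together with the orthogonal decomposition $\ux=\proj_V(\ux)+\proj_{V^\perp}(\ux)$ and the bound $\dist(\ux,W)\le\delta'+\dist(\proj_V(\ux),W)$. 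Your route is more elementary and self-contained, and it yields completely explicit admissible choices of $\delta'$ and $\epsilon'$ in terms of $c_V$ and $\norm{\uu}$; the paper's route is less explicit but rests on a lemma valid for arbitrary pairs of subspaces, which is reused elsewhere and avoids any case distinction on whether $V\subseteq\uu^\perp$. All the individual steps you use (the hyperplane distance identity, $\norm{\proj_{V^\perp}(\ux)}=\dist(\ux,V)\norm{\ux}$, the choice of constants giving the final bound $\delta'+\delta/2\le\delta$, and the need for $\norm{\ux}\ge1$ and $\delta'\le 1/2$) check out.
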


In fact, as we will see, this provides an equivalent form of the theorem.  Note
that, when $V$ is defined over $\bQ$, we have $V\not\subseteq\uu^\perp$
and so $\dim(W)=m$, in the notation of the above corollary.

Theorem \ref{intro:thm1} extends results of several authors.  In the case where
$m=1$ and
\begin{equation}
\label{intro:choixV:eq1}
 V=\{(x_1,\dots,x_n)\in\bR^n\,;\, x_2=\cdots=x_n\},
\end{equation}
we will see that it admits the following consequence where
\begin{equation*}
\label{intro:ineq:gamma}
 \gamma = \rho_1=(1+\sqrt{5})/2\simeq 1.618
\end{equation*}
denotes the golden ratio.

\begin{cor}
\label{intro:cor1}
Let $n\ge 3$ be an integer.  For each point $\uu$ of $\bR^n$ with linearly
independent coordinates over $\bQ$ and for each $\epsilon>0$, there exists a point
$\ux=(x_1,\dots,x_n)$ of $\bZ^n$ with
\begin{equation}
 \label{intro:cor1:eq1}
 x_2,\dots,x_n>0  \et \abs{\ux\cdot\uu}\le \epsilon\norm{\ux}^{-\gamma}.
\end{equation}
On the other hand, for each unbounded monotonically increasing function
$\psi$ from $[1,\infty)$ to $(0,\infty)$, there exists a point $\uu$ of $\bR^n$ with linearly
 independent coordinates over $\bQ$ for which at most finitely many points
$\ux=(x_1,\dots,x_n)$ of $\bZ^n$ satisfy
\begin{equation}
\label{intro:cor1:eq2}
 x_2,\dots,x_n>0
 \et
 \abs{\ux\cdot\uu}\le \psi(\norm{\ux})^{-1}\norm{\ux}^{-\gamma}.
\end{equation}
\end{cor}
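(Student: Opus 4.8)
The plan is to derive Corollary~\ref{intro:cor1} from Theorem~\ref{intro:thm1} applied with $m=1$ and $V=\{(x_1,\dots,x_n)\in\bR^n:x_2=\cdots=x_n\}$, a subspace of dimension $m+1=2$, so that $\rho=\rho_1=\gamma$ by \eqref{intro:thm1:eq:rho}.

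For the first assertion I would pass to the equivalent form given by Corollary~\ref{intro:cor0}. Set $\sigma=u_2+\cdots+u_n$; linear independence of the coordinates of $\uu$ over $\bQ$ gives $u_1\neq0$ and $\sigma\neq0$, so $W:=V\cap\uu^\perp$ is the line $\bR\ua$ with $\ua=(-\sigma,u_1,\dots,u_1)$, whose coordinates of index $2,\dots,n$ all equal the nonzero number $u_1$. Fix $\delta=\delta(\uu)>0$ small (for instance $\delta<\abs{u_1}/\sqrt{\sigma^2+n\,u_1^2}$\,) so that every nonzero $\ux$ with $\dist(\ux,W)\le\delta$ has all of its coordinates $x_2,\dots,x_n$ nonzero and of one and the same sign: indeed $\ux$ then differs from $c\,\ua/\norm{\ua}$, where $c=\ux\cdot\ua/\norm{\ua}$ has $\abs{c}\ge\sqrt{1-\delta^2}\,\norm{\ux}$, by a vector of norm at most $\delta\norm{\ux}$, while $\abs{c\,u_1}/\norm{\ua}>\delta\norm{\ux}$ by the choice of $\delta$. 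Applying part~1) of Theorem~\ref{intro:thm1}, in the form of Corollary~\ref{intro:cor0}, with this $\delta$ and the given $\epsilon$ produces a nonzero $\ux\in\bZ^n$ with $\dist(\ux,W)\le\delta$ and $\abs{\ux\cdot\uu}\le\epsilon\norm{\ux}^{-\gamma}$; replacing $\ux$ by $-\ux$ if necessary (which changes neither $\abs{\ux\cdot\uu}$ nor $\norm{\ux}$ nor $\dist(\ux,W)$) we may assume $x_2,\dots,x_n>0$, which is exactly \eqref{intro:cor1:eq1}. It is the choice $\sigma\neq0$, i.e.\ $\ue_1\notin W$, that rules out the degenerate alignment $\ux\parallel\ue_1$; no largeness of $\norm{\ux}$ is needed here.

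The second assertion is the delicate one, and it is not a formal consequence of part~2) of Theorem~\ref{intro:thm1}: the positive cone $\cC=\{\ux:x_2,\dots,x_n>0\}$ is full-dimensional and cannot lie inside the angular $\delta$-neighbourhood of the $2$-dimensional $V$, so the condition $x_2,\dots,x_n>0$ is genuinely more permissive than $\dist(\ux,V)\le\delta$. The plan is to control $\cC\cap\bZ^n$ by decomposing it relative to $\uu$. If $\ux\in\cC\cap\bZ^n$ has large norm and $\abs{\ux\cdot\uu}$ small, then $\ux$ lies near the hyperplane $\uu^\perp$, inside which the cone $\uu^\perp\cap\overline{\cC}$ is simplicial with extreme rays along the vectors $\ub_j=u_1\ue_j-u_j\ue_1$ ($j=2,\dots,n$, up to sign). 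Grouping the coordinates $x_2,\dots,x_n$ according to which of them are $\asymp\norm{\ux}$ singles out a subset $J\subseteq\{2,\dots,n\}$; then $\ux$ lies close to the face $\{\sum_{j\in J}t_j\ub_j:t_j\ge0\}$, which, being a simplicial cone, lies within a bounded angle of the single line $\bR\ua_J$ with $\ua_J=\sum_{j\in J}\ub_j$ (for $J=\{2,\dots,n\}$ this line is $W$ itself). Since $\bR\ua_J$ has dimension $1=m$, the optimal exponent attached to an angular constraint towards $\bR\ua_J$ is again $\rho_1=\gamma$. One is thereby reduced to choosing a single $\uu$ for which, for each of the finitely many lines $\bR\ua_J$ ($\varnothing\neq J\subseteq\{2,\dots,n\}$), only finitely many $\ux$ lie within a fixed angle of $\bR\ua_J$ and satisfy $\abs{\ux\cdot\uu}\le\psi(\norm{\ux})^{-1}\norm{\ux}^{-\gamma}$ — and this is what the general construction behind part~2) of Theorem~\ref{intro:thm1} provides when run simultaneously for this finite family, yielding \eqref{intro:cor1:eq2}.

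The hard part lies entirely in the second assertion, in three interlocking points: (i) the positive cone is full-dimensional, so the single subspace $V$ must be replaced by the finite family of cone faces above; (ii) these faces have non-negligible opening angles, so one needs the construction of part~2) of Theorem~\ref{intro:thm1} for the corresponding larger values of $\delta$ and for the whole family at once; and (iii) one must check that along every face the relevant optimal exponent is $\rho_1=\gamma$, i.e.\ that the bulk of the positive cone does not secretly permit approximation with an exponent exceeding $\gamma$. This last point is the crux, and it is precisely where the flexibility of the parametric-geometry-of-numbers construction underlying Theorem~\ref{intro:thm1} is indispensable.
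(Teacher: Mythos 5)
Your treatment of the first assertion is correct and is essentially the paper's own argument: pass to Corollary \ref{intro:cor0}, note that $W=V\cap\uu^\perp$ is spanned by a vector of the form $(-\sigma,u_1,\dots,u_1)$ with $u_1\sigma\neq 0$, take $\delta$ small in terms of $\uu$, and flip the sign of $\ux$ if needed. The gap is in the second assertion. There, the step you defer to ``the general construction behind part 2) \dots run simultaneously for this finite family'' is precisely what is neither proven nor available: part 2) of Theorem \ref{intro:thm1} produces one point $\uu$ for one subspace $V$ and for the \emph{specific small} tolerance $\delta=1/\max\{4n,24(n-m)\}$, whereas your reduction needs a single $\uu$ working simultaneously for several lines $\bR\ua_J$ with a tolerance large enough to swallow a whole face of the cone $\uu^\perp\cap\{x_2,\dots,x_n\ge 0\}$ (a face of dimension $\ge 2$ is not contained in any small angular neighbourhood of a line). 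For such large tolerances the finiteness statement you invoke is essentially as strong as the corollary itself, so the reduction is circular unless that strengthened, simultaneous construction is actually carried out; in addition, sorting the coordinates into those $\asymp\norm{\ux}$ and the rest is not a finite decomposition with uniform constants (coordinates of intermediate size are not covered), so even the geometric reduction to finitely many fixed angular neighbourhoods is incomplete. As written, the second half is a plan, not a proof.

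Moreover, your premise that the second assertion ``is not a formal consequence of part 2)'' is mistaken, and this is exactly where the paper takes a different and much shorter route: a unimodular change of variables. With $\xbar=x_2+\cdots+x_n$ and $k$ a positive integer with $3/k\le\tdelta$, the map $T(\ux)=(x_1,\,x_3+(k+1)\xbar,\,x_3+k\xbar,\dots,x_n+k\xbar)$ lies in $\GL_n(\bZ)$ and sends every real point with $x_2,\dots,x_n>0$ into the region $\dist(\,\cdot\,,V)\le\tdelta$ (the image of the full\textendash dimensional positive cone sits inside the full\textendash dimensional tube around $V$, so there is no dimensional obstruction of the kind you describe). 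Applying part 2) with the auxiliary function $t\mapsto\psi(t^{1/2})^{1/2}$, which absorbs the bounded distortion $\norm{\ux}\le\norm{T(\ux)}\le n(k+1)\norm{\ux}$, one gets $\tuu$, and setting $\uu=\trT(\tuu)$ gives $\ux\cdot\uu=T(\ux)\cdot\tuu$ with $T(\ux)\in\bZ^n$; every solution of \eqref{intro:cor1:eq2} of large norm then yields a solution of the constrained inequalities for $\tuu$, of which there are only finitely many. To repair your proposal you would need either to supply a reduction of this kind or to genuinely construct the point $\uu$ handling your finite family of lines with large angular tolerance, which Theorem \ref{intro:thm1} does not provide.
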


For $n=3$, the first part of the corollary is the original result of Schmidt
\cite[Theorem 1]{Sc1976} from 1976.
Later, in \cite[Section 5]{Sc1983}, Schmidt conjectured that, in that case,
one could replace $\gamma$ in \eqref{intro:cor1:eq1} by any number smaller than $2$.
This was disproved in 2012 by Moshchevitin who showed, by an ingenious construction
in \cite{Mo2012}, that it cannot be replaced by a number larger than the largest real
root of $x^4-2x^2-4x+1$ which is approximately $1.947$.  Motivated by this, the
second author proved in \cite[Corollary]{Ro2014} that $\rho=\gamma$ is best possible
in the wider context of Theorem \ref{intro:thm1}, part 2), for $m=1$ and $n=3$.
Earlier,  in \cite[Remark F]{Sc1976}, Schmidt
had observed that, for $n\ge 3$, the exponent $\gamma$ cannot be replaced by a number
larger than $2$.

In the general case, we will see that the choice of
\begin{equation}
\label{intro:choixV:eq2}
 V=\{(x_1,\dots,x_n)\in\bR^n\,;\, x_{m+2}=\cdots=x_n=0\},
\end{equation}
yields the following statement.

\begin{cor}
\label{intro:cor2}
Let $m,n$ be integers with $1\le m\le n-2$ and let $\rho=\rho_m$ as in Theorem
\ref{intro:thm1}.   For each point $\uu$ of $\bR^n$ with linearly independent
coordinates over $\bQ$ and each choice of $\delta, \epsilon>0$, there exists a non-zero
point $\ux=(x_1,\dots,x_n)$ of $\bZ^n$ which satisfies
\begin{equation}
 \label{intro:cor2:eq1}
 \max\{|x_{m+2}|,\dots,|x_n|\} \le \delta\max\{|x_2|,\dots,|x_n|\}
 \et
 \abs{\ux\cdot\uu}\le \epsilon\norm{\ux}^{-\rho}.
\end{equation}
On the other hand, for each unbounded monotonically increasing function
$\psi$ from $[1,\infty)$ to $(0,\infty)$,
there exists a point $\uu$ of $\bR^n$ with linearly independent
coordinates over $\bQ$ for which at most finitely many points
$\ux=(x_1,\dots,x_n)$ of $\bZ^n$ satisfy
\begin{equation}
 \label{intro:cor2:eq2}
 \max\{|x_1|,\dots,|x_{m+1}|\} = \max\{|x_1|,\dots,|x_n|\}
 \et
 \abs{\ux\cdot\uu}\le \psi(\norm{\ux})^{-1}\norm{\ux}^{-\rho}.
\end{equation}
\end{cor}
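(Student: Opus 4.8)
The plan is to derive both parts of Corollary~\ref{intro:cor2} from Theorem~\ref{intro:thm1} applied to the $(m+1)$-dimensional subspace $V$ of \eqref{intro:choixV:eq2}, for which
\[
 \dist(\ux,V)=\frac{\bigl(x_{m+2}^2+\cdots+x_n^2\bigr)^{1/2}}{\norm{\ux}}
 \qquad\bigl(\ux=(x_1,\dots,x_n)\neq 0\bigr).
\]
Thus $\dist(\ux,V)\le\delta'$ forces $\max\{|x_{m+2}|,\dots,|x_n|\}\le\delta'\norm{\ux}$, while $\max\{|x_{m+2}|,\dots,|x_n|\}\le\delta''\max\{|x_1|,\dots,|x_{m+1}|\}$ forces $\dist(\ux,V)\le\delta''\sqrt{n-m-1}$; the whole argument consists in going back and forth between estimates of these two kinds.

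For the first part, given $\uu$ with linearly independent coordinates over $\bQ$ and $\delta,\epsilon>0$, I would invoke part~1) of Theorem~\ref{intro:thm1} for this $V$ with parameters $\epsilon'=\min\{\epsilon,|u_1|/2\}$ and a suitably small $\delta'>0$, obtaining a non-zero $\ux\in\bZ^n$ with $\dist(\ux,V)\le\delta'$ and $|\ux\cdot\uu|\le\epsilon'\norm{\ux}^{-\rho}$. The one subtlety is that $x_1$ could dominate, so that $\norm{\ux}$ need not be comparable with $\max\{|x_2|,\dots,|x_n|\}$; this is precisely where the approximation condition is used. Since $\norm{\ux}\ge 1$, the choice $\epsilon'\le|u_1|/2$ prevents $\ux$ from being a multiple of $\ue_1$, so $\max\{|x_2|,\dots,|x_n|\}\ge 1$; then $|u_1|\,|x_1|\le|\ux\cdot\uu|+\sum_{i\ge 2}|u_i|\,|x_i|$ yields $\norm{\ux}\le C\max\{|x_2|,\dots,|x_n|\}$ for a constant $C=C(n,\uu)$. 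Choosing $\delta'=\delta/C$ turns $\dist(\ux,V)\le\delta'$ into the first inequality of \eqref{intro:cor2:eq1}, while $|\ux\cdot\uu|\le\epsilon'\norm{\ux}^{-\rho}\le\epsilon\norm{\ux}^{-\rho}$ is the second.

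The second part is the delicate one. The constraint in \eqref{intro:cor2:eq2} --- that the sup-norm of $\ux$ be attained among its first $m+1$ coordinates --- describes a \emph{wide} cone, not contained in any set $\{\dist(\cdot,V)\le\delta\}$, so part~2) of Theorem~\ref{intro:thm1} cannot be quoted directly; I expect this to be the main obstacle. I would get past it with the diagonal integer dilation
\[
 g=\mathrm{diag}\bigl(\,\underbrace{N,\dots,N}_{m+1}\,,\,\underbrace{1,\dots,1}_{n-m-1}\,\bigr),
 \qquad N\in\bZ,\quad N\ge\sqrt{n-m-1}\,\max\{4n,24(n-m)\}.
\]
Then $g\bZ^n\subseteq\bZ^n$ and $\norm{\ux}\le\norm{g\ux}\le N\norm{\ux}$, and $g$ compresses the cone of \eqref{intro:cor2:eq2} into $\{\dist(\cdot,V)\le\delta\}$ for the particular value $\delta=1/\max\{4n,24(n-m)\}$ of part~2) of Theorem~\ref{intro:thm1}: if $\max\{|x_1|,\dots,|x_{m+1}|\}=\max\{|x_1|,\dots,|x_n|\}$ and $\uy:=g\ux$, then $\max_{i\ge m+2}|y_i|=\max_{i\ge m+2}|x_i|\le\max_{i\le m+1}|x_i|=N^{-1}\max_{i\le m+1}|y_i|$, whence $\dist(\uy,V)\le N^{-1}\sqrt{n-m-1}\le\delta$.

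To conclude, I would apply part~2) of Theorem~\ref{intro:thm1} to $V$ and to the rescaled function $\psi'$ with $\psi'(t)=N^{-\rho}\psi(t/N)$ for $t\ge N$, extended to a monotone unbounded function on $[1,\infty)$, obtaining $\uw$ with linearly independent coordinates over $\bQ$ for which only finitely many $\uy\in\bZ^n$ satisfy $\dist(\uy,V)\le\delta$ and $|\uy\cdot\uw|\le\psi'(\norm{\uy})^{-1}\norm{\uy}^{-\rho}$. I would then set $\uu:=g\uw$, which still has linearly independent coordinates over $\bQ$ because $g$ is invertible over $\bQ$, and which satisfies $\ux\cdot\uu=(g\ux)\cdot\uw$ because $g$ is symmetric. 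For $\ux$ satisfying \eqref{intro:cor2:eq2} with $\norm{\ux}\ge N$ (the finitely many $\ux$ with $\norm{\ux}<N$ being irrelevant), one has $\uy=g\ux$ with $N^{-1}\norm{\uy}\le\norm{\ux}$, so by monotonicity of $\psi$,
\[
 |\uy\cdot\uw|=|\ux\cdot\uu|\le\psi(\norm{\ux})^{-1}\norm{\ux}^{-\rho}
 \le N^{\rho}\,\psi\bigl(N^{-1}\norm{\uy}\bigr)^{-1}\norm{\uy}^{-\rho}
 =\psi'(\norm{\uy})^{-1}\norm{\uy}^{-\rho},
\]
so $g\ux$ lies in the finite exceptional set attached to $\uw$; since $g$ is injective, only finitely many such $\ux$ can occur, which gives the second assertion. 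The points needing care are the inequality $N^{-1}\sqrt{n-m-1}\le\delta$, which fixes how large $N$ must be, the admissibility of $\psi'$ in Theorem~\ref{intro:thm1}, and the elementary comparisons of norms under $g$.
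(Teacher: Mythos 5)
Your proposal is correct and follows essentially the same route as the paper: part 1) by using $u_1\neq 0$ and the smallness of $\abs{\ux\cdot\uu}$ to get $\norm{\ux}\le C\max\{|x_2|,\dots,|x_n|\}$ and then applying Theorem \ref{intro:thm1} with $\delta'=\delta/C$, and part 2) by the diagonal map multiplying the first $m+1$ coordinates by a large integer, which compresses the wide cone into $\{\dist(\cdot,V)\le\delta\}$ and transports $\uu$ via the transpose. The only difference is bookkeeping: you absorb the dilation constant into an explicitly rescaled function $\psi'$, whereas the paper applies its Theorem with $t\mapsto\psi(t^{1/2})^{1/2}$ and lets the unboundedness of $\psi$ swallow the constant for large $\norm{\ux}$.
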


For $n=m+2$ and any $m\ge 1$, the first part of the corollary was established
by Thurnheer in 1990 \cite[Theorem 1 (b)]{Th1990}, along with
other interesting results.  For $n=3$ and $m=1$, this result is equivalent to that
of Schmidt mentioned above.  At the level of exponents, we deduce the following
statement.

\begin{cor}
\label{intro:cor3}
Let $m,n$ be integers with $1\le m\le n-2$.  For each point $\uu\in\bR^n$ with
$\bQ$-linearly independent coordinates, let $\rho(\uu)$ denote the supremum of all
$\rho$ for which the inequality $\abs{\ux\cdot\uu} \le \norm{\ux}^{-\rho}$ has
infinitely many solutions $\ux=(x_1,\dots,x_n)\in\bZ^n$ with
\begin{equation}
 \label{intro:cor3:eq}
 \max\{|x_{m+2}|,\dots,|x_n|\} < \max\{|x_2|,\dots,|x_{m+1}|\}.
\end{equation}
Then, we have $\rho(\uu)\ge \rho_m$ for all those $\uu$, and the equality
$\rho(\uu)=\rho_m$ for uncountably many unit vectors $\uu$ among them.
\end{cor}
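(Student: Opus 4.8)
The plan is to deduce both assertions directly from Corollary~\ref{intro:cor2}: its first part gives $\rho(\uu)\ge\rho_m$ for \emph{every} admissible $\uu$, and its second part supplies the vectors with $\rho(\uu)\le\rho_m$.

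For the lower bound I would apply part~1) of Corollary~\ref{intro:cor2} with a fixed $\delta<1$, say $\delta=1/2$, letting $\epsilon$ run through a sequence tending to $0$. For each $\epsilon$ this yields a non-zero $\ux=(x_1,\dots,x_n)\in\bZ^n$ satisfying \eqref{intro:cor2:eq1}. Once $\epsilon<|u_1|$ the coordinates $x_2,\dots,x_n$ cannot all vanish (otherwise $\ux=(x_1,0,\dots,0)$ with $x_1\ne0$ and $|x_1u_1|\le\epsilon|x_1|^{-\rho_m}$ would force $|u_1|\le\epsilon$), so $M:=\max\{|x_2|,\dots,|x_n|\}>0$; then $\max\{|x_{m+2}|,\dots,|x_n|\}\le M/2<M$ forces $\max\{|x_2|,\dots,|x_{m+1}|\}=M$, which is exactly \eqref{intro:cor3:eq}. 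Since $\ux\cdot\uu\ne0$ for all these points, their norms must be unbounded (otherwise they would take finitely many values and $\abs{\ux\cdot\uu}$ would be bounded away from $0$, contradicting $\abs{\ux\cdot\uu}\le\epsilon$). Taking also $\epsilon\le1$, I obtain infinitely many $\ux\in\bZ^n$ satisfying \eqref{intro:cor3:eq} together with $\abs{\ux\cdot\uu}\le\norm{\ux}^{-\rho_m}$, whence $\rho(\uu)\ge\rho_m$.

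For the upper bound I would first record that \eqref{intro:cor3:eq} implies the constraint in \eqref{intro:cor2:eq2}: from $\max\{|x_{m+2}|,\dots,|x_n|\}<\max\{|x_2|,\dots,|x_{m+1}|\}\le\max\{|x_1|,\dots,|x_{m+1}|\}$ one gets $\max\{|x_1|,\dots,|x_n|\}=\max\{|x_1|,\dots,|x_{m+1}|\}$. Next I would fix once and for all one unbounded monotonically increasing function $\psi\colon[1,\infty)\to(0,\infty)$ growing slowly enough that $\psi(t)=o(t^{\eta})$ for every $\eta>0$, for instance $\psi(t)=1+\log t$, and let $\uu$ be a point furnished by part~2) of Corollary~\ref{intro:cor2} for this $\psi$. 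If $\rho(\uu)>\rho_m$, there is some $\rho''>\rho_m$ for which $\abs{\ux\cdot\uu}\le\norm{\ux}^{-\rho''}$ has infinitely many solutions $\ux$ obeying \eqref{intro:cor3:eq}; writing $\eta=\rho''-\rho_m>0$, all such $\ux$ of large enough norm satisfy $\psi(\norm{\ux})\le\norm{\ux}^{\eta}$, hence $\abs{\ux\cdot\uu}\le\norm{\ux}^{-\rho''}\le\psi(\norm{\ux})^{-1}\norm{\ux}^{-\rho_m}$, and by the previous remark they satisfy \eqref{intro:cor2:eq2} — contradicting the defining property of $\uu$. Hence $\rho(\uu)=\rho_m$.

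The remaining point — and the one place where the black-box statement of Corollary~\ref{intro:cor2} does not suffice — is to obtain \emph{uncountably many} such $\uu$. For this I would revisit the construction behind part~2) of Corollary~\ref{intro:cor2}, equivalently Theorem~\ref{intro:thm1}, part~2): being a parametric geometry of numbers construction, it builds $\uu$ through an inductive choice of real parameters constrained only by strict inequalities (one may, for instance, keep one coordinate free over an interval), and so it already produces a family of admissible $\uu$ of cardinality the continuum, all with $\bQ$-linearly independent coordinates. Since $\abs{\ux\cdot(\lambda\uu)}\le\norm{\ux}^{-\rho}$ is equivalent to $\abs{\ux\cdot\uu}\le|\lambda|^{-1}\norm{\ux}^{-\rho}$, a short argument gives $\rho(\lambda\uu)=\rho(\uu)$ for all $\lambda\ne0$, so we may normalize these vectors to unit length without changing $\rho$, and since the construction yields uncountably many pairwise non-proportional $\uu$ this produces uncountably many unit vectors with $\rho(\uu)=\rho_m$. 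I expect the bulk of the work to lie in this last extraction of a continuum of examples from the construction, the rest being the elementary bookkeeping above.
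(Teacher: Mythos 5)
Your treatment of the two easier assertions is correct and essentially the paper's: part 1) of Corollary \ref{intro:cor2} with $\delta=1/2$ and $\epsilon\to 0$ gives $\rho(\uu)\ge\rho_m$ for every admissible $\uu$ (your bookkeeping that $\max\{|x_2|,\dots,|x_n|\}>0$ and that the norms are unbounded is fine), and part 2) with a fixed slowly growing $\psi$ such as $1+\log t$, together with the observation that \eqref{intro:cor3:eq} implies the constraint in \eqref{intro:cor2:eq2} and that $\rho(\lambda\uu)=\rho(\uu)$, gives one unit vector with $\rho(\uu)=\rho_m$.

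The uncountability step, however, is a genuine gap, and it is precisely the part of the statement that carries the content. You assert that the construction behind Theorem \ref{intro:thm1}, part 2) ``already produces a family of admissible $\uu$ of cardinality the continuum'' because it involves inductive choices of real parameters constrained by strict inequalities; but a continuum of parameter choices does not by itself yield a continuum of outputs --- the map from choices to the limit vector $\uu$ could a priori collapse uncountably many choices onto countably many points, and nothing in your sketch rules this out. (Your concrete suggestion of ``keeping one coordinate free over an interval'' does not reflect how $\uu$ is actually produced there: it arises as a limit of unit normals to lattice hyperplanes, not by freely prescribing a coordinate.) To make your route work you would have to prove an injectivity statement, e.g.\ vary the growth of the parameters $(X_i)$ so that the associated systems differ unboundedly pairwise and then argue via $\norm{\uL_\uu-\uP}_\infty\le c$ that distinct systems force distinct $\uu$; none of this is in the proposal. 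Note also that your claim that the black-box statement of Corollary \ref{intro:cor2} ``does not suffice'' is mistaken: the paper gets uncountability from the corollary alone by a diagonal argument. Given any sequence $(\uu_i)_{i\ge1}$ in the set $\cS$ of unit vectors with $\rho(\uu)=\rho_m$, one sets
\begin{equation*}
 \psi_i(t)=\max\{\,|\ux\cdot\uu_i|^{-1}\norm{\ux}^{-\rho_m}\,;\ \ux\in\cX,\ \norm{\ux}\le t\,\},
\end{equation*}
which is unbounded by the first part of Corollary \ref{intro:cor2}; one then builds a single monotone unbounded $\psi$ with $\psi(t)\le\sqrt{\psi_i(t)}$ for all large $t$ (for each $i$), applies part 2) with this $\psi$, normalizes the resulting point, and checks, using integer points realizing the maxima defining $\psi_i$, that the new unit vector differs from every $\uu_i$. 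Hence no countable sequence exhausts $\cS$. You should either adopt this diagonalization or supply the missing injectivity argument inside the construction.
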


In \cite[Theorem 1]{BK2009}, Bugeaud and Kristensen showed that $\rho(\uu)=n-1$
for almost all $\uu$ with respect to the Lebesgue measure, and that each number $\rho\ge m+1$
is equal to $\rho(\uu)$ for uncountably many $\uu$ with first coordinate equal to $1$
($\rho(\uu)$ is a projective invariant).  They further asked two problems
about the existence of points $\uu$ with $\bQ$-linearly independent coordinates
satisfying $\rho(\uu)<m+1$.   Since $\rho_m<m+1$,
the above corollary shows that $\rho_m$ is the smallest such value, answering
positively their first problem and negatively the other.

The 2021 MSc thesis of the first author establishes Theorem \ref{intro:thm1} for $m=1$ and
any $n\ge 3$.  Part 1) is proved as \cite[Theorem 4.2.4]{Ch2021}, and part 2) as
\cite[Theorem 4.3.1]{Ch2021}.  For part 1), it is reasonable that the exponent
$\rho=\gamma$ which works for $n=3$ also works for any $n\ge 3$ as one expects
more freedom in the choice of the integer points $\ux$.  Indeed this is how part 1)
is proved there, by reduction to the case $n=3$ due to Schmidt.   However, it is surprising
that this exponent remains best possible, independently of $n$.   The construction that
shows its optimality and proves part 2) of the theorem generalizes the construction
elaborated in \cite{Ro2014}, and exhibits additional features (see
\cite[Lemma 4.3.10]{Ch2021}).

In Section \ref{sec:red}, we prove part 1) of Theorem \ref{intro:thm1} for the
general case $n\ge m+2$ by reducing to Thurnheer's result in the special case where
$n=m+2$.  We also provide proofs of the three corollaries assuming that the full
theorem holds.  In Section \ref{sec:simplif}, we propose a simplification of the proof
of Thurnheer from \cite{Th1990} along the lines
of \cite[Chapter 5]{Ch2021}.  It involves a reasoning which is
reminiscent of the theory of continued fractions and remarkably similar to that of
Davenport and Schmidt in \cite{DS1968} although our goal is different.  Moreover, it
allows us to treat at once the cases $m=1$ and $m\ge 2$ which were analyzed separately
by Thurnheer in \cite{Th1990}.  Preliminaries on the projective distance are gathered
in Section \ref{sec:dist}.

To prove part 2) of the theorem in the general case, we introduce a new construction in
parametric geometry of numbers which involves angular constraints.
As this requires additional discussion, we describe it in the next section.

%
%

\section{Parametric geometry of numbers with angular constraints}
\label{sec:param}

Fix an integer $n\ge 2$.
Parametric geometry of numbers, as developed in \cite{SS2013}
and \cite{Ro2015}, studies in logarithmic scale the successive minima of
the parametric families of compact symmetric convex subsets of $\bR^n$
\begin{equation}
 \label{param:eq:C}
 \cC_\uu(Q)=\{\ux\in\bR^n\,;\,\norm{\ux}\le 1
    \text{ and } |\ux\cdot\uu|\le Q^{-1}\norm{\uu} \}
 \quad (Q\ge 1)
\end{equation}
attached to non-zero points $\uu\in\bR^n$ (in the setting of \cite{Ro2015}).  Thus, its
object of study consists of the maps
\begin{equation*}
 \label{param:eq:L}
 \begin{array}{rcl}
 \uL_\uu\colon[0,\infty) &\longrightarrow &\bR^n\\
  q &\longmapsto &(L_{\uu,1}(q),\dots,L_{\uu,n}(q))
 \end{array}
\end{equation*}
where, for each $j=1,\dots,n$ and $q\ge 0$, the number $L_{\uu,j}(q)$
represents the logarithm of the $j$-th minimum of $\cC_\uu(e^q)$, namely the smallest
real number $L$ for which $e^L\cC_\uu(e^q)$ contains at least $j$ linearly
independent points of $\bZ^n$.  The main result of the theory asserts that, modulo
bounded functions, the set of these maps coincides with a simpler set of functions
called $n$-systems (or $(n,0)$-systems) whose definition is purely combinatorial
(see \cite[Section 2.5]{Ro2015}).  We can even use smaller sets of functions,
the rigid $n$-systems of a given mesh $c>0$.

To recall their definition from \cite[Section 1]{Ro2015}, let
\begin{equation*}
\label{param:eq:Delta}
 \Delta_n=\{(x_1,\dots,x_n)\in\bR^n\,;\,x_1\le\cdots\le x_n\}
\end{equation*}
denote the set of $n$-tuples of real numbers in monotone increasing order and let
$\Phi\colon\bR^n\to\Delta_n$ denote the continuous map which sends an $n$-tuple
to its permutation in $\Delta_n$.   The following definitions are adapted from
\cite[Definitions 1.1 and 1.2]{Ro2015}.

\begin{definition}
\label{param:def:canvas}
Let $c>0$, and let $s\in\{\infty,1,2,3,\dots\}$.  A \emph{canvas with mesh $c$ and cardinality
$s$} in $\bR^n$ is a triple consisting of a sequence of points $(\ua^{(i)})_{0\le i<s}$ in
$\Delta_n$ together with sequences of integers $(k_i)_{0\le i<s}$ and $(\ell_i)_{0\le i<s}$
of the same cardinality $s$ such that, for each index $i$ with $0\le i<s$,
\begin{itemize}
 \item[(C1)] the coordinates $(a^{(i)}_1,\dots,a^{(i)}_n)$ of $\ua^{(i)}$ form a strictly
  increasing sequence of positive integer multiples of $c$;\\[-5pt]
 \item[(C2)] we have $1\le k_0 < \ell_0=n$ and $1 \le k_i < \ell_i \le n$ if $i\ge 1$,\\[-5pt]
 \item[(C3)] if $i+1<s$, we further have $k_i\le \ell_{i+1}$,
  $a^{(i)}_{\ell_{i+1}}+c \le a^{(i+1)}_{\ell_{i+1}}$ and
 \begin{equation}
 \label{param:def:canvas:eq}
  (a^{(i+1)}_1,\dots,\widehat{a^{(i+1)}_{\ell_{i+1}}},\dots,a^{(i+1)}_n)
  = (a^{(i)}_1,\dots,\widehat{a^{(i)}_{k_{i}}},\dots,a^{(i)}_n)
 \end{equation}
 where the hat on a coordinate means that it is omitted.
\end{itemize}
\end{definition}

The only difference with \cite[Definition 1.1]{Ro2015} is the strict inequality $k_0<\ell_0$
in condition (C2), which is better suited to our current purposes.

Condition (C3) means that, when $i+1<s$, the point $\ua^{(i+1)}$ is obtained
from the preceding point $\ua^{(i)}$ by replacing one of its coordinates by a larger
multiple of $c$, different from its other coordinates, and then by reordering the new
$n$-tuple.

\begin{definition}
 \label{param:def:systems}
To each canvas of mesh $c>0$ as in Definition \ref{param:def:canvas},
we associate the function $\uP\colon[q_0,\infty)\to\Delta_n$ given by
\begin{equation*}
\label{param:def:systems:eq}
 \uP(q)
 =\Phi_n\big(a^{(i)}_1,\dots,\widehat{a^{(i)}_{k_i}},\dots,a^{(i)}_n,
         a^{(i)}_{k_i}+q-q_i\big)
 \quad
 (0\le i<s, \ q_i\le q<q_{i+1}),
\end{equation*}
where $q_i=a_1^{(i)}+\cdots+a_n^{(i)}$ $(0\le i<s)$ and $q_s=\infty$
if $s<\infty$.  We say that such a function is a \emph{rigid
$n$-system with mesh $c$} and that $(q_i)_{0\le i<s}$ is its
sequence of \emph{switch numbers}.
\end{definition}

Since $a^{(i)}_{k_i}+q_{i+1}-q_i=a^{(i+1)}_{\ell_{i+1}}$ when $i+1<s$,
such a map $\uP$ is continuous.  Moreover, upon writing
$\uP(q)=(P_1(q),\dots,P_n(q))$ for each $q\ge q_0$, we see that
\begin{itemize}
 \item[(S1)] $P_1,\dots,P_n$ are continuous and piecewise linear on $[q_0,\infty)$
  with slopes $0$ and $1$;\\[-5pt]
 \item[(S2)] $0\le P_1(q)\le\cdots\le P_n(q)$ and $P_1(q)+\cdots+P_n(q)=q$ for
  each $q\ge q_0$;\\[-5pt]
 \item[(S3)]  if, for some $j\in\{1,\dots,n-1\}$, the sum  $P_1+\cdots+P_j$ changes
  slope from $1$ to $0$ at a point $q>q_0$, then $P_j(q)=P_{j+1}(q)$.
\end{itemize}
Thus $\uP$ is an $(n,0)$-system in view of \cite[Section 2.5]{Ro2015}.  The switch
numbers $q_i$ with $i>0$ may also be characterized as the points $q>q_0$ at which
one of the sums $P_1+\cdots+P_j$ changes slope from $0$ to $1$.  Then,
the smallest such index $j$ is $k_i$.  From a graphical point of view,
it also follows from the
definition that, whenever $0\le i<s$, the union of the graphs of $P_1,\dots,P_n$ over
$[q_i,q_{i+1})$ consists of $n-1$ horizontal line segments and one line segment
of slope $1$.  When $i\ge 1$, the condition $k_i<\ell_i$ in (C2), means that the line
segment of slope $1$ over $[q_{i-1},q_i]$ ends at $(q_i,a^{(i)}_{\ell_i})$, above
the starting point $(q_i,a^{(i)}_{k_i})$ of the line segment of slope $1$ over
$[q_i,q_{i+1})$.

Since the convex bodies $\cC_\uu(q)$ depends only on the class of $\uu$ in projective
space, we may restrict to unit vectors $\uu$.  The main result \cite[Theorem 1.3]{Ro2015}
then reads as follows.

\begin{theorem}
 \label{param:thm:R}
Let $c>0$.
For each unit vector $\uu$ of $\bR^n$, there exists a rigid
system $\uP\colon[q_0,\infty)\to\Delta_n$ with mesh
$c$ such that $\uL_\uu-\uP$ is bounded on
$[q_0,\infty)$.  Conversely, for each rigid system
$\uP\colon[q_0,\infty)\to\Delta_n$ with mesh $c$,
there exists a unit vector $\uu$ in $\bR^n$
such that $\uL_\uu-\uP$ is bounded on $[q_0,\infty)$.
\end{theorem}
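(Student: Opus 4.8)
The plan is to establish the two implications of the theorem separately. The direct one — that $\uL_\uu$ lies within bounded distance of \emph{some} rigid $n$-system of mesh $c$ — will follow by combining the known ``shape'' of the successive minima functions with an elementary combinatorial discretization. The converse — realizing a \emph{prescribed} rigid $n$-system by a concrete unit vector — is the substantive part, and is obtained through an inductive construction of integer points.

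\emph{Direct implication.} Let $\uu$ be a unit vector. Since $\vol\,\cC_\uu(Q)\asymp Q^{-1}$ for $Q\ge 1$, with implied constants depending only on $n$, Minkowski's second convex body theorem yields $L_{\uu,1}(q)+\cdots+L_{\uu,n}(q)=q+O(1)$. The inclusions $e^{-t}\cC_\uu(e^q)\subseteq\cC_\uu(e^{q+t})\subseteq\cC_\uu(e^q)$ for $t\ge 0$ show that each $L_{\uu,j}$ is non-decreasing and $1$-Lipschitz, with $L_{\uu,1}\ge 0$; these give properties (S1)--(S2) in approximate form, and the work of Schmidt and Summerer \cite{SS2013} shows that in fact $\uL_\uu$ lies within $O(1)$ of a genuine $(n,0)$-system $\uP^*$. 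It then remains to prove a purely combinatorial lemma: every $(n,0)$-system is within distance $O(c)$ of a rigid $n$-system of mesh $c$. For this I would read off from $\uP^*$ its switch points and the ``corner'' tuples occurring there, round each coordinate of each corner to a nearby multiple of $c$, processing the coordinates in increasing order, and adjust the choices so as to preserve the strict monotonicity (C1) and the compatibility relations (C2)--(C3) with the previous corner; one then checks that the cumulative rounding error stays $O(c)$ at every coordinate and that the switch numbers $q_i=a_1^{(i)}+\cdots+a_n^{(i)}$ are preserved up to $O(c)$. Composing the two steps gives the direct implication.

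\emph{Converse implication.} Given a canvas of mesh $c$ with associated rigid system $\uP$ and switch numbers $(q_i)$, I would build $\uu$ as a limit of an inductive construction of primitive integer vectors. The idea is to produce a ``chain'' $\uv^{(0)},\uv^{(1)},\uv^{(2)},\dots$ in $\bZ^n$, together with nested sublattices, so that at each switch parameter $q_i$ the first few minima of $\cC_\uu(e^{q_i})$ are realized by a prescribed subcollection of these vectors whose logarithmic lengths match the coordinates of $\ua^{(i)}$. Passing from stage $i$ to stage $i+1$ implements the transition (C3): the coordinate with index $k_i$ is the one allowed to grow, and a new integer vector is recruited to sit at the top, at height $a^{(i+1)}_{\ell_{i+1}}$. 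The point $\uu$ is then pinned down as the limit of a shrinking sequence of rational balls dictated by these choices; a general-position argument at each stage keeps its coordinates in the desired configuration (and, if one wished, $\bQ$-linearly independent), and a final check confirms that no unforeseen integer point enters the bodies $\cC_\uu(e^q)$, so that $\uL_\uu$ tracks $\uP$ within $O(1)$ on all of $[q_0,\infty)$.

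\emph{Main obstacle.} The hard part is the converse, and within it the uniform bookkeeping that guarantees the minima of the limit point $\uu$ are exactly those predicted by $\uP$ up to a bounded error: one must rule out ``spurious'' short integer combinations of the many previously chosen vectors and control the approximation error uniformly in $i$ as the construction runs to infinity. This calls for an inductive hypothesis phrased in terms of a reduced basis adapted to $\ua^{(i)}$ together with quantitative estimates for how adjoining one new vector perturbs the entire minima function. By contrast, the discretization step is elementary once the rounding is done in the right order, and the shape estimates for $\uL_\uu$ are standard geometry of numbers.
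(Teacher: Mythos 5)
You should first note that the paper does not prove this statement at all: it is quoted verbatim from \cite[Theorem 1.3]{Ro2015}, and the new contribution of the present paper (Theorem \ref{param:thm}, proved in Section \ref{sec:cons}) is a simpler construction of $\uu$ valid only for restricted classes of rigid systems, with angular constraints added. So the comparison has to be with \cite{Ro2015} and with the paper's Proposition \ref{cons:prop1}, which reproduces the driving principle of that construction. Measured against these, your proposal is an outline of the right strategy, but both of its decisive steps are left as gaps. For the direct implication, \cite{SS2013} does not give what you claim: Schmidt and Summerer prove that $\uL_\uu$ is within bounded distance of an $(n,\gamma)$-system for some $\gamma=\gamma_n>0$, not of an $(n,0)$-system. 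The passage from $(n,\gamma)$- to $(n,0)$-systems, and then to rigid systems of a prescribed mesh $c$, is precisely the content of the cited theorem and occupies a substantial part of \cite{Ro2015}; your ``round the corners to multiples of $c$ and check the errors stay $O(c)$'' lemma is stated, not proved, and the compatibility constraints (C2)--(C3) make this rounding genuinely delicate (one must keep the replaced coordinate strictly increasing and distinct from the others at every switch, uniformly in $i$). As written, this half is close to circular.

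For the converse, which you correctly identify as the substantive half, your sketch stops exactly where the difficulty begins. You appeal to a ``general-position argument'' and a ``final check that no unforeseen integer point enters the bodies $\cC_\uu(e^q)$,'' but no mechanism is offered, and general position is not the mechanism that works. The actual argument (Proposition \ref{cons:prop1} here, following \cite[Section 5]{Ro2015}) constructs coherent bases $\uux^{(i)}$ of $\bZ^n$ whose vectors have prescribed norms $\asymp\exp(a_j^{(i)})$ and are almost orthogonal (or, in this paper, satisfy $\dist(\ux_j^{(i)},\uv_j^{(i)})\le\delta$); it then takes the hyperplanes $U_i$ spanned by all but one basis vector, shows via Lemma \ref{dist:lemma:UU} that their unit normals satisfy $\dist(\uu_i,\uu_j)\ll\theta^{-2}\exp(-q_{i+1})$, and defines $\uu$ as the limit. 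This quantitative convergence gives only \emph{upper} bounds on the trajectories $L_\uu(\ux_j^{(i)},q)$, i.e.\ $\uL_\uu(\uux^{(i)},q)\le\uP(q)+c_1$; the matching \emph{lower} bound for all integer points --- which is exactly what rules out spurious short vectors --- is not obtained by controlling the construction further, but by applying Minkowski's second convex body theorem to $\cC_\uu(e^q)$, which forces $L_{\uu,1}(q)+\cdots+L_{\uu,n}(q)\ge q-\log(n!)$ and hence pins each minimum within $c_2$ of $P_j(q)$. Without this (or an equivalent device), your inductive bookkeeping cannot ``control the approximation error uniformly in $i$,'' and the limit point could well admit unexpected small values of $|\ux\cdot\uu|$. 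So the proposal identifies the correct obstacle but does not overcome it; supplying the almost-orthogonality (or angular-constraint) hypothesis, the hyperplane-normal convergence estimate, and the Minkowski argument is what turns the outline into a proof.
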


Here, we propose a new and simpler construction of the unit vector $\uu$ which
involves angular constraints and yields a simple self-contained proof of the second part of
Theorem \ref{intro:thm1}.  It assumes that the $n$-system $\uP$ satisfies
some additional properties but, even with this restriction, we will see that it suffices
for some important applications of the theory.  To state our result, we need some additional notation.

\begin{definition}
\label{param:def:traj}
Let $\uu\in\bR^n$ be a unit vector. The \emph{trajectory} of a non-zero
integer point $\ux\in\bZ^n$ (relative to the family $\cC_\uu(Q)$) is the
map $L_\uu(\ux,\cdot)\colon [0,\infty)\to\bR$ given by
\begin{equation}
\label{param:eq:Lx}
 L_\uu(\ux,q)=\max\{\log\norm{\ux},q+\log\abs{\ux\cdot\uu}\} \quad (q\ge 0).
\end{equation}
The \emph{trajectory} of a linearly independent $n$-tuple
$\uux=(\ux_1,\dots,\ux_n)$ of points of $\bZ^n$ is the
map $\uL_\uu(\uux,\cdot)\colon [0,\infty)\to\Delta_n$ given, for each $q\ge 0$, by
\begin{equation*}
\label{param:eq:Lux}
 \uL_\uu(\uux,q)
   =(L_{\uu,1}(\uux,q),\dots,L_{\uu,n}(\uux,q))
  :=\Phi_n(L_\uu(\ux_1,q),\dots,L_\uu(\ux_n,q)),
\end{equation*}
so that the coordinates $L_{\uu,j}(\uux,q)$ of $\uL_\uu(\uux,q)$ are the numbers
$L_\uu(\ux_j,q)$ written in monotone increasing order.
\end{definition}

Fix a number $q\ge 0$.  It follows from the formula \eqref{param:eq:C} for
$\cC_\uu(Q)$ that, for each non-zero $\ux\in\bZ^n$, the number
$L_\uu(\ux,q)$ given by \eqref{param:eq:Lx} is the smallest $L$ for
which $\ux\in e^L\cC_\uu(e^q)$.  Thus, we have $L_{\uu,1}(q)\le L_\uu(\ux,q)$.
More generally, we deduce that
\begin{equation}
 \label{param:eq:LL}
 L_{\uu,j}(q)\le L_{\uu,j}(\uux,q) \quad \text{for $j=1,\dots,n$,}
\end{equation}
for each linearly independent $n$-tuple $\uux$ of points of $\bZ^n$.
Since $\cC_\uu(e^q)$ is a compact subset of $\bR^n$, we can even find such
an $\uux$ that realizes the $n$ minima of $\cC_\uu(e^q)$ in the sense that
$\uL_\uu(q)=\uL_\uu(\uux,q)$.  Thus, in the componentwise ordering on $\bR^n$
defined by
\[
 (t_1,\dots,t_n) \le (t'_1,\dots,t'_n)
 \ssi t_1 \le t'_1, \ \dots\  , t_n\le t'_n,
\]
the $n$-tuple $\uL_\uu(q)$ is the minimum of $\uL_\uu(\uux,q)$ over all possible
choices of $\uux$.

\begin{definition}
 \label{param:def:bases}
Let $\uP\colon[q_0,\infty)\to\Delta_n$ be a rigid $n$-system with mesh $c$
as in Definition \ref{param:def:systems}.  Consider its associated canvas as in
Definition \ref{param:def:canvas}, and let $(q_i)_{0\le i<s}$ denote its sequence of
switch numbers.  We say that a sequence of bases $(\uux^{(i)})_{0\le i <s}$ of $\bZ^n$
written
$\uux^{(i)}=(\ux^{(i)}_1,\dots,\ux^{(i)}_n)$
is \emph{coherent with} $\uP$ if, for each $i\ge 0$ with $i+1<s$, we have
\begin{itemize}
 \item[(1)]
 $(\ux^{(i+1)}_1,\dots,\widehat{\ux^{(i+1)}_{\ell_{i+1}}},\dots,\ux^{(i+1)}_n)
   = (\ux^{(i)}_1,\dots,\widehat{\ux^{(i)}_{k_i}\,},\dots,\ux^{(i)}_n)$,\\[-5pt]
 \item[(2)]
 $\ux_{\ell_{i+1}}^{(i+1)}
  \in \ux_{k_i}^{(i)} +\langle \ux_1^{(i)},\dots,\widehat{\ux_{k_i}^{(i)}\,},\dots,\ux^{(i)}_{\ell_{i+1}} \rangle_\bZ$.
\end{itemize}
\end{definition}

Note that, if $\uux^{(i)}$ is a basis of $\bZ^n$ for some $i$ with $1\le i+1<s$, then
any choice of vectors $\ux^{(i+1)}_1,\dots,\ux^{(i+1)}_n$ satisfying conditions
(1) and (2) also form a basis of $\bZ^n$.  So, provided that $\uux^{(0)}$ is a basis
of $\bZ^n$, all subsequent $n$-tuples satisfying these conditions also form
bases of $\bZ^n$.

As for the terminology, we say that $(\uux^{(i)})_{0\le i <s}$ is coherent with $\uP$
because condition (1) in Definition \ref{param:def:bases} has the same form as
condition \eqref{param:def:canvas:eq} in (C3).  This proves very useful for induction
purposes.  Similarly, we have the following notion of a coherent system of directions
for $\uP$.

\begin{definition}
 \label{param:def:directions}
Let the notation be as in Definition \ref{param:def:bases},
and let $\uuv=(\uv_1,\dots,\uv_{n-1})$ be a linearly
independent $(n-1)$-tuple of unit vectors of $\bR^n$.
Set $\uuv^{(0)}=(\uv_1,\dots,\uv_{n-1},\uv_{k_0})$ and, for each
integer $i\ge 0$ with $i+1<s$, define recursively
$\uuv^{(i+1)}=(\uv^{(i+1)}_1,\dots,\uv^{(i+1)}_n)$ by the conditions
\begin{itemize}
 \item[(1)]
 $(\uv^{(i+1)}_1,\dots,\widehat{\uv^{(i+1)}_{\ell_{i+1}}},\dots,\uv^{(i+1)}_n)
   = (\uv^{(i)}_1,\dots,\widehat{\uv^{(i)}_{k_i}\,},\dots,\uv^{(i)}_n)$,\\[-5pt]
 \item[(2)]
 $\uv_{\ell_{i+1}}^{(i+1)} = \uv_{k_{i+1}}^{(i+1)}$.
\end{itemize}
We say that $(\uuv^{(i)})_{0\le i<s}$ is the \emph{coherent sequence of directions
for $\uP$ attached to $\uuv$}.
\end{definition}

Note that the definition of $\uuv^{(0)}$ uses the strict inequality $k_0<\ell_0=n$ in
condition (C2).  Moreover, relations (1) and (2) uniquely determine  $\uuv^{(i+1)}$
in terms of $\uuv^{(i)}$ for $i\ge 0$, because $k_{i+1}<\ell_{i+1}$ by
condition (C2).  Thus, for each integer $i\ge 0$ with
$i<s$, we have $\uv_{\ell_i}^{(i)} = \uv_{k_i}^{(i)}$ and the $(n-1)$-tuples
$(\uv^{(i)}_1,\dots,\widehat{\uv^{(i)}_{k_i}\,},\dots,\uv^{(i)}_n)$ and
$(\uv^{(i)}_1,\dots,\widehat{\uv^{(i)}_{\ell_i}\,},\dots,\uv^{(i)}_n)$
are permutations of $\uuv=(\uv_1,\dots,\uv_{n-1})$.

In the statement of our second main result below, we use the standard
Euclidean norm on the exterior powers of $\bR^n$ (recalled in Section \ref{sec:dist})
to define a constant $\theta$, and we denote by $(\ue_1,\dots,\ue_n)$ the
canonical basis of $\bR^n$.

\begin{theorem}
\label{param:thm}
Let $\uuv=(\uv_1,\dots,\uv_{n-1})$ be a linearly independent $(n-1)$-tuple of unit
vectors of $\bR^n$, and let $\delta\in\bR$ with
\begin{equation*}
 \label{param:thm:eq:c}
 0<\delta\le \frac{\theta}{4n}
\quad\text{where}\quad
 \theta=\norm{\uv_1\wedge\cdots\wedge\uv_{n-1}}.
\end{equation*}
There is a constant $\kappa>0$ depending only on $\uuv$ and $\delta$ with the following
property.  Let $\uP=(P_1,\dots,P_n)\colon [q_0,\infty)\to\Delta_n$ be a rigid
$n$-system of mesh $c>0$ with associated canvas $(\ua^{(i)})_{0\le i<s}$,
$(k_i)_{0\le i <s}$, $(\ell_i)_{0\le i <s}$ and sequence of switch numbers
$(q_i)_{0\le i<s}$ as in Definitions \ref{param:def:canvas} and \ref{param:def:systems}.
Suppose that $a^{(0)}_1=P_1(q_0)\ge \kappa$ and that at least one
of the following conditions holds
\begin{flalign}
 &\quad c\ge \log(8/\delta) \et  \ell_i=n \text{ for each integer $i$ with $0\le i<s$;} &&
 \label{param:thm:cond1}\\
 &\quad c\ge \log(2) \et \sum_{i=1}^s \exp(q_{i-1}-q_i) < \delta/4. &&
 \label{param:thm:cond2}
\end{flalign}
Then, for the coherent sequence of directions $(\uuv^{(i)})_{i\ge 0}$ attached
to $\uuv$, there is a unit vector $\uu$ of $\bR^n$ and a
coherent sequence of bases $(\uux^{(i)})_{i\ge 0}$ of $\bZ^n$ such that,
for each index $i$ with $0\le i < s$, each $j=1,\dots,n$, and each $q\in[q_i,q_{i+1})$,
we have
\begin{flalign}
 &\quad \dist(\ux_j^{(i)},\uv_j^{(i)}) \le \delta, &&
 \label{param:thm:eq1}\\
 &\quad \big|\log\norm{\ux_j^{(i)}} - a_j^{(i)}\big| \le \log(2), &&
 \label{param:thm:eq2}\\
 &\quad \big|\log|\ux_{k_i}^{(i)}\cdot\uu| - a_{k_i}^{(i)} + q_i \big| \le c_2
 \quad \text{if} \quad q_{i+1} > q_i+\log(2)+c_2, &&
 \label{param:thm:eq3}\\
 &\quad L_{\uu,j}(q) \le L_{\uu,j}(\uux^{(i)},q)
   \le P_j(q) + c_1 \le L_{\uu,j}(q)+c_2. &&
 \label{param:thm:eq4}
\end{flalign}
where $c_1=\log(32/\theta^2)$ and $c_2=nc_1+\log(n!)$.  When $\uuv=(\ue_1,\dots,\ue_{n-1})$, we have $\theta=1$ and one can take $\kappa=\log(6/\delta)$.
\end{theorem}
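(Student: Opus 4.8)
\emph{Overall strategy and base step.} The plan is to build the coherent sequence of bases $(\uux^{(i)})_{0\le i<s}$ of $\bZ^n$ by induction on $i$, following the combinatorial recipe of the canvas, and to take for $\uu$ the limit of the sequence of unit vectors $\uu^{(i)}$ orthogonal to the hyperplanes $\langle\ux^{(i)}_1,\dots,\widehat{\ux^{(i)}_{k_i}\,},\dots,\ux^{(i)}_n\rangle_\bR$ (with $\uu=\uu^{(s-1)}$ when $s<\infty$). For the base step one produces $\uux^{(0)}$ with $\dist(\ux^{(0)}_j,\uv^{(0)}_j)\le\delta$ and $\bigl|\log\norm{\ux^{(0)}_j}-a^{(0)}_j\bigr|\le\log 2$ for each $j$. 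Since $(\uv^{(0)}_j)_{j\ne n}=(\uv_1,\dots,\uv_{n-1})$ is linearly independent with $\norm{\uv_1\wedge\cdots\wedge\uv_{n-1}}=\theta$, while $\uv^{(0)}_n=\uv_{k_0}$ merely repeats the direction $\uv_{k_0}$ at the higher scale $a^{(0)}_n>a^{(0)}_{k_0}$, this is a geometry-of-numbers construction reminiscent of continued fractions: one chooses integer points inside the cones of half-angle $\arcsin\delta$ about the rays $\bR_{>0}\uv^{(0)}_j$, with norms in the windows $[e^{a^{(0)}_j}/2,\,2e^{a^{(0)}_j}]$, exploiting the room in the directions transverse to $\uv^{(0)}_j$ (of size comparable to $\delta e^{a^{(0)}_j}$, not to $1$) to arrange that the lattice they generate is all of $\bZ^n$. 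The hypothesis $a^{(0)}_1\ge\kappa$ guarantees that these windows are wide enough for such a choice to exist, and an explicit count gives $\kappa=\log(6/\delta)$ when $\uuv=(\ue_1,\dots,\ue_{n-1})$ and $\theta=1$.

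\emph{Inductive step.} Suppose $\uux^{(i)}$ and $\uu^{(i)}$ are built with the required estimates at level $i$. Condition (1) of Definition \ref{param:def:bases} forces $\ux^{(i+1)}_j$ to equal the matching $\ux^{(i)}_j$ for $j\ne\ell_{i+1}$, so the only freedom is the choice of $\ux^{(i+1)}_{\ell_{i+1}}\in\ux^{(i)}_{k_i}+\langle\ux^{(i)}_1,\dots,\widehat{\ux^{(i)}_{k_i}\,},\dots,\ux^{(i)}_{\ell_{i+1}}\rangle_\bZ$, which must have norm in $[e^{a^{(i+1)}_{\ell_{i+1}}}/2,\,2e^{a^{(i+1)}_{\ell_{i+1}}}]$ and lie within $\delta$ of $\uv^{(i+1)}_{\ell_{i+1}}$. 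The key point, which is the entire purpose of Definition \ref{param:def:directions}, is that $\uv^{(i+1)}_{\ell_{i+1}}=\uv^{(i+1)}_{k_{i+1}}$ coincides with the direction $\uv^{(i)}_p$ of one of the \emph{available} generators $\ux^{(i)}_p$ (with $p\ne k_i$ and $p\le\ell_{i+1}$), and the angular constraints for the other, recycled vectors hold automatically by the coherence of the two sequences; a geometry-of-numbers argument in the affine lattice $\ux^{(i)}_{k_i}+\langle\ux^{(i)}_1,\dots,\widehat{\ux^{(i)}_{k_i}\,},\dots,\ux^{(i)}_{\ell_{i+1}}\rangle_\bZ$ then produces the desired $\ux^{(i+1)}_{\ell_{i+1}}$, the available room transverse to $\uv^{(i)}_p$ being controlled from below by $\theta$. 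One takes $\uu^{(i+1)}$ to be the unit normal to $\langle\ux^{(i+1)}_j : j\ne k_{i+1}\rangle_\bR$ agreeing in orientation with $\uu^{(i)}$; since $\ux^{(i+1)}_{\ell_{i+1}}-\ux^{(i)}_{k_i}$ lies in the hyperplane normal to $\uu^{(i)}$, the exterior-power estimates of Section \ref{sec:dist} give $\abs{\ux^{(i)}_{k_i}\cdot\uu^{(i)}}\asymp\theta^{-1}e^{a^{(i)}_{k_i}-q_i}$ (whence \eqref{param:thm:eq3}) together with $\norm{\uu^{(i+1)}-\uu^{(i)}}\asymp\theta^{-1}e^{-q_{i+1}}$. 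Each of the hypotheses \eqref{param:thm:cond1} and \eqref{param:thm:cond2} forces $\sum_{m\ge i}e^{-q_{m+1}}\ll e^{-q_{i+1}}$ uniformly in $i$ — the first because consecutive switch numbers differ by at least $c\ge\log(8/\delta)$, the second by the direct hypothesis $\sum_i e^{q_{i-1}-q_i}<\delta/4$ — so that $\uu=\lim_i\uu^{(i)}$ exists and $\norm{\uu-\uu^{(i)}}$ is small enough for \eqref{param:thm:eq1}, \eqref{param:thm:eq2} and \eqref{param:thm:eq3} to survive with $\uu$ in place of $\uu^{(i)}$.

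\emph{The minima estimates.} It remains to establish \eqref{param:thm:eq4}. Its left inequality is \eqref{param:eq:LL}. For the middle inequality $L_{\uu,j}(\uux^{(i)},q)\le P_j(q)+c_1$ one feeds the norm bounds \eqref{param:thm:eq2}, the bound \eqref{param:thm:eq3} for the active vector $\ux^{(i)}_{k_i}$, and the now minuscule dot products $\ux^{(i)}_j\cdot\uu$ ($j\ne k_i$) into the definition \eqref{param:eq:Lx} of the trajectories, and compares with the explicit formula for $\uP$ on $[q_i,q_{i+1})$ from Definition \ref{param:def:systems}, up to the reordering $\Phi_n$, the constant $c_1=\log(32/\theta^2)$ absorbing the covolume factors carrying $\theta$. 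The right inequality $P_j(q)+c_1\le L_{\uu,j}(q)+c_2$ is the compactness half: Minkowski's second theorem applied to $\cC_\uu(e^q)$, whose volume is $\asymp e^{-q}$, gives $\sum_m L_{\uu,m}(q)\ge q-\log(n!)=\sum_m P_m(q)-\log(n!)$, so that subtracting the middle inequality for all $m\ne j$ yields $L_{\uu,j}(q)\ge P_j(q)-(n-1)c_1-\log(n!)$, which is exactly the assertion with $c_2=nc_1+\log(n!)$.

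\emph{Main obstacle.} The crux is the inductive step, and more precisely the compatibility of the affine-hyperplane constraint on $\ux^{(i+1)}_{\ell_{i+1}}$ with the prescribed norm window and the angular requirement, \emph{without accumulation of error}: the new vector is assembled from vectors $\ux^{(i)}_j$ that are themselves only within $\delta$ of their nominal directions, the large integer multiples needed to reach the correct scale can amplify those discrepancies, and a single $\uu$ must be chosen compatibly with all the dot-product constraints at once across the possibly infinitely many steps. Conditions \eqref{param:thm:cond1} and \eqref{param:thm:cond2} single out precisely the two regimes in which this propagation can be closed — the grown coordinate always returns to the top position $n$ with a generous mesh, or the switch numbers are forced far apart — and verifying that the errors remain summable in each regime is the technical heart of the proof.
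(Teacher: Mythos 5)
Your overall architecture is the same as the paper's (Section \ref{sec:cons}): build the coherent bases recursively along the canvas, let $\uu_i$ be the unit normal to $\langle\ux^{(i)}_j : j\ne k_i\rangle_\bR$, take $\uu$ as the limit (or $\uu_{s-1}$ if $s<\infty$), get the trajectory bounds from orthogonality plus $\dist(\uu_i,\uu)\ll\theta^{-2}e^{-q_{i+1}}$, and close \eqref{param:thm:eq4} with Minkowski's second theorem exactly as in Proposition \ref{cons:prop1}. But the two constructive steps that constitute the actual content are only asserted, and one of them, as you describe it, would fail. First, the base step: choosing one integer point per cone with the right norm does not by itself give a \emph{basis of $\bZ^n$} (unimodularity is the issue, not linear independence); the paper gets it from the explicit triangular construction of Lemma \ref{cons:lemma1}, seeded by Lemma \ref{red:lemma1}, which is where $\kappa$ and the value $\log(6/\delta)$ for the standard frame actually come from. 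A lattice-point count in cones is not a substitute without an argument producing a unimodular system.

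Second, and more seriously, your description of the inductive step — add to $\ux^{(i)}_{k_i}$ a suitable element of the sublattice so as to land near the recycled direction $\uv^{(i)}_p=\uv^{(i+1)}_{\ell_{i+1}}$, and then check that "the errors remain summable in each regime" — is only how condition \eqref{param:thm:cond2} works (Proposition \ref{cons:prop:cas2}, where the per-step loss is $2e^{q_{t-1}-q_t}$ and the hypothesis makes the total $<\delta/4$). Under condition \eqref{param:thm:cond1} alone the per-step loss of this construction is only bounded by $2e^{-c}\le\delta/4$, and over infinitely many steps this is \emph{not} summable: the accumulated angular error would exceed $\delta$ after finitely many steps, so the scheme you outline breaks down precisely in the regime \eqref{param:thm:cond1} is meant to cover. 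The paper avoids accumulation there by a different mechanism (Proposition \ref{cons:prop:cas1}): since $\ell_i=n$ always, the new vector is aimed afresh at $\tfrac32 A\uv$, where $\uv\in U_{t-1}$ is closest to the \emph{original} target $\uv^{(t)}_n$; the uniform bound $\dist(\uv^{(t)}_n,U_{t-1})\le 3\delta/4$ comes from $\dist(U_{-1},U_{t-1})\le 8\theta^{-2}e^{-q_0}\le\delta/4$ (this is where the hypothesis $a^{(0)}_1\ge\kappa$ is used), and the mesh $c\ge\log(8/\delta)$ guarantees that rounding the real coefficients of \emph{all} available generators perturbs the target by at most $(\delta/4)A$. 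So each step achieves $\dist(\ux^{(t)}_n,\uv^{(t)}_n)\le\delta$ outright, with no propagation of earlier errors. Supplying this mechanism (or an equivalent one) is the missing idea; without it the proposal does not prove the theorem under \eqref{param:thm:cond1}. Your derivations of \eqref{param:thm:eq3} (via the exact identity $|\ux^{(i)}_{k_i}\cdot\uu_i|=\norm{\ux^{(i)}_1\wedge\cdots\wedge\widehat{\ux^{(i)}_{k_i}}\wedge\cdots\wedge\ux^{(i)}_n}^{-1}$ and the long-interval hypothesis) and of the right-hand inequality in \eqref{param:thm:eq4} are fine and essentially the paper's.
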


For $0\le i < s$ and $q\in[q_i,q_{i+1})$, the inequalities \eqref{param:thm:eq4}
imply that, in some order which depends on $q$, the basis vectors
$\ux_1^{(i)},\dots,\ux_n^{(i)}$ realize the successive minima of $\cC_\uu(e^q)$
up to the factor $\exp(c_2)$, while, by \eqref{param:thm:eq1}, each of them
stands at distance of at most $\delta$ from a vector among $\uv_1,\dots,\uv_{n-1}$.
Since \eqref{param:thm:eq4} holds independently of the choice of $i$ and $q$,
it also implies that
\begin{equation}
  \norm{\uP(q)-\uL_\uu(q)}_\infty \le c_2
 \quad \text{for each $q\ge q_0$.}
 \label{param:thm:eq5}
\end{equation}
In particular, if $P_1$ is unbounded, then $L_{\uu,1}$ is also unbounded and thus
the coordinates of $\uu$ must be linearly independent over $\bQ$.

As we will see in Section \ref{sec:cons}, the construction of the point $\uu$ is particularly
simple for a rigid $n$-system that meets condition \eqref{param:thm:cond2}.
When $s=\infty$, this
requires that the series $\sum_{i=1}^\infty \exp(q_{i-1}-q_i)$ is bounded and
thus that the difference $q_i-q_{i-1}$ tends to infinity with $i$.  Although this is restrictive,
it holds for the important class of \emph{self-similar rigid $n$-systems},
namely the rigid $n$-systems $\uP\colon[q_0,\infty)\to\Delta_n$ which, for some
$\rho>1$, satisfy
\[
 \uP(\rho q)=\rho\uP(q) \quad\text{for each $q\ge q_0$.}
\]
If such $\uP$ has mesh $c$, then, for each $h\ge 0$, the restriction of
$\uP$ to the interval $[\rho^hq_0,\infty)$ is self-similar and rigid of mesh
$\rho^hc$. Then, for $h$
large enough, this restriction fulfills conditions \eqref{param:thm:cond2}.

In \cite{Ro2017}, it is shown that the spectrum of the standard exponents of Diophantine
approximation can be computed in terms of $n$-systems only, and that the self-similar
rigid $n$-systems yield a dense subset of that spectrum.  The simplicity of the construction
of points attached to such $n$-systems could possibly help in estimating
the Hausdorff dimension of sets of points whose exponents
lie in a given region of the spectrum.

The construction of the point $\uu$ is more delicate for a rigid $n$-system that satisfies
condition \eqref{param:thm:cond1}.  This is also done in Section \ref{sec:cons}.
Although this condition restricts the shape of the $n$-system,
it is well adapted to our purpose.  In Section \ref{sec:app}, we apply this result
to specific rigid $n$-systems that satisfy condition \eqref{param:thm:cond1}
to produce the points $\uu$ that are needed in the second part of Theorem \ref{intro:thm1}.

%
%

\section{Projective distance}
\label{sec:dist}

Fix an integer $n\ge 2$.  We view $\bR^n$ as an Euclidean space for the usual
scalar product, denoted by a dot, so that the canonical basis $(\ue_1,\dots,\ue_n)$
of $\bR^n$ is orthonormal.  More generally, for each integer $k$ with $1\le k\le n$,
we endow $\bigwedge^k\bR^n$ with the unique structure of Euclidean space for
which the products $\ue_{i_1}\wedge\cdots\wedge\ue_{i_k}$ with
$1\le i_1<\cdots<i_k\le n$ form an orthonormal basis of that space, and we
denote by $\norm{\ualpha}$ the Euclidean norm of a vector $\ualpha$ in that
space.  Then the well-known Hadamard inequality tells us that
\[
 \norm{\ux_1\wedge\cdots\wedge\ux_k}\le \norm{\ux_1}\cdots\norm{\ux_k}
\]
for any choice of vectors $\ux_1,\dots,\ux_k\in\bR^n$.

We define the \emph{projective distance} between two non-zero points $\ux$, $\uy$
of $\bR^n$ by
\[
 \dist(\ux,\uy)=\frac{\norm{\ux\wedge\uy}}{\norm{\ux}\,\norm{\uy}}.
\]
It depends only on the classes of $\ux$ and $\uy$ in the projective space
over $\bR^n$ and represents the sine of the acute angle
between the lines spanned by these vectors.
In particular, it is a symmetric function of $\ux$ and $\uy$.  Moreover, it is
well-known that it satisfies the triangle inequality
\[
 \dist(\ux,\uz)\le \dist(\ux,\uy)+\dist(\uy,\uz)
\]
for any non-zero points $\ux$, $\uy$, $\uz$ of $\bR^n$
\cite[Section 8, Equation (3)]{Sc1967}.  The following property will also be useful.

\begin{lemma}
\label{dist:lemma1}
Let $\uu$, $\uu'$ be unit vectors of $\bR^n$ with $\uu\cdot\uu'\ge 0$.  Then we
have
\begin{equation*}
  \label{dist:lemma1:eq1}
 \dist(\uu,\uu') \le \norm{\uu-\uu'} \le 2\dist(\uu,\uu').
\end{equation*}
Moreover, any point $\ux\in\uu^\perp$ satisfies
\begin{equation}
\label{dist:lemma1:eq2}
 \abs{\ux\cdot\uu'}\le 2\norm{\ux}\dist(\uu,\uu').
\end{equation}
\end{lemma}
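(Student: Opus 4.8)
This is a short geometric lemma about unit vectors making an acute angle, so I would aim for a direct computation. Write $\alpha$ for the acute angle between $\uu$ and $\uu'$, so that $\uu\cdot\uu'=\cos\alpha\ge 0$ and, by the definition of the projective distance, $\dist(\uu,\uu')=\sin\alpha$ since both vectors are unit vectors.

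\emph{The two-sided estimate on $\norm{\uu-\uu'}$.} First I would expand $\norm{\uu-\uu'}^2=\norm{\uu}^2+\norm{\uu'}^2-2\,\uu\cdot\uu'=2(1-\cos\alpha)$. Using the half-angle identities $1-\cos\alpha=2\sin^2(\alpha/2)$ and $\sin\alpha=2\sin(\alpha/2)\cos(\alpha/2)$, this gives $\norm{\uu-\uu'}=2\sin(\alpha/2)$. Since $0\le\alpha\le\pi/2$, we have $\alpha/2\in[0,\pi/4]$, hence $\cos(\alpha/2)\in[1/\sqrt2,1]\subseteq[1/2,1]$, so that
\[
 \sin\alpha = 2\sin(\alpha/2)\cos(\alpha/2) \le 2\sin(\alpha/2) = \norm{\uu-\uu'},
\]
which is the left inequality, and
\[
 \norm{\uu-\uu'} = 2\sin(\alpha/2) = \frac{\sin\alpha}{\cos(\alpha/2)} \le 2\sin\alpha = 2\dist(\uu,\uu'),
\]
which is the right inequality. (One can avoid trigonometry entirely: from $\norm{\uu-\uu'}^2=2(1-\cos\alpha)$ and $\dist(\uu,\uu')^2=1-\cos^2\alpha=(1-\cos\alpha)(1+\cos\alpha)$, the ratio $\norm{\uu-\uu'}^2/\dist(\uu,\uu')^2 = 2/(1+\cos\alpha)$ lies in $[1,2]$ precisely because $\cos\alpha\in[0,1]$; this is cleaner and I would probably present it this way.)

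\emph{The estimate \eqref{dist:lemma1:eq2}.} For $\ux\in\uu^\perp$ we have $\ux\cdot\uu=0$, so $\ux\cdot\uu'=\ux\cdot(\uu'-\uu)$, and by Cauchy--Schwarz $\abs{\ux\cdot\uu'}\le\norm{\ux}\,\norm{\uu'-\uu}\le 2\norm{\ux}\dist(\uu,\uu')$ by the inequality just proved. If $\ux=0$ the claim is trivial. I expect no real obstacle here: the only mild subtlety is the hypothesis $\uu\cdot\uu'\ge0$, which is exactly what forces $\alpha\le\pi/2$ (equivalently $\cos\alpha\ge0$) and hence makes the constants $1$ and $2$ in the displayed chain correct; without it the right-hand inequality could fail as $\alpha\to\pi$. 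The whole proof is a few lines of elementary estimates.
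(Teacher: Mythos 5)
Your proof is correct and follows essentially the same route as the paper: the two-sided bound reduces to $\sin\alpha\le 2\sin(\alpha/2)\le 2\sin\alpha$ for the acute angle $\alpha$ (which is exactly how the paper phrases it), and the bound \eqref{dist:lemma1:eq2} is obtained by writing $\ux\cdot\uu'=\ux\cdot(\uu'-\uu)$ and applying Cauchy--Schwarz, just as in the paper. Your purely algebraic variant via $\norm{\uu-\uu'}^2/\dist(\uu,\uu')^2=2/(1+\cos\alpha)$ is a pleasant alternative, but it is only a cosmetic difference.
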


\begin{proof}
The first estimate is well-known and amounts to $\sin(\theta) \le 2\sin(\theta/2)
\le 2\sin(\theta)$ where $\theta\in[0,\pi/2]$ is the angle between $\uu$ and $\uu'$.
Then \eqref{dist:lemma1:eq2} follows since, for $\ux\in\uu^\perp$, we find
$\abs{\ux\cdot\uu'}=\abs{\ux\cdot(\uu'-\uu)}\le \norm{\ux}\norm{\uu-\uu'}$
by Hadamard's inequality.
\end{proof}

We define the projective distance from a non-zero point $\ux$ of
$\bR^n$ to a non-zero subspace $V$ of $\bR^n$ as the infimum of the
distances between $\ux$ and a non-zero point $\uy$ of $V$.  As explained in
\cite[Section 4]{Ro2015}, this infimum, denoted $\dist(\ux,V)$, is in fact
a minimum, and \cite[Lemma 4.2]{Ro2015} provides the following formula
where $\proj_{V^\perp}$ denotes the orthogonal projection on the orthogonal
complement $V^\perp$ of $V$ in $\bR^n$.

\begin{lemma}
\label{dist:lemma:xV}
For any non-zero point $\ux$ of $\bR^n$ and any non-zero subspace $V$
of $\bR^n$, we have
\[
 \dist(\ux,V) = \frac{ \norm{\proj_{V^\perp}(\ux)} }{ \norm{\ux} }
 = \frac{ \norm{\ux\wedge\uv_1\wedge\cdots\wedge\uv_m} }%
  { \norm{\ux}\,\norm{\uv_1\wedge\cdots\wedge\uv_m} },
\]
where $(\uv_1,\dots,\uv_m)$ is any basis of $V$ over $\bR$.
\end{lemma}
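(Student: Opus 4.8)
\textbf{Proof plan for Lemma \ref{dist:lemma:xV}.}
The plan is to establish the two claimed equalities separately, both reducing
to elementary linear algebra in the Euclidean space $\bR^n$ and its exterior
powers. First I would prove the equality
$\dist(\ux,V) = \norm{\proj_{V^\perp}(\ux)}/\norm{\ux}$. Write
$\ux = \ux' + \ux''$ with $\ux' = \proj_V(\ux) \in V$ and
$\ux'' = \proj_{V^\perp}(\ux) \in V^\perp$, so that these two components are
orthogonal. For any non-zero $\uy \in V$, decompose similarly and use
$\norm{\ux\wedge\uy}^2 = \norm{\ux}^2\norm{\uy}^2 - (\ux\cdot\uy)^2$ (the
Lagrange/Cauchy--Binet identity in $\bigwedge^2$); since $\uy \in V$ we have
$\ux\cdot\uy = \ux'\cdot\uy$, giving
$\norm{\ux\wedge\uy}^2 = \norm{\ux}^2\norm{\uy}^2 - (\ux'\cdot\uy)^2
\ge \norm{\ux}^2\norm{\uy}^2 - \norm{\ux'}^2\norm{\uy}^2
= \norm{\ux''}^2\norm{\uy}^2$ by Cauchy--Schwarz and the Pythagorean relation
$\norm{\ux}^2 = \norm{\ux'}^2 + \norm{\ux''}^2$. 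Dividing by
$\norm{\ux}^2\norm{\uy}^2$ shows $\dist(\ux,\uy) \ge \norm{\ux''}/\norm{\ux}$
for every such $\uy$. Equality is attained by taking $\uy = \ux'$ when
$\ux' \ne 0$: then $\ux'\cdot\uy = \norm{\ux'}^2 = \norm{\uy}^2$, so the chain
above becomes an equality; if $\ux' = 0$ then $\ux = \ux'' \in V^\perp$ and any
$\uy \in V$ gives $\dist(\ux,\uy) = 1 = \norm{\ux''}/\norm{\ux}$. This proves
the first formula and confirms that the infimum is a minimum.

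Next I would prove the second equality,
$\norm{\proj_{V^\perp}(\ux)}/\norm{\ux}
= \norm{\ux\wedge\uv_1\wedge\cdots\wedge\uv_m}
/(\norm{\ux}\,\norm{\uv_1\wedge\cdots\wedge\uv_m})$, which after clearing the
common factor $\norm{\ux}^{-1}$ amounts to
$\norm{\ux\wedge\uv_1\wedge\cdots\wedge\uv_m}
= \norm{\proj_{V^\perp}(\ux)}\,\norm{\uv_1\wedge\cdots\wedge\uv_m}$ for any basis
$(\uv_1,\dots,\uv_m)$ of $V$. Here the key point is that the wedge
$\uv_1\wedge\cdots\wedge\uv_m$ is unchanged, up to a nonzero scalar, if we
replace the $\uv_i$ by any other basis of $V$, and the ratio in question is a
genuine invariant of $V$; so I may choose the most convenient basis. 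The cleanest
choice is an \emph{orthonormal} basis $(\uv_1,\dots,\uv_m)$ of $V$, so that
$\norm{\uv_1\wedge\cdots\wedge\uv_m} = 1$, and then extend it by an orthonormal
basis $(\uv_{m+1},\dots,\uv_n)$ of $V^\perp$. With respect to this orthonormal
basis of $\bR^n$, write $\ux = \sum_{i=1}^n x_i\uv_i$; then
$\ux\wedge\uv_1\wedge\cdots\wedge\uv_m
= \sum_{i=m+1}^n x_i\,\uv_i\wedge\uv_1\wedge\cdots\wedge\uv_m$, and the vectors
$\uv_i\wedge\uv_1\wedge\cdots\wedge\uv_m$ for $i = m+1,\dots,n$ are orthonormal
(being $\pm$ basis elements of $\bigwedge^{m+1}\bR^n$ built from an orthonormal
basis), whence $\norm{\ux\wedge\uv_1\wedge\cdots\wedge\uv_m}^2
= \sum_{i=m+1}^n x_i^2 = \norm{\proj_{V^\perp}(\ux)}^2$. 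This gives the desired
identity for the orthonormal basis; for a general basis, both
$\norm{\ux\wedge\uv_1\wedge\cdots\wedge\uv_m}$ and
$\norm{\uv_1\wedge\cdots\wedge\uv_m}$ scale by the same factor
$\abs{\det M}$ under a change-of-basis matrix $M$, so their ratio is unchanged,
completing the proof.

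The main obstacle, such as it is, is purely bookkeeping: one must be careful that
the Euclidean structure on $\bigwedge^{m+1}\bR^n$ fixed at the start of
Section \ref{sec:dist} (via the canonical orthonormal basis
$\ue_{i_1}\wedge\cdots\wedge\ue_{i_{m+1}}$) agrees with the one obtained from any
other orthonormal basis of $\bR^n$ — equivalently, that orthogonal transformations
of $\bR^n$ induce orthogonal transformations of $\bigwedge^{m+1}\bR^n$. This is
standard (the induced map sends one orthonormal basis of the exterior power to
another, up to sign), and can be invoked in a single sentence, or else one can
sidestep it entirely by noting that Lemma \ref{dist:lemma:xV} is essentially
restated from \cite[Lemma 4.2]{Ro2015} and only the first formula is needed in
what follows; but for a self-contained treatment the orthonormal-basis computation
above is the shortest route. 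Everything else is the two short Euclidean arguments
outlined, using only the Lagrange identity, Cauchy--Schwarz, and the Pythagorean
decomposition along $V \oplus V^\perp$.
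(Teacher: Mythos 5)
Your proof is correct and complete. The paper itself does not prove this lemma: it is stated as a direct citation of \cite[Lemma 4.2]{Ro2015}, so there is no in-paper argument to compare against. Your two steps are both sound: the first equality via the Lagrange identity $\norm{\ux\wedge\uy}^2=\norm{\ux}^2\norm{\uy}^2-(\ux\cdot\uy)^2$, Cauchy--Schwarz, and the Pythagorean decomposition $\ux=\ux'+\ux''$ along $V\oplus V^\perp$, with equality attained at $\uy=\ux'$ (and the degenerate case $\ux'=0$ handled separately); and the second equality via an orthonormal basis of $V$ extended to one of $\bR^n$, together with the observation that both the numerator and the denominator scale by $\abs{\det M}$ under a change of basis of $V$, so the ratio is basis-independent. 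The one point you flag — that an orthonormal basis of $\bR^n$ other than the canonical one still yields an orthonormal basis of $\bigwedge^{m+1}\bR^n$ — is indeed the only subtlety, and your one-sentence justification (orthogonal maps induce orthogonal maps on exterior powers) is the standard and correct way to dispatch it. In short, you have supplied a valid self-contained proof of a fact the paper imports by reference.
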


Finally, for any non-zero subspaces $V_1$ and $V_2$ of $\bR^n$, we define the
distance $\dist(V_1,V_2)$ from $V_1$ to $V_2$ as the supremum of the
numbers $\dist(\ux,V_2)$ with $\ux\in V_1\setminus\{0\}$.  As explained in
\cite[Section 4]{Ro2015}, this supremum is in fact a maximum.  It is not symmetric
in $V_1$ and $V_2$, for example when $V_1\varsubsetneq V_2$.  However, as
shown in \cite[Lemma 4.3]{Ro2015}, it has the following properties.

\begin{lemma}
\label{dist:lemma:xVV}
For any non-zero point $\ux$ of $\bR^n$ and any non-zero subspaces $V, V_1,V_2$
of $\bR^n$, we have
\[
 \dist(\ux,V_2) \le \dist(\ux,V_1)+\dist(V_1,V_2)
 \et
 \dist(V,V_2) \le \dist(V,V_1) + \dist(V_1,V_2).
\]
\end{lemma}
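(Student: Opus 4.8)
The plan is to deduce both inequalities from the triangle inequality for the projective distance between \emph{points}, together with the facts recalled just before the lemma that $\dist(\ux,V)$ is attained as a minimum over the non-zero points of $V$, and that $\dist(V_1,V_2)$ is attained as a maximum over the non-zero points of $V_1$.

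For the first inequality, I would begin by choosing a non-zero point $\uy\in V_1$ with $\dist(\ux,V_1)=\dist(\ux,\uy)$. Then, for an arbitrary non-zero point $\uz\in V_2$, the triangle inequality for points gives $\dist(\ux,\uz)\le\dist(\ux,\uy)+\dist(\uy,\uz)$. Taking the infimum over all such $\uz$ yields $\dist(\ux,V_2)\le\dist(\ux,\uy)+\dist(\uy,V_2)$. Since $\uy$ is a non-zero point of $V_1$, we have $\dist(\uy,V_2)\le\dist(V_1,V_2)$ by definition of the latter, and combining the two estimates gives $\dist(\ux,V_2)\le\dist(\ux,V_1)+\dist(V_1,V_2)$.

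For the second inequality, I would pick a non-zero point $\ux\in V$ at which the maximum defining $\dist(V,V_2)$ is attained, so that $\dist(V,V_2)=\dist(\ux,V_2)$. Applying the first inequality to this particular $\ux$ gives $\dist(\ux,V_2)\le\dist(\ux,V_1)+\dist(V_1,V_2)$, and since $\ux\in V\setminus\{0\}$ we also have $\dist(\ux,V_1)\le\dist(V,V_1)$. Hence $\dist(V,V_2)\le\dist(V,V_1)+\dist(V_1,V_2)$, as claimed.

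As for difficulty, there is no genuine obstacle here: the only point requiring care is the bookkeeping with the infimum and supremum, and this is already settled by the remarks preceding the lemma (alternatively one could work directly with $\inf$ and $\sup$ and use that $\inf(A+t)=\inf A+t$, so that attainment is not even needed). One could also prove the first inequality by a direct computation using the formula $\dist(\ux,V)=\norm{\proj_{V^\perp}(\ux)}/\norm{\ux}$ from Lemma~\ref{dist:lemma:xV}, but the argument through the point-triangle inequality is shorter and avoids any choice of bases.
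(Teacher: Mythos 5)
Your proof is correct: both inequalities follow exactly as you argue from the triangle inequality for the projective distance between points, the fact that $\dist(\ux,V_1)$ is attained at some non-zero $\uy\in V_1$, and the definition (and attainment) of $\dist(V_1,V_2)$ as a supremum over non-zero points of $V_1$. The paper itself offers no proof, simply citing \cite[Lemma 4.3]{Ro2015}, and your argument is essentially the standard one given there, so there is nothing to add.
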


In this paper, we mainly need estimates for subspaces of $\bR^n$ of
co-dimension $1$.  For these, \cite[Lemma 4.4]{Ro2015} gives the following
alternative formula which implies that, among themselves, the distance is symmetric.

\begin{lemma}
\label{dist:lemma:uU}
For each $j=1,2$, let $U_j$ be a subspace of $\bR^n$ of codimension $1$
and let $\uu_j$ be a unit vector of $U_j^\perp$.  Then we have
$\dist(U_1,U_2) = \dist(\uu_1,\uu_2)$.
\end{lemma}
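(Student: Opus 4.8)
The plan is to reduce everything to the codimension-$1$ case of Lemma \ref{dist:lemma:xV}. First I would observe that for a subspace $U$ of codimension $1$ with unit normal $\uu$, the orthogonal projection $\proj_{U^\perp}$ is exactly $\ux\mapsto(\ux\cdot\uu)\uu$, so Lemma \ref{dist:lemma:xV} gives the clean formula $\dist(\ux,U)=|\ux\cdot\uu|/\norm{\ux}$ for every non-zero $\ux$. Applying this with $U=U_2$, the quantity $\dist(U_1,U_2)$ becomes the supremum of $|\ux\cdot\uu_2|/\norm{\ux}$ as $\ux$ ranges over $U_1\setminus\{0\}$, i.e. the operator norm of the linear functional $\ux\mapsto\ux\cdot\uu_2$ restricted to the hyperplane $U_1$.

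Next I would compute that supremum explicitly. Decompose $\uu_2 = (\uu_2\cdot\uu_1)\uu_1 + \uw$ with $\uw\in U_1$ (the orthogonal projection of $\uu_2$ onto $U_1$); then for $\ux\in U_1$ we have $\ux\cdot\uu_2 = \ux\cdot\uw$, which is maximized in absolute value, relative to $\norm{\ux}$, when $\ux$ is parallel to $\uw$ (assuming $\uw\neq 0$), giving supremum $\norm{\uw}$. Since $\uu_2$ is a unit vector, $\norm{\uw}^2 = 1 - (\uu_1\cdot\uu_2)^2$, so $\dist(U_1,U_2) = \sqrt{1-(\uu_1\cdot\uu_2)^2}$. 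The degenerate case $\uw=0$ means $\uu_2=\pm\uu_1$, i.e. $U_1=U_2$, and then both sides are $0$, consistent with the formula. On the other hand, $\dist(\uu_1,\uu_2) = \norm{\uu_1\wedge\uu_2}/(\norm{\uu_1}\norm{\uu_2}) = \norm{\uu_1\wedge\uu_2}$, and the standard identity $\norm{\uu_1\wedge\uu_2}^2 = \norm{\uu_1}^2\norm{\uu_2}^2 - (\uu_1\cdot\uu_2)^2 = 1 - (\uu_1\cdot\uu_2)^2$ (which also follows from Lemma \ref{dist:lemma:xV} applied to the line $\bR\uu_1$) shows that $\dist(\uu_1,\uu_2) = \sqrt{1-(\uu_1\cdot\uu_2)^2}$ as well. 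Comparing the two expressions yields $\dist(U_1,U_2) = \dist(\uu_1,\uu_2)$, and this last quantity is manifestly symmetric in the two indices and independent of the choice of sign for each $\uu_j$.

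There is no real obstacle here; the only point requiring a word of care is the degenerate case $U_1 = U_2$ (equivalently $\uw = 0$), where the "maximize over the direction of $\uw$" step has no content — but then both sides vanish and the identity is trivially true, so it can be dispatched in one line. Alternatively, one can avoid the case split entirely by noting that $\dist(\ux,U_2)^2 = 1 - (\ux\cdot\uu_1)^2/\norm{\ux}^2$ for $\ux\in U_1$ is a continuous function whose supremum over the unit sphere of $U_1$ is attained, and evaluating it at the maximizer; I would present whichever version is shortest.
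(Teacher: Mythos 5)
Your argument is correct, and it is genuinely self-contained, whereas the paper does not prove this lemma at all: it simply quotes it from \cite[Lemma 4.4]{Ro2015}. Your route---reducing $\dist(U_1,U_2)$ via Lemma \ref{dist:lemma:xV} to $\sup_{\ux\in U_1\setminus\{0\}}|\ux\cdot\uu_2|/\norm{\ux}$, computing this supremum as $\norm{\proj_{U_1}(\uu_2)}=\sqrt{1-(\uu_1\cdot\uu_2)^2}$ by Cauchy--Schwarz, and matching it with $\dist(\uu_1,\uu_2)=\norm{\uu_1\wedge\uu_2}=\sqrt{1-(\uu_1\cdot\uu_2)^2}$ via the Lagrange identity---is the natural direct verification, and it has the merit of making the symmetry and the sign-independence in $\uu_1,\uu_2$ completely transparent without appealing to the external reference. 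One small slip: in your closing remark about avoiding the case split, the displayed formula $\dist(\ux,U_2)^2=1-(\ux\cdot\uu_1)^2/\norm{\ux}^2$ for $\ux\in U_1$ is wrong (its right-hand side is identically $1$ on $U_1$, since $\ux\cdot\uu_1=0$ there); the correct expression is $\dist(\ux,U_2)^2=(\ux\cdot\uu_2)^2/\norm{\ux}^2$, whose supremum over the unit sphere of $U_1$ is attained by compactness. This is only an optional variant, so it does not affect the validity of your main argument, but fix the formula if you keep that remark.
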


The next result provides an explicit formula for this distance in a situation
that we will encounter later.  It follows directly from \cite[Lemma 4.7]{Ro2015}
as the height of $\bR^n$ is $1$.

\begin{lemma}
\label{dist:lemma:UU}
Let $(\ux_1,\dots,\ux_n)$ be a basis of $\bZ^n$, and let $k$, $\ell$ be integers
with $1\le k<\ell\le n$.  For the subspaces
\[
 U_1=\langle\ux_1,\dots,\widehat{\ux_\ell\,},\dots,\ux_n\rangle_\bR
 \et
 U_2=\langle\ux_1,\dots,\widehat{\ux_k},\dots,\ux_n\rangle_\bR,
\]
we have
\[
 \dist(U_1,U_2)
  = \frac{\norm{\ux_1\wedge\cdots\wedge\widehat{\ux_k}
    \wedge\cdots\wedge\widehat{\ux_\ell\,}\wedge\cdots\wedge\ux_n}}%
   {\norm{\ux_1\wedge\cdots\wedge\widehat{\ux_k}\wedge\cdots\wedge\ux_n}\,
   \norm{\ux_1\wedge\cdots\wedge\widehat{\ux_\ell\,}\wedge\cdots\wedge\ux_n}}.
\]
\end{lemma}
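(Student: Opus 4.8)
The plan is to reduce the computation to the two-dimensional subspace $W^\perp$, where $W=\langle\ux_i:i\ne k,\ell\rangle_\bR$ is the common codimension-two subspace of $U_1$ and $U_2$, and then to evaluate the resulting distance by Hadamard-type volume identities, the key input being that $\ux_1\wedge\cdots\wedge\ux_n$ has norm $1$ because $(\ux_1,\dots,\ux_n)$ is a basis of $\bZ^n$. (This is, of course, how the more general \cite[Lemma 4.7]{Ro2015} is obtained, with ``height $1$'' accounting for that last point; I record the argument in the present concrete setting.)

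First I would set up the reduction. Since the $\ux_i$ are linearly independent we have $\dim W=n-2$, $W\subseteq U_1\cap U_2$, and $\dim W^\perp=2$. Put $\uz_1=\proj_{W^\perp}(\ux_k)$ and $\uz_2=\proj_{W^\perp}(\ux_\ell)$; these are non-zero vectors of $W^\perp$. I claim $\dist(U_1,U_2)=\dist(\uz_1,\uz_2)$. Indeed, let $\uu_j$ be a unit normal to $U_j$; since $W\subseteq U_j$ we have $\uu_j\in W^\perp$, and since $\ux_k\in U_1$, $\ux_\ell\in U_2$ we get $\uu_1\cdot\uz_1=\uu_1\cdot\ux_k=0$ and $\uu_2\cdot\uz_2=\uu_2\cdot\ux_\ell=0$. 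In the Euclidean plane $W^\perp$ the rotation by $\pi/2$ carries the line $\langle\uu_j\rangle$ onto its orthogonal complement $\langle\uz_j\rangle$ for $j=1$ and $j=2$ simultaneously, hence preserves the angle between the two lines; combined with Lemma \ref{dist:lemma:uU} this gives $\dist(U_1,U_2)=\dist(\uu_1,\uu_2)=\dist(\uz_1,\uz_2)$.

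Then I would compute $\dist(\uz_1,\uz_2)=\norm{\uz_1\wedge\uz_2}/(\norm{\uz_1}\,\norm{\uz_2})$ term by term. Write $\ub$ for the wedge of the $\ux_i$ with $i\ne k,\ell$ in increasing order, a generator of the one-dimensional space $\bigwedge^{n-2}W$. Because $\ux_k-\uz_1$ and $\ux_\ell-\uz_2$ lie in $W$, hence in the span of the factors of $\ub$, we get $\uz_1\wedge\uz_2\wedge\ub=\ux_k\wedge\ux_\ell\wedge\ub=\pm\,\ux_1\wedge\cdots\wedge\ux_n$, whose norm equals $\abs{\det(\ux_1,\dots,\ux_n)}=1$ since the matrix of the $\ux_i$ lies in $\GL_n(\bZ)$; and since $\uz_1\wedge\uz_2\in\bigwedge^2W^\perp$ and $\ub\in\bigwedge^{n-2}W$ lie in exterior powers of mutually orthogonal subspaces, $\norm{\uz_1\wedge\uz_2\wedge\ub}=\norm{\uz_1\wedge\uz_2}\,\norm{\ub}$, so $\norm{\uz_1\wedge\uz_2}=1/\norm{\ub}$. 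For the denominators, Lemma \ref{dist:lemma:xV} applied to $W$ with basis $(\ux_i)_{i\ne k,\ell}$ yields $\norm{\uz_1}=\norm{\proj_{W^\perp}(\ux_k)}=\norm{\ux_k\wedge\ub}/\norm{\ub}$ and likewise $\norm{\uz_2}=\norm{\ux_\ell\wedge\ub}/\norm{\ub}$, and up to sign $\ux_k\wedge\ub=\pm\,\ux_1\wedge\cdots\wedge\widehat{\ux_\ell\,}\wedge\cdots\wedge\ux_n$ and $\ux_\ell\wedge\ub=\pm\,\ux_1\wedge\cdots\wedge\widehat{\ux_k}\wedge\cdots\wedge\ux_n$. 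Substituting the three norms into $\dist(\uz_1,\uz_2)=\norm{\uz_1\wedge\uz_2}/(\norm{\uz_1}\norm{\uz_2})$, the factors $\norm{\ub}$ combine so that exactly one copy of $\norm{\ub}$ survives in the numerator, giving the asserted formula.

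The only genuinely geometric step is the reduction to $W^\perp$ in the second paragraph; everything afterwards is bookkeeping of volumes. The routine caveats are: the degenerate case $n=2$, where $W=0$, $\uz_j=\ux_j$, and the identity collapses to $\dist(\ux_1,\ux_2)=\abs{\det(\ux_1,\ux_2)}^{-1}\norm{\ux_1}^{-1}\norm{\ux_2}^{-1}$ with $\abs{\det}=1$; the permutation signs in the reorderings, which are irrelevant since only norms enter; and the multiplicativity $\norm{\alpha\wedge\beta}=\norm{\alpha}\,\norm{\beta}$ for $\alpha$, $\beta$ in exterior powers of orthogonal subspaces, which here is immediate because the three spaces $\bigwedge^2W^\perp$, $\bigwedge^{n-2}W$ and $\bigwedge^n\bR^n$ are all one-dimensional.
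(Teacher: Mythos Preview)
Your proof is correct. The paper does not actually prove this lemma but simply cites \cite[Lemma 4.7]{Ro2015} together with the observation that the height of $\bR^n$ is $1$; you have written out precisely the argument behind that citation in the present special case, as you yourself note.
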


For each $m$-tuple of non-zero vectors $\uuv=(\uv_1,\dots,\uv_m)$ of $\bR^n$
with $1\le m\le n$, we write
\begin{equation}
\label{dist:Theta}
 \Theta(\uuv)=\frac{\norm{\uv_1\wedge\cdots\wedge\uv_m}}%
    {\norm{\uv_1}\,\cdots\,\norm{\uv_m}}
 \in [0,1].
\end{equation}
This normalized volume depends only on the classes of $\uv_1,\dots,\uv_m$ in the projective
space over $\bR^n$.  We have $\Theta(\uuv)=0$ if and only if $\uuv$ is linearly dependent,
and $\Theta(\uuv)=1$ if and only if it is orthogonal.  The following result provides a
measure of continuity of this map.

\begin{lemma}
\label{dist:lemma2}
Let $\uuv=(\uv_1,\dots,\uv_m)$ be a linearly independent $m$-tuple of vectors of
$\bR^n$ for some $m\in\{1,\dots,n\}$, let $\delta\in\bR$ with
$0\le \delta<\Theta(\uuv)/(2m)$, and let $\uux=(\ux_1,\dots,\ux_m)$ be an
$m$-tuple of non-zero vectors of $\bR^n$ with $\dist(\ux_j,\uv_j)\le \delta$
for each $j=1,\dots,m$.  Then $\uux$ is linearly independent over $\bR$ and
\begin{equation*}
 \label{dist:lemma2:eq1}
 |\Theta(\uux)-\Theta(\uuv)|\le 2m\delta
\end{equation*}
Upon setting $W=\langle\ux_1,\dots,\ux_m\rangle_\bR$ and
$V=\langle\uv_1,\dots,\uv_m\rangle_\bR$, we also have
\begin{equation}
 \label{dist:lemma2:eq2}
 \dist(W,V)\le 2m\delta/\Theta(\uux).
\end{equation}
\end{lemma}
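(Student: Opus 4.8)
The plan is to reduce everything to multilinear algebra after normalizing to unit vectors. Since the projective distances, the normalized volumes $\Theta(\uux)$ and $\Theta(\uuv)$, and the subspaces $W$ and $V$ all depend only on the directions of the vectors involved, I would first replace each $\ux_j$ and each $\uv_j$ by a unit vector, and then, flipping the sign of $\ux_j$ if necessary, arrange that $\ux_j\cdot\uv_j\ge 0$ for every $j$. Lemma~\ref{dist:lemma1} then gives $\norm{\ux_j-\uv_j}\le 2\dist(\ux_j,\uv_j)\le 2\delta$ for each $j$, and now $\Theta(\uux)=\norm{\ux_1\wedge\cdots\wedge\ux_m}$ and $\Theta(\uuv)=\norm{\uv_1\wedge\cdots\wedge\uv_m}$.

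For the linear independence of $\uux$ and the bound $|\Theta(\uux)-\Theta(\uuv)|\le 2m\delta$, I would use the telescoping identity
\[
 \ux_1\wedge\cdots\wedge\ux_m-\uv_1\wedge\cdots\wedge\uv_m
  =\sum_{j=1}^m \uv_1\wedge\cdots\wedge\uv_{j-1}\wedge(\ux_j-\uv_j)\wedge\ux_{j+1}\wedge\cdots\wedge\ux_m.
\]
Hadamard's inequality bounds the norm of the $j$-th summand by $\norm{\ux_j-\uv_j}\le 2\delta$ (every other factor being a unit vector), so the reverse triangle inequality yields $|\Theta(\uux)-\Theta(\uuv)|\le 2m\delta$. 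Since $2m\delta<\Theta(\uuv)$ by hypothesis, this forces $\Theta(\uux)>0$, hence $\uux$ is linearly independent and $\dim W=m$.

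For \eqref{dist:lemma2:eq2} I would bound $\dist(\uw,V)$ uniformly over non-zero $\uw\in W$ and take the supremum. Writing $\uw=c_1\ux_1+\cdots+c_m\ux_m$ and substituting $\uw$ for $\ux_k$ in the product $\ux_1\wedge\cdots\wedge\ux_m$ makes every term with a repeated $\ux_j$ vanish, leaving $\ux_1\wedge\cdots\wedge\uw\wedge\cdots\wedge\ux_m=c_k\,\ux_1\wedge\cdots\wedge\ux_m$; Hadamard's inequality then gives $|c_k|\le\norm{\uw}/\Theta(\uux)$ for each $k$. On the other hand, using the first formula of Lemma~\ref{dist:lemma:xV}, namely $\dist(\uw,V)=\norm{\proj_{V^\perp}(\uw)}/\norm{\uw}$, together with $\proj_{V^\perp}(\ux_j)=\proj_{V^\perp}(\ux_j-\uv_j)$ (because $\uv_j\in V$) and the fact that orthogonal projection does not increase norms, I get $\norm{\proj_{V^\perp}(\ux_j)}\le\norm{\ux_j-\uv_j}\le 2\delta$. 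Combining these,
\[
 \norm{\proj_{V^\perp}(\uw)}\le\sum_{j=1}^m|c_j|\,\norm{\proj_{V^\perp}(\ux_j)}\le \frac{2m\delta}{\Theta(\uux)}\norm{\uw},
\]
which gives $\dist(\uw,V)\le 2m\delta/\Theta(\uux)$ and hence $\dist(W,V)\le 2m\delta/\Theta(\uux)$.

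There is no serious obstacle here; the two points that need a little care are the normalization step (checking that rescaling and sign changes affect none of the quantities in the statement) and the decision, in the last paragraph, to work with the orthogonal-projection formula for $\dist(\uw,V)$ rather than the wedge-product formula of Lemma~\ref{dist:lemma:xV}, since the latter introduces a spurious factor $\Theta(\uuv)^{-1}$ and only yields the weaker bound $2m\delta/(\Theta(\uux)\Theta(\uuv))$.
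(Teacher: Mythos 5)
Your proof is correct and follows essentially the same route as the paper: the telescoping decomposition of $\ux_1\wedge\cdots\wedge\ux_m-\uv_1\wedge\cdots\wedge\uv_m$ with Hadamard's inequality is exactly the paper's argument via the intermediate products $\ualpha_j$, and the second estimate likewise bounds the coefficients by $\norm{\uw}/\Theta(\uux)$ through a wedge with the remaining $\ux_j$'s before passing to $\proj_{V^\perp}$. The only cosmetic difference is that you project each $\ux_j$ individually instead of comparing $\uw$ with the single vector $\uv=c_1\uv_1+\cdots+c_m\uv_m\in V$ as the paper does, which yields the same bound.
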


\begin{proof}
We may assume that, for each $j=1,\dots,m$, we have
$\norm{\uv_j}=\norm{\ux_j}=1$ and $\ux_j\cdot\uv_j\ge 0$ so that Lemma
\ref{dist:lemma1} gives $\norm{\ux_j-\uv_j} \le 2\delta$.  Set
\[
 \ualpha_j=\uv_1\wedge\cdots\wedge\uv_j\wedge\ux_{j+1}\wedge\cdots\wedge\ux_m
 \quad
 \text{for $j=0,\dots,m$.}
\]
For each $j=0,\dots,m-1$, we find that
\[
 \norm{\ualpha_{j}-\ualpha_{j+1}}
  = \norm{\uv_1\wedge \cdots\wedge  \uv_j \wedge (\ux_{j+1}-\uv_{j+1})
      \wedge\ux_{j+2}\wedge\cdots\wedge\ux_m}
 \le 2\delta,
\]
thus $|\Theta(\uux)-\Theta(\uuv)|=|\,\norm{\ualpha_0}-\norm{\ualpha_m}\,|
\le \norm{\ualpha_0-\ualpha_m}\le 2m\delta$.  Since $2m\delta<\Theta(\uuv)$, this
implies that $\Theta(\uux)>0$, thus $\uux$ is linearly independent
over $\bR$.

To prove the last estimate, choose any non-zero $m$-tuple $(a_1,\dots,a_m)\in\bR^m$,
and form the points $\ux=a_1\ux_1+\cdots+a_m\ux_m\in W$ and
$\uv=a_1\uv_1+\cdots+a_m\uv_m\in V$.  For each $j=1,\dots,m$, we find,
using Hadamard's inequality, that
\[
 \norm{\ux}
 \ge \norm{\ux\wedge\ux_1\wedge\cdots\wedge\widehat{\ux_j\,}\wedge\cdots\wedge\ux_m}
  = |a_j| \Theta(\uux).
\]
This implies that
\[
 \norm{\ux-\uv}\le 2\delta(|a_1|+\dots+|a_m|)\le 2m\delta\norm{\ux}/\Theta(\uux),
\]
and so
\[
 \dist(\ux,V)
  = \frac{\norm{\proj_{V^\perp}(\ux)}}{\norm{\ux}}
  \le \frac{\norm{\ux-\uv}}{\norm{\ux}}
  \le 2m\delta/\Theta(\uux).
\]
As $\ux$ can be any non-zero point of $W$, this gives \eqref{dist:lemma2:eq2}.
\end{proof}

The last lemma below will be needed in Section \ref{sec:cons} to prove
part 2) of Theorem \ref{intro:thm1}.

\begin{lemma}
\label{dist:lemma3}
Let $r$ be an integer with $1\le r\le n$, let  $(\uv_1,\dots,\uv_n)$ be an
orthonormal basis of $\bR^n$ and let
\[
 V=\langle\uv_1+\cdots+\uv_r,\uv_{r+1},\dots,\uv_n\rangle_\bR.
\]
Suppose that non-zero vectors $\ux,\ux_1,\dots,\ux_r\in\bR^n$ satisfy
\[
 \dist(\ux,V)\le \delta
 \et
 \max_{1\le j\le r}\dist(\ux_j,\uv_j) \le\delta
\]
for some $\delta\in\bR$ with $0\le \delta\le 1/(24r)$, and that
\[
 \ux=a_1\ux_1+\cdots+a_r\ux_r
\]
for some $a_1,\dots,a_r\in\bR$.  Then, for each $j=1,\dots,r$, we have
\begin{equation*}
 \label{dist:lemma3:eq1}
 \frac{\norm{\ux}}{2\sqrt{r}}
 \le \norm{a_j\ux_j}
 \le \frac{2\norm{\ux}}{\sqrt{r}}.
\end{equation*}
\end{lemma}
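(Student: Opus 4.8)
The idea is to compare $\ux$ with the idealized vector $\uw=a_1\uv_1+\cdots+a_r\uv_r$. Since $(\uv_1,\dots,\uv_n)$ is orthonormal, $\norm{\uw}^2=a_1^2+\cdots+a_r^2$, and the whole statement reduces to showing that each $|a_j|$ is comparable to $\norm{\ux}/\sqrt r$. First I would normalize: rescaling $\ux_j$ changes neither $\norm{a_j\ux_j}$ nor the hypotheses, so I may assume $\norm{\ux_j}=1$ and, after replacing $\ux_j,a_j$ by $-\ux_j,-a_j$ if necessary, $\ux_j\cdot\uv_j\ge 0$; then Lemma \ref{dist:lemma1} gives $\norm{\ux_j-\uv_j}\le 2\delta$, and $\norm{a_j\ux_j}=|a_j|$. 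Setting $E=\norm{\ux-\uw}=\norm{\sum_j a_j(\ux_j-\uv_j)}$, the triangle and Cauchy--Schwarz inequalities yield $E\le 2\delta\sum_j|a_j|\le 2\delta\sqrt r\,\norm{\uw}$. As $\delta\le 1/(24r)$ forces $2\delta\sqrt r\le 1/12$, this already gives $\tfrac{12}{13}\norm{\ux}\le\norm{\uw}\le\tfrac{12}{11}\norm{\ux}$ and $E\le\tfrac{24}{11}\delta\sqrt r\,\norm{\ux}$.

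Next I would exploit $\dist(\ux,V)\le\delta$. The key point is the explicit description $V^\perp=\{c_1\uv_1+\cdots+c_r\uv_r:\ c_1+\cdots+c_r=0\}$: writing $\ux=\sum_{i=1}^n x_i\uv_i$, it follows that $\proj_{V^\perp}(\ux)=\sum_{i=1}^r(x_i-\overline{x})\uv_i$ with $\overline{x}=\tfrac1r(x_1+\cdots+x_r)$, so Lemma \ref{dist:lemma:xV} turns the hypothesis into $\sum_{i=1}^r(x_i-\overline{x})^2\le\delta^2\norm{\ux}^2$. Since $x_i=\uv_i\cdot\ux=a_i+\uv_i\cdot(\ux-\uw)$ for $1\le i\le r$, the vector $(x_i-a_i)_{1\le i\le r}$ has Euclidean norm at most $E$; writing $\overline{a}=\tfrac1r(a_1+\cdots+a_r)$ and using $(a_i-\overline{a})_i=(x_i-\overline{x})_i-(d_i-\overline{d})_i$, where $d_i=x_i-a_i$ and $(d_i-\overline{d})_i$ is obtained from $(d_i)_i$ by orthogonal projection onto the hyperplane of $\bR^r$ with zero coordinate-sum, hence has norm $\le E$, I obtain $\big(\sum_{i=1}^r(a_i-\overline{a})^2\big)^{1/2}\le\delta\norm{\ux}+E$. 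Using $\delta\le 1/(24r)$ once more bounds the right-hand side by $\big(\tfrac1{24}+\tfrac1{11}\big)\norm{\ux}/\sqrt r<\norm{\ux}/(7\sqrt r)$.

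Finally, I would feed this into the variance decomposition $\norm{\uw}^2=\sum_{i=1}^r a_i^2=r\,\overline{a}^2+\sum_{i=1}^r(a_i-\overline{a})^2$. Together with $\tfrac{12}{13}\norm{\ux}\le\norm{\uw}\le\tfrac{12}{11}\norm{\ux}$ and $\sum_i(a_i-\overline{a})^2<\norm{\ux}^2/(49r)$, this gives $\tfrac{9}{10}\tfrac{\norm{\ux}}{\sqrt r}<|\overline{a}|<\tfrac{11}{10}\tfrac{\norm{\ux}}{\sqrt r}$; combined with $|a_j-\overline{a}|\le\big(\sum_i(a_i-\overline{a})^2\big)^{1/2}<\norm{\ux}/(7\sqrt r)$ for every $j$, the inequalities $|\overline{a}|-|a_j-\overline{a}|\le|a_j|\le|\overline{a}|+|a_j-\overline{a}|$ yield $\tfrac{\norm{\ux}}{2\sqrt r}\le|a_j|=\norm{a_j\ux_j}\le\tfrac{2\norm{\ux}}{\sqrt r}$, which is the assertion. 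The only genuinely delicate aspect is the numerical bookkeeping: one must check that, with $\delta\le 1/(24r)$, all accumulated error terms stay well inside the gaps between the constants $9/10,\,11/10$ and the targets $1/2,\,2$. The estimates above show there is comfortable room, so no sharper form of the intermediate inequalities is required.
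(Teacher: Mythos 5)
Your proof is correct; I checked the projection formula and the numerical constants and everything goes through comfortably under $\delta\le 1/(24r)$ (for instance $E\le 2\delta\sqrt r\,\norm{\uw}\le\norm{\uw}/12$, then $\tfrac1{24}+\tfrac1{11}<\tfrac17$, and finally $\tfrac9{10}-\tfrac17>\tfrac12$ and $\tfrac{11}{10}+\tfrac17<2$). The route differs from the paper's in how the hypothesis $\dist(\ux,V)\le\delta$ is exploited. The paper also normalizes $\ux$ itself to a unit vector, picks a closest unit vector $\uv\in V$ (so $\norm{\ux-\uv}\le 2\delta$ by Lemma \ref{dist:lemma1}), writes $\uv=b(\uv_1+\cdots+\uv_r)+\uw$ with $\uw\in\langle\uv_{r+1},\dots,\uv_n\rangle_\bR$, compares with $\uy=a_1\uv_1+\cdots+a_r\uv_r$, and shows each $a_j$ is within $4A\delta$ of $1/\sqrt r$, where $A=1+|a_1|+\cdots+|a_r|$ is then bounded by $3\sqrt r$ through a self-referential (bootstrap) inequality. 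You instead use the explicit description of $V^\perp$ together with Lemma \ref{dist:lemma:xV} to turn the hypothesis into an $\ell^2$ bound $\sum_{i\le r}(x_i-\xbar)^2\le\delta^2\norm{\ux}^2$ on the coordinates, compare $\ux$ with $\uw=a_1\uv_1+\cdots+a_r\uv_r$, and conclude via the mean--variance identity $\sum a_i^2=r\,\overline a^2+\sum(a_i-\overline a)^2$, with Cauchy--Schwarz ($\sum|a_j|\le\sqrt r\,\norm{\uw}$) replacing the paper's bootstrap on $A$. Both arguments reduce to showing that all $a_j$ cluster around a common value of size about $\norm{\ux}/\sqrt r$; yours requires identifying $V^\perp$ and the projection formula explicitly (which is easy here since the spanning vectors of $V$ are orthogonal), while the paper's stays with the nearest point in $V$ and avoids coordinates, at the cost of the slightly less transparent bound on $A$. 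Either version is acceptable.
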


\begin{proof}
Upon dividing $\ux,\ux_1,\dots,\ux_r$ by their norms, we may assume that
these vectors have norm $1$.  Then, upon multiplying each of them appropriately
by $\pm 1$, we may further assume, by Lemma \ref{dist:lemma1}, that
\[
 \norm{\ux-\uv}\le 2\delta
 \et
 \max_{1\le j\le r}\norm{\ux_j-\uv_j} \le 2\delta
\]
for a unit vector $\uv$ of $V$ which is closest to $\ux$, of the form
\[
 \uv = b(\uv_1+\cdots+\uv_r)+\uw
\]
with $b\ge 0$ and $\uw\in W$, where $W=\langle \uv_{r+1},\dots,\uv_n\rangle_\bR$.
Upon setting
\[
 \uy=a_1\uv_1+\cdots+a_r\uv_r\in W^\perp
 \et
 A = 1+|a_1|+\cdots+|a_r|,
\]
we find
\begin{align*}
 \norm{\uv-\uy}
  &=\norm{(\uv-\ux)+a_1(\ux_1-\uv_1)+\cdots+a_r(\ux_r-\uv_r)}
 \le 2A\delta, \\
 \abs{a_j-b} &=\abs{(\uy-\uv)\cdot\uv_j} \le \norm{\uv-\uy}
  \quad\text{for $j=1,\dots,r$,}\\
 \abs{b\sqrt{r}-1}
   &=\abs{ \norm{b(\uv_1+\cdots+\uv_r)} - \norm{\uv} }
   \le \norm{\uw}.
\end{align*}
Since $\uw=\proj_W(\uv)=\proj_W(\uv-\uy)$, we also have
$\norm{\uw}\le \norm{\uv-\uy}$ and so the three preceding estimates
give
\begin{equation}
 \label{dist:lemma3:eq5}
 \max_{1\le j\le r}\abs{a_j-1/\sqrt{r}}
  \le (1+1/\sqrt{r}) \norm{\uv-\uy}\le 4A\delta.
\end{equation}
In turn, by definition of $A$, this implies that
\[
 A\le 1+r(1/\sqrt{r}+4A\delta) \le 2\sqrt{r}+A/6
\]
since $4r\delta\le 1/6$.  So, we obtain the upper bound $A\le 3\sqrt{r}$
which substituted in \eqref{dist:lemma3:eq5} gives
\[
  \max_{1\le j\le r}\abs{a_j-1/\sqrt{r}}
  \le (12r\delta)/\sqrt{r}\le 1/(2\sqrt{r}).
\]
and so $1/(2\sqrt{r})\le |a_j| \le 2/\sqrt{r}$ for $j=1,\dots,r$.
\end{proof}

%
%

\section{Proof of the first part of Theorem \ref{intro:thm1}
  and deduction of the corollaries}
\label{sec:red}

In this section, we prove the first part of our first main theorem \ref{intro:thm1}
and, assuming that its second part holds as well, we deduce the corollaries stated in
the introduction.  To this end, we work with a fixed integer $n\ge 2$, and we identify
$\GL_n(\bZ)$ with the subgroup of $\GL_n(\bR)$ consisting of the linear
operators $T$ on $\bR^n$ with $T(\bZ^n)=\bZ^n$.  The first lemma below
is our main tool.

\begin{lemma}
\label{red:lemma1}
Let $\uv_1,\dots,\uv_m$ be linearly independent vectors of $\bR^n$ for some integer
$m$ with $0\le m<n$, and let $\delta>0$.  Then, there exists $T\in\GL_n(\bZ)$ such that
$\dist(T(\ue_j),\uv_j)\le \delta$ for each index $j$ with $1\le j\le m$.
\end{lemma}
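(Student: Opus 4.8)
The plan is to produce $T$ by constructing, one vector at a time, a basis $(\ux_1,\dots,\ux_n)$ of $\bZ^n$ with $\dist(\ux_j,\uv_j)\le\delta$ for $1\le j\le m$, and then letting $T$ be the operator with $T\ue_j=\ux_j$; by the identification recalled above, such a $T$ lies in $\GL_n(\bZ)$. The base case $m=0$ is $T=\mathrm{Id}$. For the inductive construction I would prove the following step: if $(\ux_1,\dots,\ux_n)$ is a basis of $\bZ^n$ and $1\le k\le m$, then, keeping $\ux_1,\dots,\ux_{k-1}$ fixed, one can replace $\ux_k,\dots,\ux_n$ by vectors $\ux_k',\dots,\ux_n'$ so that $(\ux_1,\dots,\ux_{k-1},\ux_k',\dots,\ux_n')$ is again a basis of $\bZ^n$ and $\dist(\ux_k',\uv_k)\le\delta$. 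Running this for $k=1,\dots,m$ in turn (each step leaving the previously adjusted vectors untouched) yields the desired basis. The hypothesis $m<n$ enters precisely here: at the $k$-th step the sublattice $\Lambda_k=\langle\ux_k,\dots,\ux_n\rangle_\bZ$ has rank $n-k+1\ge n-m+1\ge 2$, and rank at least $2$ is what makes it possible to approximate an arbitrary direction by a primitive vector of $\Lambda_k$.

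For the inductive step I would first recall the standard fact that the vectors $\ux_k'$ completing $\ux_1,\dots,\ux_{k-1}$ to a basis of $\bZ^n$ are exactly those of the form $\ux_k'=\ell+\uz$ with $\ell\in\langle\ux_1,\dots,\ux_{k-1}\rangle_\bZ$ and $\uz$ a primitive vector of $\Lambda_k$ (equivalently, writing $\ux_k'=c_1\ux_1+\dots+c_n\ux_n$, that $\gcd(c_k,\dots,c_n)=1$: this is a short computation with the saturation of sublattices, since a saturated sublattice of $\bZ^n$ extends to a basis). Write $\uv_k=\uy_1+\uy_2$ along the direct sum $\bR^n=\langle\ux_1,\dots,\ux_{k-1}\rangle_\bR\oplus(\Lambda_k)_\bR$, where $(\Lambda_k)_\bR=\langle\ux_k,\dots,\ux_n\rangle_\bR$; replacing $\uv_k$ by an arbitrarily small perturbation we may assume $\uy_2\ne 0$. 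Now choose a primitive vector $\uz$ of $\Lambda_k$ whose direction inside $(\Lambda_k)_\bR$ is as close as we wish to that of $\uy_2$ and whose norm $\norm{\uz}$ is as large as we wish, let $\lambda>0$ be defined by $\norm{\lambda\uy_2}=\norm{\uz}$, and pick $\ell$ in the lattice $\langle\ux_1,\dots,\ux_{k-1}\rangle_\bZ$ within the covering radius of that lattice (a fixed constant) of the point $\lambda\uy_1$ of its span. Then $\ux_k'=\ell+\uz$ differs from $\lambda\uv_k$ by $(\ell-\lambda\uy_1)+(\uz-\lambda\uy_2)$, of norm at most that covering radius plus a quantity proportional to $\norm{\uz}$ times the small angular error between $\uz$ and $\lambda\uy_2$; since $\norm{\lambda\uv_k}$ and $\norm{\uz}$ are comparable and the latter is large, dividing through and using $\dist(\ux_k',\uv_k)=\dist(\ux_k',\lambda\uv_k)\le\norm{\ux_k'-\lambda\uv_k}/\norm{\lambda\uv_k}$ shows $\dist(\ux_k',\uv_k)$ can be made $\le\delta$. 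Finally $(\ux_1,\dots,\ux_{k-1},\ux_k')$ extends to a basis of $\bZ^n$, which provides $\ux_{k+1}',\dots,\ux_n'$.

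The only genuinely external ingredient is: in a Euclidean space of dimension $d\ge 2$ carrying a full lattice $\Lambda$, for every direction, every $R>0$, and every $\eta>0$ there is a primitive vector of $\Lambda$ of norm $>R$ at projective distance $<\eta$ from that direction. This is elementary: the prescribed $\eta$-neighbourhood of the target point in $\mathbb{P}^{d-1}(\bR)$ pulls back to a nonempty open cone in $\bR^d$, which, because $d\ge 2$, contains infinitely many pairwise non-proportional points of $\Lambda\otimes\bQ$; taking the primitive lattice vector on each of the corresponding lines gives infinitely many pairwise distinct primitive vectors of $\Lambda$ inside the cone, and only finitely many lattice vectors have norm $\le R$. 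I expect the main obstacle to be not any single step but the bookkeeping in the induction: specifying how small "arbitrarily close" must be at each stage, and converting closeness measured in the coordinates attached to the current basis $(\ux_1,\dots,\ux_n)$ into closeness for the standard Euclidean distance — the change-of-basis operator lies in $\GL_n(\bZ)$ but need not be near an isometry, so one loses a bounded factor depending on the current basis, which is harmless since we are free to demand as much precision as we like at each of the $m$ steps.
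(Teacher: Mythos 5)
Your proof is correct, and it shares the paper's overall skeleton---an induction that builds a basis $(\ux_1,\dots,\ux_n)$ of $\bZ^n$ one vector at a time, leaving the already-adjusted vectors untouched, and then takes $T$ to be the operator sending $\ue_j$ to $\ux_j$---but the mechanism in the key step is genuinely different. The paper works in the coordinates of the current basis: it writes $\uv_m=a_1\uy_1+\cdots+a_n\uy_n$, rounds $t\uv_m$ coordinatewise for a large scaling factor $t$, and forces primitivity of the new vector modulo the fixed part by an arithmetic trick (choosing $t$ so that the largest tail coefficient becomes a prime $p$ while another tail coefficient lies in $[-1,p)$, making the tail coefficients coprime as a set); the degenerate case where $\uv_m$ lies in the span of the fixed vectors is handled by the explicit choice $(-1,0,\dots,0)$ of tail coefficients. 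You instead characterize the admissible new vectors as $\ell+\uz$ with $\ell$ in the head lattice and $\uz$ primitive in the rank-$(n-k+1)$ tail lattice, and then invoke a density statement---primitive vectors of a lattice of rank at least $2$ have directions dense in the corresponding projective space and can be taken of arbitrarily large norm---together with a covering-radius adjustment of the head component and a small perturbation of $\uv_k$ to guarantee a nonzero tail component. Both arguments use $m<n$ in the same way (the tail lattice must have rank at least $2$). What the paper's route buys is brevity and explicitness: one rounding plus one prime choice, with no auxiliary density lemma and no oblique-projection or covering-radius constants. What your route buys is modularity: the extendability criterion and the density of primitive directions are clean standalone facts (the latter is essentially the case $m=1$ of the lemma, in the spirit of the Erd\"os result the paper cites), and the quantitative bookkeeping you flag---constants depending on the current basis, absorbed by demanding as much precision as needed at each of the $m$ steps---is routine and hides no gap.
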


Note that the restriction $m<n$ is needed in general.  For $m=1$, the lemma is a
consequence of a much stronger result of Erd\"os \cite{Er1958}.

\begin{proof}
We are going to prove, by induction on $m$, that there is a basis
$(\ux_1,\dots,\ux_n)$ of $\bZ^n$ such that $\dist(\ux_j,\uv_j)\le \delta$ for
each $j$ with $1\le j\le m$.  The result then follows by taking for $T$ the element
of $\GL_n(\bZ)$ which sends $\ue_j$ to $\ux_j$ for each $j=1,\dots,n$.

For $m=0$, we can take any basis $(\ux_1,\dots,\ux_n)$ of $\bZ^n$.
Suppose now that $1\le m<n$.  We may assume, by induction, that
there is a basis $(\uy_1,\dots,\uy_n)$ of $\bZ^n$
such that $\dist(\uy_j,\uv_j)\le \delta$ for each $j$ with $1\le j\le m-1$.  Write
\[
 \uv_m=a_1\uy_1+\cdots+a_n\uy_n
\]
with $a_1,\dots,a_n\in\bR$.  By permuting $\uy_m,\dots,\uy_n$ if necessary,
and by multiplying each of them by $\pm 1$ appropriately, we may assume that
$0\le a_m\le \cdots\le a_n$.  For a choice of $t>0$ to be fixed later,
we form the point
\[
 \ux_m=b_1\uy_1+\cdots+b_n\uy_n \quad\text{where}\quad
 b_j=\begin{cases}
           \lceil ta_j\rceil &\text{if $j\neq m$,}\\
           \lceil ta_j\rceil - 1 &\text{if $j=m$.}
       \end{cases}
\]
By construction, we have $\norm{\ux_m-t\uv_m}\le Y$ where
$Y=\norm{\uy_1}+\cdots+\norm{\uy_n}$ is independent of $t$.  Thus, if $t$ is
large enough, say $t\ge t_0$, we obtain
\[
 \dist(\ux_m,\uv_m)
  =\frac{\norm{(\ux_m-t\uv_m)\wedge \uv_m}}{\norm{\ux_m}\,\norm{\uv_m}}
 \le \frac{\norm{\ux_m-t\uv_m}}{\norm{\ux_m}}
 \le \frac{Y}{t\norm{\uv_m}-Y}
 \le \delta.
\]
If $a_n\neq0$, we take $t=p/a_n$ for a prime number $p$ with $p\ge a_nt_0$, so that
the above inequality holds.  Then, we have $b_n=p$ and $-1\le b_m<p$.  So, the
integers $b_m,\dots,b_n$ are relatively prime as a set.  If $a_n=0$, we take $t=t_0$
and then $(b_m,\dots,b_n)=(-1,0,\dots,0)$.  So, in both cases, the point
$\ux=b_m\uy_m+\cdots+b_n\uy_n$ can be completed to a basis
$(\ux,\ux_{m+1},\dots,\ux_n)$ of $\langle \uy_m,\dots,\uy_n\rangle_\bZ$. Then,
$(\uy_1,\dots,\uy_{m-1},\ux_m,\dots,\ux_n)$ is a basis of $\bZ^n$ with the required
properties.
\end{proof}

\begin{lemma}
\label{red:lemma2}
Let $T\in\GL_n(\bR)$.  There exists a constant $\kappa\ge 1$ with the following
properties:
\begin{itemize}[labelindent=10pt, leftmargin=*]
\item[$1)$] $\kappa^{-1}\norm{\ux}\le \norm{T(\ux)}\le \kappa\norm{\ux}$
 for each $\ux\in\bR^n$;
\medskip
\item[$2)$] $\kappa^{-1}\dist(\ux,\uy)\le \dist(T(\ux),T(\uy))\le \kappa\dist(\ux,\uy)$
 for any non-zero $\ux,\uy\in\bR^n$;
\medskip
\item[$3)$] $\kappa^{-1}\dist(\ux,V)\le \dist(T(\ux),T(V))\le \kappa\dist(\ux,V)$
 for any non-zero $\ux\in\bR^n$ and any non-zero subspace $V$ of $\bR^n$;
\end{itemize}
\end{lemma}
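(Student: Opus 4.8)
The plan is to reduce all three statements to operator-norm estimates for $T$ acting on the exterior powers of $\bR^n$. Recall that every $S\in\GL_n(\bR)$ induces, for each $k\in\{1,\dots,n\}$, a linear automorphism $\tbigwedge^k S$ of $\tbigwedge^k\bR^n$ characterized by $\ux_1\wedge\cdots\wedge\ux_k\mapsto S(\ux_1)\wedge\cdots\wedge S(\ux_k)$, and that $(\tbigwedge^k S)^{-1}=\tbigwedge^k(S^{-1})$. Using on each $\tbigwedge^k\bR^n$ the Euclidean norm fixed in Section \ref{sec:dist} and writing $\|\cdot\|$ also for the associated operator norms, I would set
\[
 \lambda=\max_{1\le k\le n}\max\{\,\|\tbigwedge^k T\|,\ \|\tbigwedge^k T^{-1}\|\,\},
\]
which satisfies $\lambda\ge 1$, and then take $\kappa=\lambda^3$.

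For part 1), the case $k=1$ gives $\|T(\ux)\|\le\lambda\|\ux\|$ for all $\ux\in\bR^n$, and applying the same bound to $T^{-1}$ yields $\|\ux\|=\|T^{-1}(T(\ux))\|\le\lambda\|T(\ux)\|$; since $\kappa\ge\lambda$, both inequalities of 1) follow. For part 2), I would write $\dist(T(\ux),T(\uy))=\|T(\ux)\wedge T(\uy)\|/(\|T(\ux)\|\,\|T(\uy)\|)$ and use $T(\ux)\wedge T(\uy)=(\tbigwedge^2 T)(\ux\wedge\uy)$, so that the numerator lies in $[\lambda^{-1}\|\ux\wedge\uy\|,\ \lambda\|\ux\wedge\uy\|]$ while, by part 1), the denominator lies in $[\lambda^{-2}\|\ux\|\,\|\uy\|,\ \lambda^2\|\ux\|\,\|\uy\|]$; dividing gives $\lambda^{-3}\dist(\ux,\uy)\le\dist(T(\ux),T(\uy))\le\lambda^3\dist(\ux,\uy)$, which is 2).

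For part 3), I would invoke the formula of Lemma \ref{dist:lemma:xV}. If $\ux\in V$ (in particular if $V=\bR^n$), both $\dist(\ux,V)$ and $\dist(T(\ux),T(V))$ vanish and there is nothing to prove, so assume $\ux\notin V$ and fix a basis $(\uv_1,\dots,\uv_m)$ of $V$ with $m=\dim V\le n-1$. Then $(T(\uv_1),\dots,T(\uv_m))$ is a basis of $T(V)$ and Lemma \ref{dist:lemma:xV} gives
\[
 \dist(T(\ux),T(V))
  = \frac{\|(\tbigwedge^{m+1}T)(\ux\wedge\uv_1\wedge\cdots\wedge\uv_m)\|}
         {\|T(\ux)\|\;\|(\tbigwedge^{m}T)(\uv_1\wedge\cdots\wedge\uv_m)\|}.
\]
Bounding the numerator using $\tbigwedge^{m+1}T$ and its inverse, and the two factors of the denominator using part 1) and using $\tbigwedge^{m}T$ and its inverse respectively, each of these three quantities is controlled up to a factor $\lambda$; comparing with the analogous formula for $\dist(\ux,V)$ then yields $\lambda^{-3}\dist(\ux,V)\le\dist(T(\ux),T(V))\le\lambda^3\dist(\ux,V)$, which is 3).

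There is essentially no serious obstacle here; the computations are routine bookkeeping with Hadamard-type estimates. The only point requiring a little care is that in part 3) the exterior degree $m+1$ depends on the subspace $V$, so $\kappa$ must be chosen uniformly — which is exactly why $\lambda$ is defined as a maximum over all exterior powers $k=1,\dots,n$ at the outset.
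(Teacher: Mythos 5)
Your proposal is correct and follows essentially the same route as the paper: bound the action of $T$ on each exterior power $\tbigwedge^k\bR^n$ by a uniform constant and feed this into the formula of Lemma \ref{dist:lemma:xV}, treating the degenerate case ($V=\bR^n$, or $\ux\in V$) separately. The only cosmetic differences are that you prove part 2) directly rather than as the special case $V=\langle\uy\rangle_\bR$ of part 3), and you phrase the exterior-power bounds via operator norms of $T$ and $T^{-1}$ instead of two-sided constants $c_m$.
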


\begin{proof}
For each $m=1,\dots,n$, the linear operator $\bigwedge^mT$ on $\bigwedge^m\bR^n$
is invertible and so there is a constant $c_m\ge 1$ such that
\[
 c_m^{-1}\norm{\ux_1\wedge\cdots\wedge\ux_m}
 \le \norm{T(\ux_1)\wedge\cdots\wedge T(\ux_m)}
 \le c_m \norm{\ux_1\wedge\cdots\wedge\ux_m}
\]
for any $\ux_1,\dots,\ux_m\in\bR^n$.  Thus, for a non-zero point $\ux$ of $\bR^n$
and a non-zero proper subspace $V$ of $\bR^n$ with basis $(\uv_1,\dots,\uv_m)$,
we find, using Lemma \ref{dist:lemma:xV}, that
\begin{align*}
 \dist(T(\ux),T(V))
 &= \frac{\norm{T(\ux)\wedge T(\uv_1)\wedge\cdots\wedge T(\uv_m)}}%
   {\norm{T(\ux)}\,\norm{T(\uv_1)\wedge\cdots\wedge T(\uv_m)}}\\
 &\le \frac{c_{m+1}\norm{\ux\wedge\uv_1\wedge\cdots\wedge\uv_m}}%
   {c_1^{-1}c_m^{-1}\norm{\ux}\,\norm{\uv_1\wedge\cdots\wedge\uv_m}}
 = c_1c_mc_{m+1}\dist(\ux,V),
\end{align*}
 and similarly $\dist(T(\ux),T(V))\ge (c_1c_mc_{m+1})^{-1}\dist(\ux,V)$.
If $V=\bR^n$, then $T(V)=\bR^n$ and so $\dist(T(\ux),T(V))=\dist(\ux,V)=0$.
Thus, properties 1) and 3) hold for an appropriate choice of $\kappa$.  Then property 2) follows
by taking $V=\langle\uy\rangle_\bR$.
\end{proof}

In the arguments below, we denote by $\trT$ the transpose of a linear operator $T$
on $\bR^n$, namely the linear operator on $\bR^n$ characterized by
$T(\ux)\cdot\uy=\ux\cdot\trT(\uy)$ for any $\ux,\uy\in\bR^n$.

\subsection*{Proof of Theorem \ref{intro:thm1}, part 1)}  When $n=m+2$ and
$V=\langle\ue_1,\dots,\ue_{m+1}\rangle_\bR \subseteq \bR^{m+2}$, that statement
follows from \cite[Theorem 1 (b)]{Th1990}.  There, Thurnheer shows that, for
any $\delta,\epsilon>0$ and any point $\uu\in\bR^{m+2}$ with $\bQ$-linearly
independent coordinates, there exists a non-zero point $\ux\in\bZ^{m+2}$ such that
\[
 |\ux\cdot\ue_{m+2}|\le\delta\norm{\ux} \et |\ux\cdot\uu|\le\epsilon\norm{\ux}^{-\rho}
\]
where $\rho$ is given by \eqref{intro:thm1:eq:rho}.  However, since
$V=\ue_{m+2}^\perp$, the first inequality may be rewritten as $\dist(\ux,V)\le \delta$.
We give a simplified, self-contained proof of this result in Section \ref{sec:simplif}.

In general, suppose that $n\ge m+2\ge 3$, that $V$ is an arbitrary subspace of $\bR^n$
of dimension $m+1$ and that $\uu\in\bR^n$ has linearly independent coordinates over
$\bQ$.  Fix $\delta,\epsilon>0$ with $\delta<1$ and choose an orthonormal basis
$\uuv=(\uv_1,\dots,\uv_{m+1})$ of $V$.  By Lemma \ref{red:lemma1}, there exists
 $T\in\GL_n(\bZ)$ such that
\[
 \dist(T(\ue_j),\uv_j) \le \frac{\delta}{8(m+1)} \quad\text{for $j=1,\dots,m+1$.}
\]
Set $\uux=(T(\ue_1),\dots,T(\ue_{m+1}))$.   By Lemma \ref{dist:lemma2}, we have
$\Theta(\uux)\ge \Theta(\uuv)-\delta/4 \ge 1/2$ and so
\begin{equation}
 \label{red:proof:thm1:eq1}
 \dist(T(\tV),V) \le \delta/2
 \quad\text{where}\quad
 \tV=\langle\ue_1,\dots,\ue_{m+1}\rangle_\bR\subseteq \bR^n.
\end{equation}
For our choice of $T$, choose a constant $\kappa\ge 1$ as in Lemma \ref{red:lemma2},
and define
\[
 \tuu=\trT(\uu)\in\bR^n.
\]
Since $T\in\GL_n(\bZ)$, the $n$ coordinates of
$\tuu$ are linearly independent over $\bQ$.  A fortiori, its first $m+2$ coordinates are
linearly independent over $\bQ$.  Thus, under the natural identification of $\bR^{m+2}$
with $\langle\ue_1,\dots,\ue_{m+2}\rangle_\bR \subseteq \bR^n$, the result of
Thurnheer provides
a non-zero point $\tux\in\langle\ue_1,\dots,\ue_{m+2}\rangle_\bZ$ such that
\[
 \dist(\tux,\tV) \le \delta/(2\kappa)
 \et
 |\tux\cdot\tuu| \le \epsilon \kappa^{-\rho}\norm{\tux}^{-\rho}.
\]
Then $\ux=T(\tux)$ is a non-zero point of $\bZ^n$ which, by definition of $\tuu$,
satisfies $\ux\cdot\uu=\tux\cdot\tuu$.  Therefore, by Lemma \ref{red:lemma2}, the
preceding inequalities yield
\[
 \dist(\ux,T(\tV)) \le \kappa\dist(\tux,\tV) \le \delta/2
 \et
 |\ux\cdot\uu|  \le \epsilon \kappa^{-\rho}\norm{\tux}^{-\rho}
  \le \epsilon \norm{\ux}^{-\rho}.
\]
Then, using \eqref{red:proof:thm1:eq1}, Lemma \ref{dist:lemma:xVV} provides
\[
 \dist(\ux,V) \le \dist(\ux,T(\tV))+\dist(T(\tV),V) \le \delta
\]
as needed.

\begin{lemma}
\label{red:lemma3}
Let $V_1$, $V_2$ be non-zero subspaces of $\bR^n$.  There is a constant $\kappa$
depending only on $V_1$ and $V_2$ such that any non-zero point $\ux$ of $\bZ^n$
satisfies
\begin{equation}
\label{red:lemma3:eq}
 \dist(\ux,V_1\cap V_2)\le \kappa\max\{\dist(\ux,V_1),\dist(\ux,V_2)\},
\end{equation}
with the convention that the left hand side is $1$ if $V_1\cap V_2=\{0\}$.
\end{lemma}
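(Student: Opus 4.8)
The plan is to reduce to the case of subspaces of codimension $1$, where the distance is realized by normal vectors and one can compute directly. First I would handle the trivial cases: if $V_1 \subseteq V_2$ or $V_2 \subseteq V_1$, the left-hand side is $\dist(\ux, V_1 \cap V_2) = \dist(\ux, V_1)$ or $\dist(\ux, V_2)$ respectively, so $\kappa = 1$ works; and if $V_1 = \bR^n$ or $V_2 = \bR^n$ the statement is immediate. So assume both are proper and neither contains the other, hence $V_1 \cap V_2$ is a proper (possibly zero) subspace. Write $W = V_1 \cap V_2$.

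The key step is to reduce the general case to codimension-one subspaces. Choose codimension-one subspaces $U_1 \supseteq V_1$ and $U_2 \supseteq V_2$ whose intersection $U_1 \cap U_2$ has codimension $2$ (possible since $V_1 \not\subseteq V_2$), and more generally cover $W = V_1 \cap V_2$ as an intersection $W = U_1 \cap \cdots \cap U_r$ of finitely many hyperplanes each containing $V_1$ or $V_2$. Then, iterating Lemma \ref{dist:lemma:xVV} (the triangle inequality $\dist(\ux, W) \le \dist(\ux, U_{i_0}) + \dist(U_{i_0}, W)$ for a well-chosen index, or rather a chain of such), it suffices to bound $\dist(\ux, U_i \cap U_j)$ in terms of $\dist(\ux, U_i)$ and $\dist(\ux, U_j)$ for two hyperplanes $U_i, U_j$. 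For two hyperplanes with unit normals $\uu_i \in U_i^\perp$, $\uu_j \in U_j^\perp$, write $\proj_{(U_i \cap U_j)^\perp}(\ux)$ in terms of the (at most two-dimensional) span of $\uu_i, \uu_j$; since $U_i \ne U_j$, the Gram matrix of $(\uu_i, \uu_j)$ is invertible with inverse depending only on $U_i, U_j$, so $\norm{\proj_{(U_i \cap U_j)^\perp}(\ux)} \le \kappa' \big(|\ux \cdot \uu_i| + |\ux \cdot \uu_j|\big) = \kappa'\big(\norm{\proj_{U_i^\perp}(\ux)} + \norm{\proj_{U_j^\perp}(\ux)}\big)$, and dividing by $\norm{\ux}$ and using Lemma \ref{dist:lemma:xV} gives $\dist(\ux, U_i \cap U_j) \le \kappa' \big(\dist(\ux, U_i) + \dist(\ux, U_j)\big)$, with $\kappa'$ depending only on $U_i, U_j$. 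Finally, $\dist(\ux, U_i) \le \kappa'' \dist(\ux, V_1)$ by Lemma \ref{red:lemma2} applied to a fixed linear automorphism sending $V_1$ into $U_i$ — or more simply by observing $\dist(\ux, U_i) \le \dist(\ux, V_1)$ directly since $V_1 \subseteq U_i$.

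I expect the main obstacle to be the bookkeeping in the reduction from arbitrary intersections to pairs of hyperplanes: one must choose the hyperplanes $U_1, \dots, U_r$ so that each contains $V_1$ or $V_2$, their intersection is exactly $W$, and the chain of triangle inequalities from Lemma \ref{dist:lemma:xVV} telescopes correctly without the intermediate distances $\dist(U_{i_1} \cap \cdots \cap U_{i_k}, W)$ — which are \emph{constants} depending only on the subspaces — causing trouble. Since those intermediate distances are fixed positive constants independent of $\ux$, and since $\dist(\ux, W) \le 1$ always, the bound \eqref{red:lemma3:eq} is automatic whenever $\max\{\dist(\ux, V_1), \dist(\ux, V_2)\}$ is bounded below by a fixed positive constant; so one only needs the pairwise estimate to be effective in the regime where $\ux$ is close to both $V_1$ and $V_2$, which the linear-algebra computation above handles cleanly. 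An alternative, perhaps cleaner, route avoiding the covering altogether: apply an invertible linear operator $T$ (Lemma \ref{red:lemma2}) putting $V_1, V_2$ in a normal form — say $V_1 = \langle \ue_1, \dots, \ue_p \rangle_\bR$ and $V_2 = \langle \ue_{q+1}, \dots, \ue_n \rangle_\bR$ with $W$ spanned by the overlap of index sets — where $\dist(\ux, V_1)$, $\dist(\ux, V_2)$, $\dist(\ux, W)$ become the normalized norms of explicit coordinate projections, and the inequality is then a one-line consequence of $\norm{\proj_{W^\perp}(\ux)}^2 \le \norm{\proj_{V_1^\perp}(\ux)}^2 + \norm{\proj_{V_2^\perp}(\ux)}^2$ (as the coordinates complementary to $W$ split into those complementary to $V_1$ and those complementary to $V_2$), absorbing the comparison constants of $T$ into $\kappa$.
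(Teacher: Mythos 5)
Your concluding ``alternative route'' is essentially the paper's own proof: there one fixes $T\in\GL_n(\bR)$ with $T(E_1)=V_1$ and $T(E_2)=V_2$ for the coordinate subspaces $E_1=\langle\ue_1,\dots,\ue_s\rangle_\bR$ and $E_2=\langle\ue_r,\dots,\ue_t\rangle_\bR$, uses Lemma \ref{red:lemma2} to absorb the comparison constants of $T$ into $\kappa$, and then reads off $\norm{\proj_{(E_1\cap E_2)^\perp}(\ux)}\le\norm{\proj_{E_1^\perp}(\ux)}+\norm{\proj_{E_2^\perp}(\ux)}$ from the explicit coordinate projections, the case $E_1\cap E_2=\{0\}$ giving instead $\norm{\ux}\le\norm{\proj_{E_1^\perp}(\ux)}+\norm{\proj_{E_2^\perp}(\ux)}$, which is exactly how the convention for the left-hand side is handled; this yields $\kappa=2$ for coordinate subspaces. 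Two remarks on your version. First, the normal form $V_2=\langle\ue_{q+1},\dots,\ue_n\rangle_\bR$ implicitly assumes $V_1+V_2=\bR^n$; you should allow $V_2=\langle\ue_{q+1},\dots,\ue_t\rangle_\bR$ with $t\le n$, after which the index sets complementary to $V_1$ and to $V_2$ merely cover (rather than partition) the set complementary to $W$, which is still enough for your inequality. Second, your primary route through hyperplanes is workable, but not via the chaining you sketch: the triangle inequalities from Lemma \ref{dist:lemma:xVV} involve the fixed quantities $\dist(U_{i_1}\cap\cdots\cap U_{i_k},W)$, which are not small and so give nothing; the clean form of that idea is to choose a basis of $W^\perp=V_1^\perp+V_2^\perp$ consisting of vectors of $V_1^\perp\cup V_2^\perp$ and apply your Gram/dual-basis bound to $\proj_{W^\perp}(\ux)$ in one step (no chaining, no restriction to a ``close to both'' regime), which also covers $W=\{0\}$ automatically. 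So your proposal is correct; its second route coincides with the paper's argument, while the first, once repaired as above, is a coordinate-free variant that trades the explicit constant $2$ for a Gram-matrix constant.
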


\begin{proof}
There are integers $r,s,t\in\{1,\dots,n\}$ with $r\le t$ and an invertible linear
operator $T\in\GL_n(\bR)$ such that $T(E_1)=V_1$ and $T(E_2)=V_2$, where
\[
 E_1=\langle\ue_1,\dots,\ue_s\rangle_\bR
 \et
 E_2=\langle\ue_r,\dots,\ue_t\rangle_\bR.
\]
So, by Lemma \ref{red:lemma2}, it suffices to prove the result for $V_1=E_1$
and $V_2=E_2$.  If $E_1\cap E_2\neq\{0\}$, we have $r\le s$.  Then, for each
non-zero $\ux=(x_1,\dots,x_n)\in\bR^n$, we find that
\begin{align*}
 \proj_{(E_1\cap E_2)^\perp}(\ux)&=(x_1,\dots,x_{r-1},x_{s+1},\dots,x_n),\\
 \proj_{E_1^\perp}(\ux)&=(x_{s+1},\dots,x_n),\\
 \proj_{E_2^\perp}(\ux)&=(x_1,\dots,x_{r-1},x_{t+1},\dots,x_n),
\end{align*}
thus  $\norm{ \proj_{(E_1\cap E_2)^\perp}(\ux)} \le \norm{\proj_{E_1^\perp}(\ux)}+\norm{\proj_{E_2^\perp}(\ux)}$, and so
\[
 \dist(\ux,E_1\cap E_2)\le \dist(\ux,E_1)+\dist(\ux,E_2).
\]
If $E_1\cap E_2=\{0\}$, then $s<r$ and we obtain
$\norm{\ux} \le \norm{\proj_{E_1^\perp}(\ux)}+\norm{\proj_{E_2^\perp}(\ux)}$,
which yields
\[
 1\le \dist(\ux,E_1)+\dist(\ux,E_2).
\]
So, in this situation, \eqref{red:lemma3:eq} holds with $\kappa=2$.
\end{proof}


\subsection*{Proof of Corollary \ref{intro:cor0}}
Let $n$, $m$, $\rho$ and $V$ be as in the statement of Theorem \ref{intro:thm1}.
For part 1), fix a point $\uu$ of $\bR^n$ with linearly independent coordinates
over $\bQ$, and real numbers $\delta,\epsilon>0$.  Setting $W=V\cap\uu^\perp$,
Lemma \ref{red:lemma3} provides a constant $\kappa>0$ such that
\[
 \dist(\ux,W)\le \kappa\max\{\dist(\ux,V),\dist(\ux,\uu^\perp)\},
\]
for any non-zero $\ux\in\bZ^n$.  Define $\delta'=\delta/\kappa$
and $\epsilon'=\min\{\epsilon,\delta'\norm{\uu}\}$.  Part 1) of Theorem
\ref{intro:thm1} provides a non-zero point $\ux\in\bZ^n$ with
\[
 \dist(\ux,V)\le \delta' \et |\ux\cdot\uu|\le\epsilon'\norm{\ux}^{-\rho}.
\]
Since $\dist(\ux,\uu^\perp)=|\ux\cdot\uu|/(\norm{\ux}\,\norm{\uu})
\le \epsilon'/\norm{\uu} \le \delta'$, this point $\ux$ satisfies
$\dist(\ux,W)\le\delta$ and $|\ux\cdot\uu|\le\epsilon\norm{\ux}^{-\rho}$,
as needed.  For part 2), the assertion of the corollary is clear because
$\dist(\ux,V)\le \dist(\ux,W)$ for any non-zero point $\ux\in\bR^n$ and any
non-zero subspace $W$ of $V$.

A similar argument allows to recover Theorem \ref{intro:thm1} from
its consequence provided by Corollary \ref{intro:cor0}.  This consequence
is thus an equivalent form of the theorem.


\subsection*{Proof of Corollary \ref{intro:cor1}}
Here we have $n\ge 3$.  To prove the first assertion of the corollary, fix a point
$\uu\in\bR^n$ with linearly independent coordinates over $\bQ$ and fix
$\epsilon>0$.  The subspace $V$ of $\bR^n$ given by
\eqref{intro:choixV:eq1} is defined over $\bQ$ of dimension $2$, thus
$W=V\cap\uu^\perp$ has dimension $1$ and so it is spanned by a unit
vector $\uw=(a,b,\dots,b)$ for some non-zero $a,b\in \bR$ with $b>0$.
Then Corollary \ref{intro:cor0} provides a non-zero point
$\ux=(x_1,\dots,x_n)\in\bZ^n$ with
\[
 \dist(\ux,\uw)\le b/4 \et |\ux\cdot\uu|\le \epsilon\norm{\ux}^{-\rho},
\]
where $\rho=\rho_1=\gamma$.
Replacing $\ux$ by $-\ux$ if necessary, we may further assume that
$\ux\cdot\uw\ge 0$.  Then, by Lemma \ref{dist:lemma1}, the difference
$\norm{\ux}^{-1}\ux-\uw$ has norm at most $b/2$ and so we obtain
$x_j\ge b\norm{\ux}/2>0$ for $j=2,\dots,n$.  Thus, the point $\ux$ satisfies
\eqref{intro:cor1:eq1}.

To prove the second assertion, fix an unbounded monotonically increasing function
$\psi$ from $[1,\infty)$ to $(0,\infty)$.  Assuming that part 2) of
Theorem \ref{intro:thm1} holds, there is a point $\tuu$ of $\bR^n$ with linearly
independent coordinates over $\bQ$ and a number $\tdelta>0$ for which at most
finitely many points $\tux\in\bZ^n$ satisfy
\begin{equation}
\label{red:proof:cor1:eq}
\dist(\tux,V)\le \tdelta
 \et
 \abs{\tux\cdot\tuu}\le \psi(\norm{\tux}^{1/2})^{-1/2}\norm{\tux}^{-\rho},
\end{equation}
with $V$ given by \eqref{intro:choixV:eq1}, and  $\rho=\rho_1=\gamma$.
Choose a positive  integer
$k$ such that $3/k\le \tdelta$ and form the map $T\in\GL_n(\bZ)$ defined,
for any $\ux=(x_1,\dots,x_n)\in\bR^n$, by
\[
 T(\ux)=(x_1,x_3+(k+1)\xbar,x_3+k\xbar,\dots,x_n+k\xbar)
 \quad\text{where}\quad \xbar=x_2+\dots+x_n.
\]
We claim that the point $\uu=\trT(\tuu)$ has the required properties.  Since
its coordinates are linearly independent over $\bQ$, we need to show that
there are at most finitely many points $\ux=(x_1,\dots,x_n)\in\bZ^n$ with
property \eqref{intro:cor1:eq2}, namely
\begin{equation*}
 x_2,\dots,x_n>0
 \et
 \abs{\ux\cdot\uu}\le \psi(\norm{\ux})^{-1}\norm{\ux}^{-\rho}
\end{equation*}
(as $\rho=\gamma$).  For such a point $\ux$, the integer $\xbar$ is positive
and, since $\uv=(x_1,k\xbar,\dots,k\xbar)$ belongs to $V$, we find
\[
 \dist(T(\ux),V)
  \le \frac{\norm{T(\ux)-\uv}}{\norm{T(\ux)}}
  = \frac{\norm{(0,x_3+\xbar,x_3,\dots,x_n)}}{\norm{T(\ux)}}
  \le \frac{3\xbar}{k\xbar} \le \tdelta.
\]
We also have $\norm{\ux}\le \norm{T(\ux)}\le\kappa\norm{\ux}$ where
$\kappa=n(k+1)$, thus
\begin{equation}
\label{red:proof:cor1:eq2}
 |T(\ux)\cdot\tuu|
   = |\ux\cdot\uu|
  \le \psi(\norm{\ux})^{-1}\norm{\ux}^{-\rho}
  \le \psi(\kappa^{-1}\norm{T(\ux)})^{-1}\kappa^\rho\norm{T(\ux)}^{-\gamma}.
\end{equation}
Thus, if the norm of $\ux$ is large enough, the point $\tux=T(\ux)$ satisfies
\eqref{red:proof:cor1:eq}.  So, $\tux$ and $\ux$ lie in finite sets.


\subsection*{Proof of Corollary \ref{intro:cor2}}
The argument is similar to the proof of the preceding corollary.  Let $V$ be the
subspace of $\bR^n$ defined by \eqref{intro:choixV:eq2} for the given integers
$m$ and $n$.  For the first assertion, fix $\delta,\epsilon>0$ and a point
$\uu=(u_1,\dots,u_n)\in\bR^n$ with linearly independent coordinates over
$\bQ$.  Since $u_1\neq 0$, there is a constant $\kappa>0$ depending on $\uu$
such that
\begin{equation}
 \label{red:proof:cor2:eq1}
 \norm{\ux} \le \kappa\max\{|\ux\cdot\uu|,|x_2|,\dots,|x_n|\}
\end{equation}
for each $\ux=(x_1,\dots,x_n)\in\bR^n$.  Set $\epsilon'=\min\{\epsilon,1/(2\kappa)\}$.
Part 1) of Theorem \ref{intro:thm1} provides a non-zero point $\ux$ of $\bZ^n$ with
\[
 \dist(\ux,V)\le \delta/\kappa
 \et
 |\ux\cdot\uu|\le \epsilon'\norm{\ux}^{-\rho}.
\]
For this point, we have $\kappa |\ux\cdot\uu| \le 1/2 <\norm{\ux}$. Thus, upon
writing $\ux=(x_1,\dots,x_n)$, estimate \eqref{red:proof:cor2:eq1} yields
\[
 \norm{\ux} \le \kappa\max\{|x_2|,\dots,|x_n|\}
\]
On the other hand, we have
\begin{equation}
 \label{red:proof:cor2:eq}
 \dist(\ux,V) = \frac{\norm{(x_{m+2},\dots,x_n)}}{\norm{\ux}}
  \ge \frac{\max\{|x_{m+2}|,\dots,|x_n|\}}{\norm{\ux}}.
\end{equation}
So, the point $\ux$ has the required property \eqref{intro:cor2:eq1}.

For the second assertion, fix an unbounded monotonically increasing function
$\psi$ from $[1,\infty)$ to $(0,\infty)$.
Part 2) of Theorem \ref{intro:thm1} provides a point $\tuu$ of $\bR^n$ with
linearly independent coordinates over $\bQ$ and a number $\tdelta>0$ for which
at most finitely many points $\tux\in\bZ^n$ satisfy \eqref{red:proof:cor1:eq}
for our current choice of $V$.
Set $k=\lceil \sqrt{n}/\tdelta\,\rceil$ and form the map $T\in\GL_n(\bR)$
defined, for any $\ux=(x_1,\dots,x_n)\in\bR^n$, by
\[
 T(\ux)=(kx_1,kx_2,\dots,kx_{m+1},x_{m+2},\dots,x_n).
\]
We claim that the point $\uu=\trT(\tuu)$ has the required properties.  To prove
this, suppose that a non-zero point $\ux=(x_1,\dots,x_n)$ of $\bZ^n$ satisfies
\eqref{intro:cor2:eq2}.  As in the proof of the second assertion of Corollary
\ref{intro:cor1}, we simply need to show that $\tux=T(\ux)\in\bZ^n$ satisfies
\eqref{red:proof:cor1:eq} when $\ux\in\bZ^n$ has sufficiently large norm.
The first condition in \eqref{intro:cor2:eq2} yields
\[
 \norm{(x_{m+2},\dots,x_n)}
  \le \sqrt{n}\max\{|x_1|,\dots,|x_n|\}
  =  \sqrt{n}\max\{|x_1|,\dots,|x_{m+1}|\},
\]
thus
\[
 \dist(T(\ux),V)=\frac{\norm{(x_{m+2},\dots,x_n)}}{\norm{T(\ux)}}
 \le \frac{\sqrt{n}\max\{|x_1|,\dots,|x_{m+1}|\}}{k\norm{(x_1,\dots,x_{m+1})}}
 \le \frac{\sqrt{n}}{k}
 \le \tdelta.
\]
Since $\norm{\ux}\le \norm{T(\ux)}\le k\norm{\ux}$, we also find that
\eqref{red:proof:cor1:eq2} holds with $\kappa=k$.  Thus $\tux=T(\ux)$
satisfies \eqref{red:proof:cor1:eq} if $\norm{\ux}$ is large enough.


\subsection*{Proof of Corollary \ref{intro:cor3}}
The first part of Corollary \ref{intro:cor2} shows that $\rho(\uu)\ge \rho_m$
for each point $\uu\in\bR^n$ with linearly independent coordinates over $\bQ$:
it suffices to choose $\delta=1/2$ and let $\epsilon$ tend to $0$.  Its second part, applied
with $\psi(t)=\log(t+1)$, provides a point $\uu$ with $\bQ$-linearly independent
coordinates and $\rho(\uu)\le \rho_m$, thus $\rho(\uu)=\rho_m$.  Since
$\rho(t\uu)=\rho(\uu)$ for any $t>0$, we may further choose $\uu$ of norm $1$.

By the above, the set $\cS$ of unit vectors $\uu\in\bR^n$ with $\bQ$-linearly independent
coordinates for which $\rho(\uu)=\rho_m$ is not empty.  To show that it is uncountable,
choose an arbitrary sequence $(\uu_i)_{i\ge 1}$ in $\cS$.  For each index $i$,
let $\psi_i\colon[1,\infty)\to(0,\infty)$ be the function given by
\[
 \psi_i(t)=\max\{ |\ux\cdot\uu_i|^{-1}\norm{\ux}^{-\rho_m}
     \,;\, \ux\in\cX \ \text{and}\ \norm{\ux}\le t\}
\]
where $\cX$ stands for the set of all non-zero $\ux\in\bZ^n$ satisfying the main condition
\eqref{intro:cor3:eq} of Corollary \ref{intro:cor3}.  Each $\psi_i$ is monotonically increasing
and, by Corollary \ref{intro:cor2}, it is unbounded.  Based on this, we construct
recursively a sequence of real numbers $(t_i)_{i\ge 0}$ by setting first $t_0=1$
and, for $i\ge 1$, by choosing $t_i\ge 2t_{i-1}$ such that
\[
 \min\{\psi_1(t_i),\dots,\psi_i(t_i)\}\ge (i+1)^2.
\]
Since this sequence is strictly increasing and unbounded, we obtain a monotonically
increasing function $\psi\colon[1,\infty)\to[1,\infty)$ by defining $\psi(t)=i$ when
$t\in[t_{i-1},t_i)$ for an integer $i\ge 1$.  This implies that
\[
 \psi(t)\le \sqrt{\psi_i(t)} \quad\text{for each $i\ge 1$ and each $t\ge t_i$}
\]
because, for $t\in[t_{j-1},t_j)$ with $j>i$, we have $\psi_i(t)\ge \psi_i(t_{j-1})\ge
j^2=\psi(t)^2$.  Let $\uu$ be as in the second part of Corollary \ref{intro:cor2}
for this choice of $\psi$ and let $\tuu=\kappa^{-1}\uu$ where $\kappa=\norm{\uu}$,
so that $\tuu$ is a unit vector in $\cS$.  We claim that $\tuu\neq\uu_i$ for each
$i\ge 1$.  Indeed, for a given $i\ge 1$, there are elements $\ux$ of $\cX$ of
arbitrarily large norm with
\[
 |\ux\cdot\uu_i| = \psi_i(\norm{\ux})^{-1} \norm{\ux}^{-\rho_m}.
\]
Choosing $\ux$ outside of the exceptional set for $\psi$, with $\norm{\ux}\ge t_i$
and $\psi_i(\norm{\ux})\ge\kappa^2$, this yields
\[
 |\ux\cdot\uu_i| \le \kappa^{-1}\psi_i(\norm{\ux})^{-1/2} \norm{\ux}^{-\rho_m}
  \le \kappa^{-1}\psi(\norm{\ux})^{-1} \norm{\ux}^{-\rho_m}
  < \kappa^{-1}|\ux\cdot\uu|
  = |\ux\cdot\tuu|,
\]
thus $\tuu\neq \uu_i$.  This shows that $(\uu_i)_{i\ge 1}$ does not exhaust
$\cS$.  As this is an arbitrary sequence in $\cS$, this set is uncountable.

%
%

\section{Proof of Theorem \ref{param:thm}}
\label{sec:cons}

Throughout this section, we fix a canvas ($(\ua^{(i)})_{0\le i<s}$,
$(k_i)_{0\le i<s}$, $(\ell_i)_{0\le i<s}$) of mesh $c>0$ and cardinality
$s\in\{\infty,1,2,\dots\}$, as in Definition \ref{param:def:canvas}.
We form its associated rigid
$n$-system $\uP=(P_1,\dots,P_n)\colon [q_0,\infty) \to\bR^n$, and we
denote by $(q_i)_{0\le i<s}$ its sequence of switch numbers, as in
Definition \ref{param:def:systems}.   We first establish a proposition which
exhibits the driving principle behind the original constructions
from \cite[Section 5]{Ro2015}.  Then, we use it to prove Theorem \ref{param:thm}
for a choice of directions $\uuv=(\uv_1,\dots,\uv_{n-1})$.
To simplify the writing, we set
\[
 A^{(i)}_j=\exp(a^{(i)}_j)=\exp(P_j(q_i))
 \quad (0\le i<s,\ 1\le j\le n).
\]
By condition (C1) in Definition \ref{param:def:canvas}, these numbers satisfy
\begin{equation}
\label{cons:eq:A}
 A_1^{(i)}\ge \exp(c) \et A_j^{(i)}\ge\exp(c)A_{j-1}^{(i)} \text{ if $j>1$.}
\end{equation}

The following proposition is inspired from \cite[Section 5]{Ro2015}, and
involves a parameter $t$ to allow a recursive construction of bases.

\begin{proposition}
\label{cons:prop1}
Let $t \in \{\infty,1,2,\dots\}$ with $t\le s$ and let $\theta\in(0,1]$.
Suppose that $c\ge\log(2)$ and that,
for each integer $i$ with $0\le i<t$, we have a basis
$\uux^{(i)}=(\ux_1^{(i)},\dots,\ux_n^{(i)})$ of $\bZ^n$ such that
\begin{itemize}
 \item[$1)$] $A^{(i)}_j \le \norm{\ux^{(i)}_j} \le 2A^{(i)}_j$
 \quad\text{for $j=1,\dots,n$,}\\
 \item[$2)$] $\disp \norm{\ux^{(i)}_1\wedge\cdots\wedge\widehat{\ux^{(i)}_m}
     \wedge\cdots\wedge\ux^{(i)}_n}
 \ge \frac{\theta}{2} \norm{\ux^{(i)}_1}\cdots\widehat{\norm{\ux^{(i)}_m}}
     \cdots\norm{\ux^{(i)}_n}$
 \quad for $m=k_i$ and $m=\ell_i$.
\end{itemize}
Suppose further that, when $1\le i<t$, we have
\begin{itemize}
 \item[$3)$] $(\ux^{(i)}_1,\dots,\widehat{\ux^{(i)}_{\ell_i}}, \dots, \ux^{(i)}_n)
   = (\ux^{(i-1)}_1,\dots,\widehat{\ux^{(i-1)}_{k_{i-1}}}, \dots, \ux^{(i-1)}_n)$,
 \item[$4)$] $\ux^{(i)}_{\ell_i}
   \in \ux^{(i-1)}_{k_{i-1}} +
    \langle
   \ux^{(i-1)}_1,\dots,\widehat{\ux^{(i-1)}_{k_{i-1}}}, \dots, \ux^{(i-1)}_{\ell_i}
   \rangle_\bZ$.
\end{itemize}
For each integer $i$ with $-1\le i<t$, let $\uu_i$ be a unit vector orthogonal to the subspace
\[
 U_i:=\begin{cases}
  \langle \ux^{(0)}_1,\dots,\ux^{(0)}_{n-1}\rangle_\bR
   &\text{if\/ $i=-1$,}\\[5pt]
  \langle \ux^{(i)}_1,\dots,\widehat{\ux^{(i)}_{k_i}}, \dots, \ux^{(i)}_n\rangle_\bR
   &\text{if\/ $0\le i<t$.}
  \end{cases}
\]
Then, for any $i,j\in\bZ$ with $-1\le i\le j<t$, we have
\begin{equation}
\label{cons:prop1:eq1}
 \dist(\uu_i,\uu_j)=\dist(U_i,U_j)\le 8\theta^{-2}\exp(-q_{i+1}).
\end{equation}
Finally, suppose that $t=s$.  Then, there is a unit vector $\uu$ in $\bR^n$ such that
\begin{equation}
\label{cons:prop1:eq2}
 \dist(\uu_i,\uu)\le 8\theta^{-2}\exp(-q_{i+1}) \quad
 \text{whenever $-1\le i<s$.}
\end{equation}
For any integers $i$, $j$ with $0\le i<s$ and $1\le j\le n$, and any $q \in [q_i,q_{i+1})$,
we also have
\begin{align}
 &-c_1 \le a_j^{(i)}-L_\uu(\ux_j^{(i)},q) \le c_2 &\text{if\/ $j\neq k_i$,}
 \label{cons:prop1:eq3}\\
 &-c_1 \le a_j^{(i)}+q-q_i-L_\uu(\ux_j^{(i)},q) \le c_2 &\text{if\/ $j=k_i$,}
 \label{cons:prop1:eq4}\\
 &L_{\uu,j}(q) \le L_{\uu,j}(\uux^{(i)},q) \le P_j(q)+c_1 \le L_{\uu,j}(q)+c_2
 \label{cons:prop1:eq5}
\end{align}
where $c_1=\log(32/\theta^2)$ and $c_2=nc_1+\log(n!)$.
\end{proposition}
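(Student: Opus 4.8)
The plan is to build the chain of estimates for the distances $\dist(U_i,U_j)$ by a telescoping argument, and then to extract the limiting vector $\uu$ by a Cauchy-sequence argument. The identity $\dist(\uu_i,\uu_j)=\dist(U_i,U_j)$ is immediate from Lemma \ref{dist:lemma:uU}, since $\uu_i$ and $\uu_j$ are unit normals to the codimension-$1$ subspaces $U_i$ and $U_j$. For the quantitative bound in \eqref{cons:prop1:eq1}, the key observation is that consecutive subspaces $U_i$ and $U_{i+1}$ share all but one of their spanning vectors. Indeed, for $0\le i<i+1<t$, hypothesis $3)$ tells us that $U_i$ and $U_{i+1}$ are spanned by the same $(n-1)$-tuple of vectors except that $\ux_{k_i}^{(i)}$ is replaced by $\ux_{\ell_{i+1}}^{(i+1)}$; moreover by $4)$ these two vectors agree modulo the span of the common ones, so actually $U_i$ and $U_{i+1}$ differ only in the vector $\ux_{\ell_{i+1}}^{(i+1)}$ versus $\ux_{k_i}^{(i)}$, and the \emph{wedge} of the common $n-2$ vectors appears in the numerator of the formula from Lemma \ref{dist:lemma:UU}. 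First I would apply Lemma \ref{dist:lemma:UU} to the basis $\uux^{(i+1)}$ with the pair of indices corresponding to $k_{i+1}$ and $\ell_{i+1}$ (using condition (C2) that $k_{i+1}<\ell_{i+1}$), obtaining
\[
 \dist(U_i,U_{i+1})
  = \frac{\norm{\ux^{(i+1)}_1\wedge\cdots\wedge\widehat{\ux^{(i+1)}_{k_{i+1}}}\wedge\cdots\wedge\widehat{\ux^{(i+1)}_{\ell_{i+1}}}\wedge\cdots\wedge\ux^{(i+1)}_n}}
       {\norm{\bigwedge_{j\neq k_{i+1}}\ux^{(i+1)}_j}\,\norm{\bigwedge_{j\neq \ell_{i+1}}\ux^{(i+1)}_j}}.
\]
Then I would bound the numerator above by the product of the relevant norms via Hadamard and the denominator below using hypothesis $2)$ with $m=k_{i+1}$ and $m=\ell_{i+1}$, so that each factor of $\theta/2$ in the denominator contributes a factor $2/\theta$. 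Using hypothesis $1)$ to replace each $\norm{\ux^{(i+1)}_j}$ by $A^{(i+1)}_j$ up to a factor $2$, and noting that $q_{i+1}=\sum_j a^{(i+1)}_j = \log\prod_j A^{(i+1)}_j$, the cancellation of all common factors leaves essentially $(A^{(i+1)}_{k_{i+1}}A^{(i+1)}_{\ell_{i+1}})^{-1}$ times the bookkeeping constants; bounding $A^{(i+1)}_{k_{i+1}}A^{(i+1)}_{\ell_{i+1}}$ below by $\exp(q_{i+2})$ up to the product of the remaining $A$'s (which is $\exp(q_{i+1}-a^{(i+1)}_{k_{i+1}}-a^{(i+1)}_{\ell_{i+1}})$) and tracking the powers of $2$ and $\theta$ gives a bound of the shape $C\theta^{-2}\exp(-q_{i+1})$. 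The case $i=-1$ is handled the same way, comparing $U_{-1}=\langle\ux^{(0)}_1,\dots,\ux^{(0)}_{n-1}\rangle_\bR$ with $U_0$ using $\ell_0=n$ and $k_0<n$ from (C2).

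Once the consecutive bound is in hand, the general estimate \eqref{cons:prop1:eq1} follows by the triangle inequality for the projective distance (Lemma \ref{dist:lemma:xVV}, via Lemma \ref{dist:lemma:uU}) summed along the chain $\uu_i,\uu_{i+1},\dots,\uu_j$: since $q_{\nu+1}\ge q_{\nu}+nc \ge q_\nu + \log(2)$ (because $c\ge\log 2$ and each coordinate increases by at least $c$ at a switch), the switch numbers grow at least geometrically by a ratio $\ge 2$, so $\sum_{\nu\ge i+1}\exp(-q_\nu)\le 2\exp(-q_{i+1})$, and the constant is absorbed into the stated $8\theta^{-2}$. For the existence of $\uu$ when $t=s=\infty$: the sequence $(\uu_i)$ is Cauchy in projective distance, hence—after choosing signs consistently so that consecutive inner products are nonnegative—converges to a unit vector $\uu$ by Lemma \ref{dist:lemma1}, and passing to the limit in the telescoped estimate gives \eqref{cons:prop1:eq2}. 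When $s<\infty$ one simply takes $\uu=\uu_{s-1}$.

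It remains to derive \eqref{cons:prop1:eq3}–\eqref{cons:prop1:eq5}. For a fixed $i$ with $0\le i<s$ and $q\in[q_i,q_{i+1})$, the trajectory value $L_\uu(\ux^{(i)}_j,q)=\max\{\log\norm{\ux^{(i)}_j},\,q+\log|\ux^{(i)}_j\cdot\uu|\}$ is controlled through two ingredients: the norm estimate $\log\norm{\ux^{(i)}_j}=a^{(i)}_j+O(\log 2)$ from $1)$, and a bound on $|\ux^{(i)}_j\cdot\uu|$ obtained by comparing $\uu$ with $\uu_i$. For $j\neq k_i$ the vector $\ux^{(i)}_j$ lies in $U_i=\uu_i^\perp$, so by \eqref{dist:lemma1:eq2} we get $|\ux^{(i)}_j\cdot\uu|\le 2\norm{\ux^{(i)}_j}\dist(\uu_i,\uu)\le 2\norm{\ux^{(i)}_j}\cdot 8\theta^{-2}\exp(-q_{i+1})$; multiplying by $e^q<e^{q_{i+1}}$ shows the second term in the max is at most a constant, so $L_\uu(\ux^{(i)}_j,q)=\log\norm{\ux^{(i)}_j}+O(1)$, which is \eqref{cons:prop1:eq3} after inserting the norm estimate. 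For $j=k_i$ one needs both the upper bound—$\ux^{(i)}_{k_i}\in U_{i-1}=\uu_{i-1}^\perp$ by condition $3)$ applied at step $i$ (or $U_{-1}$ when $i=0$), giving $|\ux^{(i)}_{k_i}\cdot\uu|\le 2\norm{\ux^{(i)}_{k_i}}\dist(\uu_{i-1},\uu)$, and here $\dist(\uu_{i-1},\uu)\le 8\theta^{-2}\exp(-q_i)$ so that $e^q|\ux^{(i)}_{k_i}\cdot\uu|\lesssim \norm{\ux^{(i)}_{k_i}}e^{q-q_i}$—and a matching \emph{lower} bound, which is where the real work lies. The lower bound on $|\ux^{(i)}_{k_i}\cdot\uu|$ should come from the fact that $\ux^{(i)}_{k_i}\notin U_i=\uu^\perp$ "in the limit": writing $\uu=\uu_i+(\uu-\uu_i)$ and using that $\ux^{(i)}_{k_i}\cdot\uu_i$ is, up to the volume normalizations controlled by $2)$, comparable to $\prod_{j}\norm{\ux^{(i)}_j}/\prod_{j\neq k_i}\norm{\ux^{(i)}_j}$—i.e.\ essentially $\norm{\ux^{(i)}_{k_i}}$ times $\Theta$-type factors bounded below by $\theta/2$—while the error $|\ux^{(i)}_{k_i}\cdot(\uu-\uu_i)|\le \norm{\ux^{(i)}_{k_i}}\dist(\uu_i,\uu)$ is by the previous chain much smaller (of order $\exp(-q_{i+1})$ relative to $\norm{\ux^{(i)}_{k_i}}$, hence negligible since $A^{(i)}_{k_i}\exp(q_i)$ is of strictly smaller order than $\exp(q_{i+1})$). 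This yields $|\ux^{(i)}_{k_i}\cdot\uu|\asymp \norm{\ux^{(i)}_{k_i}}\exp(-q_i)$ up to constants depending only on $\theta$ and $n$, i.e.\ $q+\log|\ux^{(i)}_{k_i}\cdot\uu| = a^{(i)}_{k_i}+q-q_i+O(1)$; since for $j=k_i$ and $q$ in this range this term dominates $\log\norm{\ux^{(i)}_{k_i}}\approx a^{(i)}_{k_i}\le a^{(i)}_{k_i}+q-q_i$, we get \eqref{cons:prop1:eq4}. Finally \eqref{cons:prop1:eq5} is obtained by sorting: \eqref{cons:prop1:eq3}–\eqref{cons:prop1:eq4} say that the multiset $\{L_\uu(\ux^{(i)}_j,q)\}_{j=1}^n$ equals $\{P_j(q)\}_{j=1}^n$ up to additive errors in $[-c_1,c_2]$ (recall $P_j(q)$ is the sorted version of $(a^{(i)}_1,\dots,\widehat{a^{(i)}_{k_i}},\dots,a^{(i)}_n,\,a^{(i)}_{k_i}+q-q_i)$), so after applying the sorting map $\Phi_n$—which is $1$-Lipschitz for the sup-norm—the componentwise inequalities $L_{\uu,j}(\uux^{(i)},q)\le P_j(q)+c_1$ and $P_j(q)\le L_{\uu,j}(\uux^{(i)},q)+c_2$ follow, the bound $L_{\uu,j}(q)\le L_{\uu,j}(\uux^{(i)},q)$ being just \eqref{param:eq:LL}; the accumulation of $n$ such errors plus the $\log(n!)$ coming from permuting indices is exactly what produces $c_2=nc_1+\log(n!)$. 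I expect the lower bound on $|\ux^{(i)}_{k_i}\cdot\uu|$ to be the main obstacle, since it requires carefully separating the "main term" $\ux^{(i)}_{k_i}\cdot\uu_i$ from the perturbation $\ux^{(i)}_{k_i}\cdot(\uu-\uu_i)$ and checking that the geometric decay of the switch numbers makes the perturbation genuinely smaller than the main term in this range of $q$.
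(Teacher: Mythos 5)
Your treatment of \eqref{cons:prop1:eq1} and \eqref{cons:prop1:eq2} is essentially the paper's argument (Lemma \ref{dist:lemma:UU} applied to a single basis expressing two consecutive hyperplanes, Hadamard plus hypothesis 2) in the denominator, telescoping with $q_{\nu}\ge q_{\nu-1}+\log 2$, then a Cauchy argument), apart from the harmless slip that the switch numbers increase by at least $c$, not $nc$. The genuine gap is in the second half. For the right-hand inequality of \eqref{cons:prop1:eq4} you propose a pointwise lower bound $|\ux^{(i)}_{k_i}\cdot\uu|\gtrsim\norm{\ux^{(i)}_{k_i}}\exp(-q_i)$ by separating the main term $\ux^{(i)}_{k_i}\cdot\uu_i$ from the perturbation $\ux^{(i)}_{k_i}\cdot(\uu-\uu_i)$. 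Two problems: first, since $\uux^{(i)}$ is a basis of $\bZ^n$ one has $\norm{\ux^{(i)}_1\wedge\cdots\wedge\ux^{(i)}_n}=1$, so $|\ux^{(i)}_{k_i}\cdot\uu_i|=1/\norm{\ux^{(i)}_1\wedge\cdots\wedge\widehat{\ux^{(i)}_{k_i}}\wedge\cdots\wedge\ux^{(i)}_n}$; it is \emph{not} comparable to $\norm{\ux^{(i)}_{k_i}}$ as you write in passing (you conflate the wedge norm of the full basis with the product of the norms), although your final asymptotic $\asymp\norm{\ux^{(i)}_{k_i}}\exp(-q_i)$ is of the right shape. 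Second, and more seriously, the perturbation is only bounded by $\norm{\ux^{(i)}_{k_i}}\cdot 8\theta^{-2}\exp(-q_{i+1})$ while the main term is only bounded below by roughly $2^{-(n-1)}\norm{\ux^{(i)}_{k_i}}\exp(-q_i)$; the hypotheses give merely $q_{i+1}-q_i\ge c\ge\log 2$, so the perturbation need not be smaller than the main term (your justification ``$A^{(i)}_{k_i}\exp(q_i)$ is of strictly smaller order than $\exp(q_{i+1})$'' is neither implied by the hypotheses nor the relevant comparison). Indeed the paper never proves such a pointwise bound in general: it is asserted only under the extra gap condition $q_{i+1}>q_i+\log 2+c_2$ (Corollary \ref{cons:prop1:cor}).

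The missing ingredient is Minkowski's second convex body theorem, which you never invoke and which is what actually produces the constant $c_2=nc_1+\log(n!)$. The paper only proves the componentwise \emph{upper} bounds $L_\uu(\ux^{(i)}_j,q)\le a^{(i)}_j+c_1$ (resp.\ $a^{(i)}_{k_i}+q-q_i+c_1$), i.e.\ $\ub\le\ua$, and then uses $L_{\uu,1}(q)+\cdots+L_{\uu,n}(q)\ge q-\log(n!)$ to bound the total slack by $nc_1+\log(n!)$, which yields all the right-hand inequalities of \eqref{cons:prop1:eq3}--\eqref{cons:prop1:eq4} and \eqref{cons:prop1:eq5} at once. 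Without such a lower bound on the (sum of the) true minima, your plan cannot deliver the last inequality $P_j(q)+c_1\le L_{\uu,j}(q)+c_2$ of \eqref{cons:prop1:eq5} at all: sorting your two-sided estimates only compares $P_j(q)$ with $L_{\uu,j}(\uux^{(i)},q)$, and the inequality $L_{\uu,j}(q)\le L_{\uu,j}(\uux^{(i)},q)$ from \eqref{param:eq:LL} points in the wrong direction --- nothing in your argument rules out integer points far smaller than the $\ux^{(i)}_j$ lying in $\cC_\uu(e^q)$. So either add the Minkowski step (as the paper does), or, for \eqref{cons:prop1:eq4}, split into the cases $q-q_i\le c_2$ (where the norm term suffices) and $q_{i+1}-q_i>c_2$ (where your perturbation comparison can be made quantitative); but even then the final inequality of \eqref{cons:prop1:eq5} still requires the volume/second-theorem input.
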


The last property \eqref{cons:prop1:eq5} shows that, for each $q$ in $[q_i,q_{i+1})$,
the basis $\uux^{(i)}$ realizes the minima of the convex body $\cC_\uu(e^q)$ up to
the factor $\exp(c_2)$.  The preceding properties \eqref{cons:prop1:eq3} and
\eqref{cons:prop1:eq4} provide estimates for the individual trajectories of the basis
elements $\ux_j^{(i)}$ over the interval $[q_i,q_{i+1})$, an information which is
partly lost in \eqref{cons:prop1:eq5}.
When $i=s-1<\infty$, we understand the right hand sides of \eqref{cons:prop1:eq1} and
\eqref{cons:prop1:eq2} as $8\theta^{-2}\exp(-\infty)=0$.

\begin{proof}
Fix an integer $i$ with $0\le i<t$.  Using relation 3) when $i\ge 1$ and the
hypothesis that $\ell_0=n$ when $i=0$, we find
\begin{equation}
\label{cons:prop1:proof:eq1}
 U_{i-1}
  =\langle \ux^{(i)}_1,\dots,\widehat{\ux^{(i)}_{\ell_i}}, \dots, \ux^{(i)}_n\rangle_\bR
 \et
 U_i
  = \langle \ux^{(i)}_1,\dots,\widehat{\ux^{(i)}_{k_i}}, \dots, \ux^{(i)}_n\rangle_\bR.
\end{equation}
Since $k_i<\ell_i$, Lemma \ref{dist:lemma:UU} gives
\[
 \dist(U_{i-1},U_i)
 = \frac{\norm{\ux^{(i)}_1\wedge\cdots\wedge\widehat{\ux^{(i)}_{k_i}}
    \wedge\cdots\wedge\widehat{\ux^{(i)}_{\ell_i}}\wedge\cdots\wedge\ux^{(i)}_n}}%
   {\norm{\ux^{(i)}_1\wedge\cdots\wedge\widehat{\ux^{(i)}_{k_i}}
    \wedge\cdots\wedge\ux^{(i)}_n}\,
   \norm{\ux^{(i)}_1\wedge\cdots\wedge\widehat{\ux^{(i)}_{\ell_i}}
    \wedge\cdots\wedge\ux^{(i)}_n}}.
\]
To get an upper bound for this ratio, we apply Hadamard's inequality on the numerator
and hypothesis 2) on each factor of the denominator.  Together with estimates 1),
this yields
\[
 \dist(U_{i-1},U_i)
 \le \frac{4\theta^{-2}}{\norm{\ux^{(i)}_1}\cdots\norm{\ux^{(i)}_n}}
 \le \frac{4\theta^{-2}}{A^{(i)}_1\cdots A^{(i)}_n}
  = 4\theta^{-2}\exp(-q_i).
\]
By Lemma \ref{dist:lemma:uU} and the fact that the distance satisfies the triangle inequality,
we deduce that, for any integers $i$, $j$ with $-1\le i<j<t$, we have
\begin{align*}
 \dist(U_{i},U_j)
 = \dist(\uu_{i},\uu_j)
 &\le \sum_{m=i+1}^j\dist(\uu_{m-1},\uu_m)\\
 &\le 4\theta^{-2}\sum_{m=i+1}^j\exp(-q_m)
  \le 8\theta^{-2}\exp(-q_{i+1}),
\end{align*}
where the last estimate uses the inequality $q_m\ge q_{m-1}+\log(2)$
for each integer $m$ with $1\le m<s$ coming from the hypothesis that the canvas
has mesh $c\ge \log(2)$.  This proves \eqref{cons:prop1:eq1} when $i<j$.  For $i=j$,
this inequality is automatic (it is even sharp when $i=j=s-1<\infty$ since in that case
$\exp(-q_{i+1})=\exp(-\infty)=0$.)

From now on, suppose that $t=s$.  If $s<\infty$, condition \eqref{cons:prop1:eq2}
is fulfilled with $\uu=\uu_{s-1}$ by \eqref{cons:prop1:eq1}.  If $s=\infty$,
then \eqref{cons:prop1:eq1} shows that the image of $(\uu_i)_{i\ge -1}$ in the
projective space over $\bR^n$ is a Cauchy sequence.  So, it converges to the
class of a unit vector $\uu$ of $\bR^n$ which satisfies \eqref{cons:prop1:eq2}.

Finally, fix an integer $i$ with $0\le i<s$, a number $q\in[q_i,q_{i+1})$, and
an index $j\in\{1,\dots,n\}$.  When $j\neq k_i$, we have $\ux_j^{(i)}\in U_i$, thus
$\ux_j^{(i)}\cdot\uu_i=0$ and Lemma \ref{dist:lemma1} yields
\[
 |\ux_j^{(i)}\cdot\uu| \le 2\norm{\ux_j^{(i)}}\dist(\uu_i,\uu).
\]
Using \eqref{cons:prop1:eq2} and the fact that $q<q_{i+1}$, we deduce that
\[
 q+\log|\ux_j^{(i)}\cdot\uu|
   \le c_0+\log\norm{\ux_j^{(i)}}-q_{i+1}+q
   \le c_0+\log\norm{\ux_j^{(i)}}
\]
where $c_0=\log(16/\theta^{2})$.  So, using hypothesis 1), we obtain
\[
 L_\uu(\ux_j^{(i)},q) \le c_0+\log\norm{\ux_j^{(i)}}
    \le c_1+a^{(i)}_j,
\]
which yields the left inequality in \eqref{cons:prop1:eq3}.
When $j=k_i$, we instead have $\ux_{k_i}^{(i)}\in U_{i-1}$ since
$k_i<\ell_i$ (even for $i=0$).   This means that  $\ux_{k_i}^{(i)}\cdot\uu_{i-1}=0$
and so
\[
 |\ux_{k_i}^{(i)}\cdot\uu| \le 2\norm{\ux_{k_i}^{(i)}}\dist(\uu_{i-1},\uu).
\]
Using \eqref{cons:prop1:eq2}, we deduce that
\[
 q+\log|\ux_{k_i}^{(i)}\cdot\uu|
   \le c_0+\log\norm{\ux_{k_i}^{(i)}}-q_{i}+q
\]
for $c_0$ as above.  Then, using the inequality $q\ge q_i$ and hypothesis 1), we obtain
\[
 L_\uu(\ux_{k_i}^{(i)},q) \le c_0+\log\norm{\ux_{k_i}^{(i)}} +q-q_i
    \le c_1+a^{(i)}_{k_i}+q-q_i,
\]
which gives the left inequality in \eqref{cons:prop1:eq4}.  By the above, the points
\[
 \ua=\ua^{(i)}+(q-q_i)\ue_{k_i}+c_1(1,\dots,1)
 \et
 \ub=(L_\uu(\ux^{(i)}_1,q),\dots,L_\uu(\ux^{(i)}_n,q))
\]
satisfy $\ub\le \ua$ for the componentwise ordering.  Since the map $\Phi$ is order
preserving, we deduce that $\Phi(\ub)\le \Phi(\ua)$ and so
\begin{equation}
\label{cons:prop1:proof:eq2}
 \uL_\uu(q) \le \uL_\uu(\uux^{(i)},q)=\Phi(\ub)\le \Phi(\ua)=\uP(q)+c_1(1,\dots,1).
\end{equation}
Since the coordinates of $\ua-\ub$ are non-negative, they are bounded above by their sum
$\Delta$ which is also the sum of the coordinates of $\Phi(\ua)-\Phi(\ub)$.  By
\eqref{cons:prop1:proof:eq2}, this is in turn bounded above by the sum of the coordinates
of $\uP(q)+c_1(1,\dots,1)-\uL_\uu(q)$ which is
\[
 \Delta':=\sum_{j=1}^n (P_j(q)+c_1-L_{\uu,j}(q)) = q + nc_1 - \sum_{j=1}^nL_{\uu,j}(q).
\]
However, Minkowski's second convex body theorem \cite[Chapter IV, Theorem 1A]{Sc1980}
applied to the convex body $\cC_\uu(e^q)$ gives
\[
 L_{\uu,1}(q)+\cdots+L_{\uu,n}(q) \ge \log(2^n/n!) -\log(\vol(\cC_\uu(e^q))) \ge q-\log(n!)
\]
using the crude upper bound $\vol(\cC_\uu(e^q)) \le 2^ne^{-q}$ for the volume of
$\cC_\uu(e^q)$.  So, we obtain that $\Delta\le \Delta'\le nc_1+\log(n!)=c_2$.
From this, we deduce that
$\ua\le \ub+c_2(1,\dots,1)$ which yields the right inequalities in \eqref{cons:prop1:eq3}
and \eqref{cons:prop1:eq4}.  It also gives
\[
 \uP(q)+c_1(1,\dots,1) \le \uL_\uu(q) + c_2(1,\dots,1),
\]
which, together with \eqref{cons:prop1:proof:eq2}, translates into \eqref{cons:prop1:eq5}.
\end{proof}

We will also need the following complement of information.

\begin{cor}
\label{cons:prop1:cor}
Suppose that Proposition \ref{cons:prop1} holds with $t=s$.  Then for each
integer $i$ with $0\le i<s$ such that $q_{i+1}>q_i+\log(2)+c_2$, we have
\begin{equation}
 \left|\log|\ux_{k_i}^{(i)}\cdot\uu| - a_{k_i}^{(i)} + q_i \right| \le c_2
 \label{param:prop1:cor:eq1}
\end{equation}
\end{cor}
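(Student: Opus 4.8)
The plan is to read off the corollary directly from the individual trajectory estimate \eqref{cons:prop1:eq4} already established in Proposition \ref{cons:prop1}; the only real work is to choose the free parameter $q$ in that estimate so that, in the maximum defining $L_\uu(\ux^{(i)}_{k_i},q)$, the second argument is the one attaining the maximum.

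Fix $i$ with $0\le i<s$ and $q_{i+1}>q_i+\log(2)+c_2$, and pick any real $q$ with $q_i+\log(2)+c_2<q<q_{i+1}$; such a $q$ exists by hypothesis. First I would note that hypothesis 1) of Proposition \ref{cons:prop1} applied with $j=k_i$, together with $A^{(i)}_{k_i}=\exp(a^{(i)}_{k_i})$, gives $\log\norm{\ux^{(i)}_{k_i}}\le a^{(i)}_{k_i}+\log(2)$. On the other hand, the left-hand inequality in \eqref{cons:prop1:eq4} (the case $j=k_i$) gives
\[
 L_\uu(\ux^{(i)}_{k_i},q)\ge a^{(i)}_{k_i}+q-q_i-c_2>a^{(i)}_{k_i}+\log(2)\ge \log\norm{\ux^{(i)}_{k_i}}.
\]
Since $L_\uu(\ux^{(i)}_{k_i},q)=\max\{\log\norm{\ux^{(i)}_{k_i}},\,q+\log\abs{\ux^{(i)}_{k_i}\cdot\uu}\}$ by \eqref{param:eq:Lx}, this strict inequality forces $\ux^{(i)}_{k_i}\cdot\uu\neq 0$ and $L_\uu(\ux^{(i)}_{k_i},q)=q+\log\abs{\ux^{(i)}_{k_i}\cdot\uu}$.

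Substituting this value of $L_\uu(\ux^{(i)}_{k_i},q)$ back into both inequalities of \eqref{cons:prop1:eq4} makes $q$ cancel, leaving
\[
 -c_1\le a^{(i)}_{k_i}-q_i-\log\abs{\ux^{(i)}_{k_i}\cdot\uu}\le c_2,
\]
that is, $\log\abs{\ux^{(i)}_{k_i}\cdot\uu}-a^{(i)}_{k_i}+q_i\in[-c_2,c_1]$. Finally, since $\theta\in(0,1]$ we have $c_1=\log(32/\theta^2)>0$, hence $c_2=nc_1+\log(n!)\ge c_1$, so this interval is contained in $[-c_2,c_2]$; this is exactly \eqref{param:prop1:cor:eq1}.

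I do not expect any genuine obstacle here. The one point that needs a moment's care is checking that the chosen $q$ makes $q+\log\abs{\ux^{(i)}_{k_i}\cdot\uu}$ strictly exceed $\log\norm{\ux^{(i)}_{k_i}}$ — which is precisely what the gap hypothesis $q_{i+1}>q_i+\log(2)+c_2$ is designed to guarantee — and everything else is bookkeeping with the bounds already in hand together with the trivial inequality $c_1\le c_2$.
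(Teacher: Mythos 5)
Your proof is correct and follows essentially the same route as the paper: choose $q$ with $q_i+\log(2)+c_2<q<q_{i+1}$, use \eqref{cons:prop1:eq4} together with hypothesis 1) for $j=k_i$ to force $L_\uu(\ux^{(i)}_{k_i},q)=q+\log\abs{\ux^{(i)}_{k_i}\cdot\uu}$, and substitute back (your explicit remark that $c_1\le c_2$ is a nice touch the paper leaves implicit). One tiny slip: the bound $L_\uu(\ux^{(i)}_{k_i},q)\ge a^{(i)}_{k_i}+q-q_i-c_2$ comes from the \emph{right-hand} inequality of \eqref{cons:prop1:eq4}, not the left-hand one, but the inequality you actually use is the correct one.
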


\begin{proof}
For such $i$, we apply \eqref{cons:prop1:eq4} for a choice of $q$ with
$q_i+\log(2)+c_2<q<q_{i+1}$. This gives
\[
 L_\uu(\ux^{(i)}_{k_i},q) > a_{k_i}^{(i)}+\log(2) \ge \log\norm{\ux^{(i)}_{k_i}},
\]
where the second inequality comes from condition 1) with $j=k_i$.  Thus,
by definition, we must have $L_\uu(\ux^{(i)}_{k_i},q)
=q+\log|\ux^{(i)}_{k_i}\cdot\uu|$, and \eqref{param:prop1:cor:eq1}
follows by substituting this expression into \eqref{cons:prop1:eq4}.
\end{proof}

In the constructions of \cite[Section 5]{Ro2015}, condition 2) in Proposition
\ref{cons:prop1} is fulfilled by asking that the sequences
$(\ux^{(i)}_1,\dots,\widehat{\ux^{(i)}_m},\dots,\ux^{(i)}_n)$ with $m=k_i$
or $m=\ell_i$ are almost orthogonal in the sense of \cite[Definition 4.5]{Ro2015}.
Here, we use a simpler but more narrow approach.

We fix a linearly independent $(n-1)$-tuple $\uuv=(\uv_1,\dots,\uv_{n-1})$ of
unit vectors of $\bR^n$ and, as in the statement of Theorem \ref{param:thm},
we denote by $(\uuv^{(i)})_{i\ge 0}$ the coherent sequence of directions for
$\uP$ attached to $\uuv$.  We also fix a parameter $\delta$ with
\begin{equation}
\label{cons:delta}
 0 < \delta \le \theta/(4n)
 \quad\text{where}\quad
 \theta=\norm{\uv_1\wedge\cdots\wedge\uv_{n-1}}=\Theta(\uuv),
\end{equation}
using notation \eqref{dist:Theta}.
Then, for each integer $i$ with $0\le i<t$, we ask that the basis $\uux^{(i)}$ satisfies
\begin{equation}
\label{cons:dxv}
 \dist(\ux_j^{(i)},\uv_j^{(i)}) \le \delta \quad \text{for $j=1,\dots,n$.}
\end{equation}
This is stronger than condition 2) of Proposition
\ref{cons:prop1} because, for $m=k_i$ or $m=\ell_i$, the $(n-1)$-tuple
$(\uv^{(i)}_1,\dots,\widehat{\uv^{(i)}_m},\dots,\uv^{(i)}_n)$ is a permutation
of $\uuv$.  So, if \eqref{cons:dxv} holds, then Lemma \ref{dist:lemma2} gives
\[
 \Theta(\ux^{(i)}_1,\dots,\widehat{\ux^{(i)}_m},\dots,\ux^{(i)}_n)
 \ge \Theta(\uuv)-2(n-1)\delta \ge\theta/2.
\]

We will show that the hypotheses of Theorem \ref{param:thm} allow us to
construct recursively a sequence of bases $(\uux^{(i)})_{0\le i<s}$ of $\bZ^n$
that satisfy conditions 1), 3), 4) of Proposition \ref{cons:prop1}, as well as
\eqref{cons:dxv} in replacement of condition 2).  Then this sequence and
the unit vector $\uu$ of $\bR^n$ provided by the proposition possess all the
required properties. Indeed, conditions 3) and 4) mean that the
sequence $(\uux^{(i)})_{0\le i<s}$ is coherent with $\uP$, while condition 1)
yields \eqref{param:thm:eq2}.  The other properties \eqref{param:thm:eq1},
\eqref{param:thm:eq3} and \eqref{param:thm:eq4} follow respectively
from \eqref{cons:dxv}, Corollary \ref{cons:prop1:cor} and
\eqref{cons:prop1:eq5}.

We start by constructing a basis $\uux^{(0)}$ of
$\bZ^n$ which satisfies the hypotheses of
Proposition \ref{cons:prop1} for $t=1$ and conditions
\eqref{cons:dxv} for $i=0$.

\begin{lemma}
\label{cons:lemma1}
If $A_1^{(0)}$ is large enough, with a lower bound
depending only on $\delta$ and $\uuv$, then there is a basis
$\uux^{(0)}=(\ux_1^{(0)},\dots,\ux_n^{(0)})$ of $\bZ^n$ such that
\[
 A_j^{(0)} \le \norm{\ux_j^{(0)}} \le 2A_j^{(0)}
 \et
 \dist(\ux_j^{(0)},\uv_j^{(0)})\le\delta/2
\]
for each $j=1,\dots,n$.  If $\uuv=(\ue_1,\dots,\ue_{n-1})$, it suffices to have
$A_1^{(0)}\ge 6/\delta$.
\end{lemma}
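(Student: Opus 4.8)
\emph{Overview of the plan.} I would first reduce to the special case $\uuv=(\ue_1,\dots,\ue_{n-1})$ and then build the basis by hand in that case. Recall that $\uuv^{(0)}=(\uv_1,\dots,\uv_{n-1},\uv_{k_0})$, so the required directions are $\uv_j^{(0)}=\uv_j$ for $j<n$ and $\uv_n^{(0)}=\uv_{k_0}$ with $k_0\le n-1$; in particular the $n$ target directions only span a hyperplane.

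\emph{Reduction.} By Lemma \ref{red:lemma1} there is $T\in\GL_n(\bZ)$ with $\dist(T(\ue_j),\uv_j)\le\delta/4$ for $j=1,\dots,n-1$; fix a constant $\kappa\ge 1$ for $T$ as in Lemma \ref{red:lemma2}. It then suffices to produce a basis $(\uw_1,\dots,\uw_n)$ of $\bZ^n$ with $\dist(\uw_j,\ue_j)\le\delta/(4\kappa)$ for $j<n$, with $\dist(\uw_n,\ue_{k_0})\le\delta/(4\kappa)$, and with $\norm{\uw_j}$ in a prescribed narrow window around $A_j^{(0)}/\norm{T(\ue_j)}$ (resp.\ around $A_n^{(0)}/\norm{T(\ue_{k_0})}$ for $j=n$). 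Indeed, $\ux_j^{(0)}:=T(\uw_j)$ then works: Lemma \ref{red:lemma2} and the triangle inequality for $\dist$ give $\dist(\ux_j^{(0)},\uv_j^{(0)})\le\kappa\dist(\uw_j,\ue_j)+\delta/4\le\delta/2$ (with $\ue_j$ replaced by $\ue_{k_0}$ when $j=n$), while $\uw_j$ being nearly a positive scalar multiple of $\ue_j$ (resp.\ $\ue_{k_0}$) forces $\norm{T(\uw_j)}=\norm{\uw_j}\,\norm{T(\ue_j)}\,(1+O(\delta/\kappa))$, which lies in $[A_j^{(0)},2A_j^{(0)}]$ once $A_1^{(0)}$ is large in terms of $\kappa$ and $\max_j\norm{T(\ue_j)}$, hence in terms of $\delta$ and $\uuv$. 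So from now on assume $\uuv=(\ue_1,\dots,\ue_{n-1})$, $\uv_j^{(0)}=\ue_j$ for $j<n$, $\uv_n^{(0)}=\ue_{k_0}$, and aim for the explicit bound $A_1^{(0)}\ge 6/\delta$.

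\emph{The coordinate case.} Put $N_j=\lceil A_j^{(0)}\rceil$ and let $j_1<\dots<j_{n-2}$ list the elements of $\{1,\dots,n-1\}\setminus\{k_0\}$. Define
\[
 \ux_{j_l}^{(0)}=N_{j_l}\ue_{j_l}+\ue_{j_{l+1}}\ (1\le l<n-2),\qquad
 \ux_{j_{n-2}}^{(0)}=N_{j_{n-2}}\ue_{j_{n-2}}+\ue_n,\qquad
 \ux_{k_0}^{(0)}=N_{k_0}\ue_{k_0}+\ue_{j_1},
\]
and $\ux_n^{(0)}=q\,\ue_{k_0}+r\,\ue_{j_1}$, where $q$ is chosen in $[A_n^{(0)},2A_n^{(0)})$ with $q\equiv-1\pmod{N_{k_0}}$ — possible since $N_{k_0}\le A_{n-1}^{(0)}+1$ is small next to $A_n^{(0)}\ge e^cA_{n-1}^{(0)}$ — and $r=(q+1)/N_{k_0}\in\bZ$. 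These are $n$ integer vectors forming a basis of $\bZ^n$: expanding $\det[\,\ux_1^{(0)}\,|\cdots|\,\ux_n^{(0)}\,]$ successively along rows $n,j_{n-2},j_{n-3},\dots,j_2$, each of those rows has, among the remaining columns, a single non-zero entry, equal to $1$ and coming respectively from $\ux_{j_{n-2}}^{(0)},\ux_{j_{n-3}}^{(0)},\dots,\ux_{j_1}^{(0)}$, so the determinant reduces to $N_{k_0}r-q=1$. The distance bounds are immediate: $\dist(\ux_{j_l}^{(0)},\ue_{j_l})=(N_{j_l}^2+1)^{-1/2}\le 1/A_{j_l}^{(0)}\le\delta/6$, $\dist(\ux_{k_0}^{(0)},\ue_{k_0})\le 1/N_{k_0}\le\delta/6$, and $\dist(\ux_n^{(0)},\ue_{k_0})=|r|(q^2+r^2)^{-1/2}\le r/q\le 2/N_{k_0}\le\delta/3$. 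For the norms, $\norm{\ux_{j_l}^{(0)}}=(N_{j_l}^2+1)^{1/2}\in[A_{j_l}^{(0)},A_{j_l}^{(0)}+2]\subseteq[A_{j_l}^{(0)},2A_{j_l}^{(0)}]$, likewise for $\ux_{k_0}^{(0)}$, and $A_n^{(0)}\le q\le\norm{\ux_n^{(0)}}=q(1+(r/q)^2)^{1/2}<2A_n^{(0)}$ because $r/q\le 2/N_{k_0}$ is tiny, choosing $q$ slightly below $2A_n^{(0)}$ (using again the rapid growth of the $A_j^{(0)}$). In the reduction step one runs the identical construction with targets proportional to $A_j^{(0)}/\norm{T(\ue_j)}$, and picks $q\equiv-1\pmod{N_{k_0}}$ \emph{closest} to a designated value so that $\norm{\uw_n}$ also falls in a narrow window.

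\emph{Where the difficulty lies.} The one genuine obstacle is unimodularity. The ``target'' matrix $[\,A_1^{(0)}\ue_1\,|\cdots|\,A_{n-1}^{(0)}\ue_{n-1}\,|\,A_n^{(0)}\ue_{k_0}\,]$ is \emph{singular} — its last column is a scalar multiple of its $k_0$-th — so a nearby invertible integer matrix cannot be obtained by a naive diagonal-type perturbation: any entrywise-small perturbation still has determinant divisible by a product of the large coordinates $N_j$. The construction above circumvents this by funnelling every coordinate axis except $\ue_{k_0}$ and one auxiliary axis $\ue_{j_1}$ through a single $\pm1$ off-diagonal entry, so the determinant telescopes down to the $2\times2$ B\'ezout block of the near-parallel pair $(\ux_{k_0}^{(0)},\ux_n^{(0)})$; the $\approx A_n^{(0)}$ admissible values for the leading coordinate $q$ of $\ux_n^{(0)}$ are exactly what let us impose $q\equiv-1\pmod{N_{k_0}}$, making that block unimodular at a cost of only $O(1/N_{k_0})$ in the angle between $\ux_n^{(0)}$ and $\ue_{k_0}$. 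A secondary fussy point is the norm accounting in the reduction, where the distortion by $T$ is absorbed because the coordinate-case construction hits its norm targets within a relative window of size $O(1/A_1^{(0)})$, negligible once $A_1^{(0)}$ dominates $\kappa$.
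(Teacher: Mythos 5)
Your construction is correct in substance, but it follows a genuinely different route from the paper, and it is worth seeing what each buys. The paper also starts from Lemma \ref{red:lemma1}, but instead of conjugating a coordinate model by $T$, it works intrinsically: it takes a basis $(\ux_1,\dots,\ux_n)$ of $\bZ^n$ with $\dist(\ux_j,\uv_j)\le\delta/6$, sets $A=\max_j\norm{\ux_j}$, and defines $\ux_j^{(0)}=a_j\ux_j+\ux_{j+1}$ for $j<n$ and $\ux_n^{(0)}=a\,\ux_{k_0}^{(0)}+\ux_1$, where $a_j,a$ are the smallest non-negative integers pushing the norm past $A_j^{(0)}$ (resp.\ $A_n^{(0)}$). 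Unimodularity is then automatic, since each step is an elementary column operation on a basis of $\bZ^n$; the ``singular target matrix'' obstacle you rightly identify never arises, because $\ux_n^{(0)}$ differs from a multiple of $\ux_{k_0}^{(0)}$ by the basis vector $\ux_1$ rather than by a small multiple of a nearby direction. This avoids Lemma \ref{red:lemma2} entirely, so no distortion constant $\kappa$ enters the norm accounting, and the bound $A_1^{(0)}\ge 6/\delta$ in the coordinate case falls out with no case analysis. Your approach pays for its explicitness in three places, all repairable: (i) in the reduction, centring the window for $\norm{\uw_j}$ exactly at $A_j^{(0)}/\norm{T(\ue_j)}$ is not quite enough, since the distortion by $T$ of the small off-axis component can push $\norm{T(\uw_j)}$ slightly \emph{below} $A_j^{(0)}$; you must aim the window strictly above the threshold (say near $\tfrac32 A_j^{(0)}/\norm{T(\ue_j)}$) before absorbing the $O(\kappa^2/A_1^{(0)})$ relative error; (ii) your coordinate construction needs the auxiliary index $j_1$, hence $n\ge 3$; for $n=2$ (allowed in Theorem \ref{param:thm}) one must use $\ue_n$ in place of $\ue_{j_1}$, a trivial patch; (iii) the existence of $q\equiv-1\pmod{N_{k_0}}$ in a slightly shrunken interval inside $[A_n^{(0)},2A_n^{(0)})$ needs $A_n^{(0)}$ to exceed $N_{k_0}$ with some room, which holds when $e^c\ge 2$ --- the only regime in which the lemma is invoked, and an assumption the paper's own proof uses implicitly as well (via $A_n^{(0)}\ge 2A_{k_0}^{(0)}$). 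None of these is a genuine gap, but tightening (i) is necessary for the statement $A_j^{(0)}\le\norm{\ux_j^{(0)}}$ in the general case.
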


\begin{proof}
By Lemma \ref{red:lemma1}, there is a basis $(\ux_1,\dots,\ux_n)$
of $\bZ^n$ such that $\dist(\ux_j,\uv_j)\le \delta/6$ for each $j=1,\dots,n-1$.
Let $A=\max\{\norm{\ux_1},\dots,\norm{\ux_{n}}\}$ and suppose that
$A_1^{(0)}\ge 6A/\delta$.  For $j=1,\dots,n-1$, we define
\[
 \ux^{(0)}_j=a_j\ux_j+\ux_{j+1}
\]
where $a_j$ is the smallest non-negative integer for which $\norm{\ux_j^{(0)}}
\ge A_j^{(0)}$.  Then $(\ux_1,\ux^{(0)}_1,\dots,\ux^{(0)}_{n-1})$ is a basis of
$\bZ^n$ and, since $k_0<n$, we obtain another basis
$\uux^{(0)}=(\ux_1^{(0)},\dots,\ux_n^{(0)})$ of $\bZ^n$ by setting
\[
  \ux^{(0)}_n=a\ux^{(0)}_{k_0}+\ux_1
\]
where $a$ is the smallest non-negative integer for which $\norm{\ux_n^{(0)}}
\ge A_n^{(0)}$.

For $j=1,\dots,n-1$, we have $A_j^{(0)}\ge A_1^{(0)}\ge 6A$,
thus $a_j\ge 1$ and so
\[
 \norm{\ux_j^{(0)}}\le A_j^{(0)}+A \le 2A_j^{(0)}.
\]
Since $\ux^{(0)}_j\wedge\ux_j=\ux_{j+1}\wedge\ux_j$, we also find
\[
 \dist(\ux^{(0)}_j,\ux_j)
 =\frac{ \norm{\ux_{j+1}\wedge\ux_j} }{ \norm{\ux^{(0)}_j}\,\norm{\ux_j} }
 \le \frac{ \norm{\ux_{j+1}} }{ \norm{\ux^{(0)}_j} }
 \le \frac{A}{A^{(0)}_1}
 \le \frac{\delta}{6}.
\]
So the triangle inequality yields
\[
 \dist(\ux^{(0)}_j,\uv^{(0)}_j)
  = \dist(\ux^{(0)}_j,\uv_j)
 \le \dist(\ux^{(0)}_j,\ux_j) + \dist(\ux_j,\uv_j)
 \le \delta/6 + \delta/6 =\delta/3.
\]
Similarly, since $A^{(0)}_n \ge 2A^{(0)}_{k_0}\ge 12A$, the integer $a$ is positive and so
\[
 \norm{\ux_n^{(0)}}\le A_n^{(0)}+2A_{k_0}^{(0)} \le 2A_n^{(0)}.
\]
Arguing as above, we also find
\[
 \dist(\ux^{(0)}_n,\ux^{(0)}_{k_0})
 \le \frac{ \norm{\ux_1} }{ \norm{\ux^{(0)}_n} }
 \le \frac{A}{A^{(0)}_n}
 \le \frac{\delta}{12},
\]
and so
\begin{align*}
 \dist(\ux^{(0)}_n,\uv^{(0)}_n)
  = \dist(\ux^{(0)}_n,\uv_{k_0})
 &\le \dist(\ux^{(0)}_n,\ux^{(0)}_{k_0}) + \dist(\ux^{(0)}_{k_0},\uv_{k_0})\\
 &\le \delta/12 + \delta/3 <\delta/2.
\end{align*}
If $\uuv=(\ue_1,\dots,\ue_{n-1})$, we can choose $(\ux_1,\dots,\ux_n) =
(\ue_1,\dots,\ue_n)$.  Then, we have $A=1$, and the above construction
requires only that $A_1^{(0)}\ge 6/\delta$.
\end{proof}

In view of the comments which precede Lemma \ref{cons:lemma1}, the next
two propositions complete the proof of Theorem \ref{param:thm},
depending on which of the two conditions \eqref{param:thm:cond1} or
\eqref{param:thm:cond2} holds.

\begin{proposition}
\label{cons:prop:cas1}
Suppose that condition \eqref{param:thm:cond1} holds, namely that
\begin{equation*}
 c\ge \log(8/\delta) \et \ell_i=n \quad\text{whenever $0\le i<s$,}
 \label{cons:prop:cas1:eq1}
\end{equation*}
and that $A_1^{(0)}$ is large enough as a function of $\uuv$ and $\delta$.
Then, we can construct recursively a sequence of bases $(\uux^{(i)})_{0\le i<s}$
of $\bZ^n$ which, for each $i$, satisfies conditions 1) to 4) of Proposition
\ref{cons:prop1} as well as \eqref{cons:dxv}.
If $\uuv=(\ue_1,\dots,\ue_{n-1})$, it suffices to have $A_1^{(0)}\ge 6/\delta$.
\end{proposition}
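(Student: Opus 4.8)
The plan is to build the bases $\uux^{(i)}$ by induction on $i$, arranging that at each stage conditions 1), 3), 4) of Proposition \ref{cons:prop1} and \eqref{cons:dxv} all hold. Since \eqref{cons:dxv} implies condition 2) of Proposition \ref{cons:prop1} via Lemma \ref{dist:lemma2} and $\delta\le\theta/(4n)$ (the normalized volumes in question stay $\ge\theta/2$), the unit vector $\uu$ and every remaining assertion of Theorem \ref{param:thm} then follow directly from that proposition, exactly as explained in the paragraph preceding Lemma \ref{cons:lemma1}. For $i=0$ we take the basis $\uux^{(0)}$ supplied by Lemma \ref{cons:lemma1}, which (since $A_1^{(0)}$ is large enough) satisfies $A_j^{(0)}\le\norm{\ux_j^{(0)}}\le 2A_j^{(0)}$ and $\dist(\ux_j^{(0)},\uv_j^{(0)})\le\delta/2$ for all $j$, conditions 3), 4) being vacuous. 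For the inductive step, assume $\uux^{(i-1)}$ constructed. Since $\ell_i=n$, condition 3) forces $(\ux_1^{(i)},\dots,\ux_{n-1}^{(i)})=(\ux_1^{(i-1)},\dots,\widehat{\ux_{k_{i-1}}^{(i-1)}},\dots,\ux_n^{(i-1)})$, and for these $n-1$ vectors condition 1) and \eqref{cons:dxv} are inherited from step $i-1$ together with the matching identity for the coherent sequence of directions; so the whole of the work is the choice of $\ux_n^{(i)}$.

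By condition 4) and $\ell_i=n$, $\ux_n^{(i)}$ is confined to the coset $\ux_{k_{i-1}}^{(i-1)}+\langle\ux_1^{(i)},\dots,\ux_{n-1}^{(i)}\rangle_\bZ$, while by Definition \ref{param:def:directions} its target direction is $\uv_n^{(i)}=\uv_{k_i}^{(i)}$, which, as $k_i<n$, is the direction of one of the vectors $\ux_1^{(i)},\dots,\ux_{n-1}^{(i)}$ already at hand. I would therefore set
\[
 \ux_n^{(i)}=\ux_{k_{i-1}}^{(i-1)}+t\,\ux_{k_i}^{(i)},
\]
choosing $t\in\bZ$ of least absolute value, with a sign making $\ux_n^{(i)}$ a nonnegative combination of $\ux_{k_{i-1}}^{(i-1)}$ and $\pm\ux_{k_i}^{(i)}$, for which $\norm{\ux_n^{(i)}}\ge A_n^{(i)}$. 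Here the hypotheses enter decisively: from $\ell_j=n$ throughout and condition (C3) one gets $a_n^{(j)}\ge a_n^{(j-1)}+c$, hence, using $a_n^{(i)}=a_{k_{i-1}}^{(i-1)}+(q_i-q_{i-1})$ and $k_{i-1}\le n-1$, that $q_i-q_{i-1}\ge 2c$; combined with $c\ge\log(8/\delta)$ and condition 1) at step $i-1$ this yields $\norm{\ux_{k_{i-1}}^{(i-1)}}\le 2e^{-2c}A_n^{(i)}\le(\delta^2/32)A_n^{(i)}$ and $\norm{\ux_{k_i}^{(i)}}\le 2e^{-c}A_n^{(i)}\le(\delta/4)A_n^{(i)}$. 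Thus a valid $t$ exists, necessarily $|t|\ge 2$, and minimality of $|t|$ gives $\norm{\ux_n^{(i)}}<A_n^{(i)}+\norm{\ux_{k_i}^{(i)}}\le 2A_n^{(i)}$, so condition 1) holds for $\ux_n^{(i)}$ as well; conditions 3), 4) hold by construction, and $\uux^{(i)}$ is again a $\bZ$-basis by the remark after Definition \ref{param:def:bases}. In the case $\uuv=(\ue_1,\dots,\ue_{n-1})$ one has $\theta=1$ and Lemma \ref{cons:lemma1} needs only $A_1^{(0)}\ge 6/\delta$.

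The remaining point, which I expect to be the main obstacle, is \eqref{cons:dxv} for $j=n$, i.e.\ $\dist(\ux_n^{(i)},\uv_n^{(i)})\le\delta$. From $\ux_n^{(i)}\wedge\ux_{k_i}^{(i)}=\ux_{k_{i-1}}^{(i-1)}\wedge\ux_{k_i}^{(i)}$, Hadamard's inequality, and the triangle inequality one obtains $\dist(\ux_n^{(i)},\uv_n^{(i)})\le\norm{\ux_{k_{i-1}}^{(i-1)}}/\norm{\ux_n^{(i)}}+\dist(\ux_{k_i}^{(i)},\uv_{k_i}^{(i)})\le \delta^2/32+\dist(\ux_{k_i}^{(i)},\uv_{k_i}^{(i)})$; the trouble is that $\ux_{k_i}^{(i)}$ was itself produced at an earlier stage, so iterating this estimate threatens an accumulation of error over the whole (possibly infinite) history of the direction $\uv_n^{(i)}$, and it is precisely controlling this accumulation that is the technical heart of the proof. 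The tools I would use are: first, the identity $\uv_{n-1}^{(i)}=\uv_n^{(i-1)}=\uv_{k_{i-1}}^{(i-1)}$ (again a consequence of $\ell_i=n$), which shows that whenever $k_i=n-1$ the offset $\ux_{k_{i-1}}^{(i-1)}$ and the multiplier $\ux_{k_i}^{(i)}$ both lie in the cone of half-angle $\le\delta$ about $\uv_n^{(i)}$, so that the nonnegative combination $\ux_n^{(i)}$ stays in that cone and no fresh error is incurred at such steps, by Lemma \ref{dist:lemma1}; second, that at each remaining step the fresh error is only $2e^{-(q_i-q_{i-1})}\le 2e^{-2c}$, and that the rapid growth $a_n^{(j)}\ge a_n^{(j-1)}+c$ makes the norms along the history of any fixed direction increase geometrically, so that these contributions, suitably telescoped against the initial slack $\delta/2$ coming from Lemma \ref{cons:lemma1}, keep the running total below $\delta$. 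Granting this estimate, the induction closes and the sequence $(\uux^{(i)})_{0\le i<s}$ has all the properties required by the proposition.
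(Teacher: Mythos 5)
Your reduction of the problem to the choice of $\ux_n^{(i)}$, the base case via Lemma \ref{cons:lemma1}, and the norm estimate for $\ux_n^{(i)}$ are all fine and close to the paper; but the one estimate you defer, namely \eqref{cons:dxv} for the new vector, is precisely the heart of the matter, and the mechanism you sketch for it does not close. With the single-multiplier choice $\ux_n^{(i)}=\ux_{k_{i-1}}^{(i-1)}+t\,\ux_{k_i}^{(i)}$ (which is exactly the construction the paper uses for Proposition \ref{cons:prop:cas2}), the only bound you can propagate is
$\dist(\ux_n^{(i)},\uv_n^{(i)})\le \dist(\ux_{k_i}^{(i)},\uv_{k_i}^{(i)})+2e^{-(q_i-q_{i-1})}$ at every step with $k_i\le n-2$, and under \eqref{param:thm:cond1} the gaps $q_i-q_{i-1}$ are only known to be $\ge 2c$; they need not grow, so the fresh contributions along an inheritance chain need not be summable. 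Your two rescue devices do not prevent this: the convex-cone (max) rule applies only at steps with $k_i=n-1$, and the "geometric growth of norms along the history" is irrelevant because the per-link error is the ratio $\norm{\ux_{k_{i-1}}^{(i-1)}}/\norm{\ux_n^{(i)}}$ of two consecutive sizes, which stays of order $e^{-2c}$ no matter how large the chain vectors become. Concretely, for $n=3$ take the canvas with $\ua^{(0)}=(Kc,(K+1)c,(K+2)c)$, $k_i$ alternating $2,1,2,1,\dots$, $\ell_i=3$, and the top coordinate increased by exactly $c$ at each step; this satisfies \eqref{param:thm:cond1}, every transition into a step with $k_{i+1}=1$ has gap exactly $2c$, and the multiplier at every second such step is a previously constructed chain vector, so the bound you can prove on the angular error grows like $\delta/2+t\cdot 2e^{-2c}$ after $t$ cycles and exceeds $\delta$ once $t\gg 1/\delta$. (It is exactly to exclude this accumulation that condition \eqref{param:thm:cond2} imposes $\sum_i e^{q_{i-1}-q_i}<\delta/4$ in the other case; under \eqref{param:thm:cond1} no such summability is available, so transplanting the cas-2 construction leaves a genuine gap.)

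The paper's proof of this proposition avoids any dependence on the angular errors of previously built vectors, so nothing accumulates. The new vector is not taken as offset plus a multiple of a single basis vector: one uses the whole coset $\ux_h^{(t-1)}+\langle \ux_1^{(t-1)},\dots,\widehat{\ux_h^{(t-1)}},\dots,\ux_n^{(t-1)}\rangle_\bZ$ (with $h=k_{t-1}$) and rounds the real coefficients in $r_0\uu_{t-1}+\tfrac32 A\,\uv=\ux_h^{(t-1)}+\sum_{j\ne h}r_j\ux_j^{(t-1)}$, where $\uv$ is a unit vector of the hyperplane $U_{t-1}$ closest to the target direction $\uv_n^{(t)}$ and $A=A_n^{(t)}$. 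The rounding error is at most $1+\tfrac12\sum_{j\ne h}\norm{\ux_j^{(t-1)}}\le 2A_n^{(t-1)}\le e^{-c}A\le (\delta/8)A$, giving $\dist(\ux_n^{(t)},\uv)\le\delta/4$, while $\dist(\uv_n^{(t)},U_{t-1})\le 3\delta/4$ uniformly in $t$, because the initial basis gives $\dist(\uv_j,U_{-1})\le\delta/2$ and \eqref{cons:prop1:eq1} bounds the total drift $\dist(U_{-1},U_{t-1})\le 8\theta^{-2}e^{-q_0}\le\delta/4$ using only the norm and volume conditions of Proposition \ref{cons:prop1}, not the angular errors. Thus each step's error is bounded by $\delta/4+3\delta/4=\delta$ afresh, with no history entering; this uniformity is the idea your argument is missing.
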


\begin{proof}
If $A_1^{(0)}$ is large enough, Lemma \ref{cons:lemma1} provides a basis
$\uux^{(0)}$ which satisfies these conditions for $i=0$.  If $s=1$, we are done.
Otherwise, suppose that we have constructed appropriate bases
$\uux^{(0)},\dots,\uux^{(t-1)}$ of $\bZ^n$ for some integer $t$ with $1\le t<s$.
For each $j=1,\dots,n-1$, we have $\ux_j^{(0)}\in U_{-1}$ and so
\[
 \dist(\uv_j,U_{-1})
  = \dist(\uv_j^{(0)},U_{-1})
  \le \dist(\uv_j^{(0)},\ux_j^{(0)})\le \delta/2.
\]
Using \eqref{cons:prop1:eq1} and the fact that $q_0\ge 2a_1^{(0)}$,
we also find
\[
 \dist(U_{-1},U_{t-1})
  \le 8\theta^{-2}\exp(-q_0)\le 8\theta^{-2}(A_1^{(0)})^{-2}\le \delta/{4}
\]
if $A_1^{(0)}\ge 6/(\theta\delta)$.  Assuming this, we deduce that
\[
 \dist(\uv_j,U_{t-1})
  \le \dist(\uv_j,U_{-1})+\dist(U_{-1},U_{t-1})\le 3\delta/4
\]
for $j=1,\dots,n-1$.  Finally, since  $\uv_n^{(t)}$ is one of the points
$\uv_1,\dots,\uv_{n-1}$, we conclude that there is a unit vector $\uv\in U_{t-1}$
for which
\begin{equation}
 \dist(\uv_n^{(t)},\uv) =\dist(\uv_n^{(t)},U_{t-1}) \le 3\delta/4.
 \label{cons:prop:cas1:proof:eq3}
\end{equation}

To alleviate notation, we set
\[
 h=k_{t-1}, \quad k=k_t,
\]
and note that, by definition of a canvas, we  have $h<n$ and $k<n$
since $\ell_{t-1}=\ell_t=n$.  We further set
\[
  (\ux^{(t)}_1,\dots, \ux^{(t)}_{n-1})
   = (\ux^{(t-1)}_1,\dots,\widehat{\ux^{(t-1)}_h}, \dots, \ux^{(t-1)}_n),
\]
as prescribed by condition 3) for $i=t$.  As the same relation holds between
$\uuv^{(t)}$ and $\uuv^{(t-1)}$ as well as between $\ua^{(t)}$ and $\ua^{(t-1)}$,
the induction hypothesis implies that
\begin{equation}
  \dist(\ux_j^{(t)},\uv_j^{(t)}) \le \delta
  \et
  A_j^{(t)}\le \norm{\ux_j^{(t)}}\le 2A_j^{(t)}
 \label{cons:prop:cas1:proof:eq4}
\end{equation}
for each $j=1,\dots,n-1$.  To complete the induction step, it remains to construct
\begin{equation}
  \ux^{(t)}_n
   \in \ux^{(t-1)}_h + \langle \ux^{(t-1)}_1,\dots,
         \widehat{\disp\ux_h^{(t-1)}},\dots,\ux^{(t-1)}_{n}\rangle_\bZ
 \label{cons:prop:cas1:proof:eq5}
\end{equation}
so that \eqref{cons:prop:cas1:proof:eq4} holds as well for $j=n$, for
then $\uux^{(t)}=(\ux_1^{(t)},\dots,\ux_n^{(t)})$
is a basis of $\bZ^n$ with the required properties.

In view of \eqref{cons:prop:cas1:proof:eq3}, we simply need to construct
$\ux^{(t)}_n$ so that it fulfills \eqref{cons:prop:cas1:proof:eq5} as well as
\begin{equation}
  \dist(\ux_n^{(t)},\uv) \le \delta/4
  \et
  A\le \norm{\ux_n^{(t)}}\le 2A
 \quad\text{where}\quad A=A_n^{(t)}.
 \label{cons:prop:cas1:proof:eq6}
\end{equation}
Since $\disp (\ux_1^{(t-1)},\dots,\ux_n^{(t-1)})$ is a basis of $\bZ^n$, we find
\[
 \ux_h^{(t-1)}-r_0\uu_{t-1} \in U_{t-1}
 \quad\text{where}\quad
 |r_0| = \big|\ux_h^{(t-1)}\cdot\uu_{t-1}\big| \le 1
\]
(see \cite[lemma 4.1]{Ro2015}).  Since $\uv$ belongs to $U_{t-1}$ as well, we
may therefore write
\begin{equation}
 r_0\uu_{t-1}+\frac{3}{2}A\uv = \ux_h^{(t-1)} + \sum_{j\neq h}r_j\ux_j^{(t-1)}
 \label{cons:prop:cas1:proof:eq7}
\end{equation}
for some coefficients $r_j\in \bR$, where the sum extends to all $j=1,\dots,n$ with $j\neq h$.
For each of those $j$, we choose an integer $a_j$ such that $\abs{a_j-r_j}\le 1/2$.
Then the point
\[
 \ux_n^{(t)} = \ux_h^{(t-1)} + \sum_{j\neq h}a_j\ux_j^{(t-1)}
\]
satisfies \eqref{cons:prop:cas1:proof:eq5}.  Using \eqref{cons:prop:cas1:proof:eq7}
we find
\[
 \Big\| \ux_n^{(t)}-\frac{3}{2}A\uv \Big\|
 = \Big\| r_0\uu_{t-1} + \sum_{j\neq h}(a_j-r_j)\ux_j^{(t-1)} \Big\|
 \le 1 + \frac{1}{2}\sum_{j\neq h}\norm{\ux_j^{(t-1)}}.
\]
Since condition 1) holds for $i=t-1$, this yields
\begin{align*}
 \Big\| \ux_n^{(t)}-\frac{3}{2}A\uv \Big\|
   \le 1 + \sum_{j\neq h} A_j^{(t-1)}
   \le \sum_{j=1}^n A_j^{(t-1)}
   \le 2A_n^{(t-1)}
   \le \frac{A\delta}{4},
\end{align*}
where the second and third inequalities use \eqref{cons:eq:A} with $i=t-1$ together
with the hypothesis that $\exp(c)\ge 8/\delta\ge 8$, while the
last inequality uses $A_n^{(t-1)}=A_{n-1}^{(t)}\le \exp(-c)A_n^{(t)}$.
As $\delta\le 1$, this implies that $A\le\norm{\ux_n^{(t)}}\le 2A$ and
\[
  \dist(\ux_n^{(t)},\uv)
  =\frac{\norm{(\ux_n^{(t)}-(3/2)A\uv)\wedge\uv}}{\norm{\ux_n^{(t)}}}
 \le \frac{\norm{\ux_n^{(t)}-(3/2)A\uv}}{\norm{\ux_n^{(t)}}}
 \le \frac{A\delta/4}{A}=\frac{\delta}{4}
\]
as requested by \eqref{cons:prop:cas1:proof:eq6}.  This completes the recursion step.
The last assertion of the proposition is easily checked.
\end{proof}

While the above argument is close to that in \cite[Section 5]{Ro2015}, the next
statement uses an even simpler construction of points.

\begin{proposition}
\label{cons:prop:cas2}
Suppose that condition \eqref{param:thm:cond2} holds, namely that we have
$c\ge \log(2)$ and
\begin{equation}
\label{cons:prop:cas2:eq1}
 \sum_{i=1}^s \exp(q_{i-1}-q_i) \le \frac{\delta}{4},
\end{equation}
and suppose that $A_1^{(0)}$ is large enough as a function of $\uuv$ and $\delta$.
Then, we can construct recursively a sequence of bases $(\uux^{(i)})_{0\le i<s}$
of $\bZ^n$ which, for each $i$, satisfies conditions 1) to 4) of Proposition
\ref{cons:prop1} as well as the constraints
\begin{equation}
\label{cons:prop:cas2:proof:eq1}
 \dist(\ux_j^{(i)},\uv_j^{(i)}) \le \frac{\delta}{2}+2\sum_{m=1}^i \exp(q_{m-1}-q_m)
 \quad\text{for $j=1,\dots,n$,}
\end{equation}
which, in view of \eqref{cons:prop:cas2:eq1}, are stronger than \eqref{cons:dxv}.
If $\uuv=(\ue_1,\dots,\ue_{n-1})$, it suffices to have $A_1^{(0)}\ge 6/\delta$.
\end{proposition}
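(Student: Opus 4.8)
The plan is to construct the sequence of bases $(\uux^{(i)})_{0\le i<s}$ by recursion on $i$, then feed it to Proposition \ref{cons:prop1} with $t=s$ and read off all the conclusions of the theorem exactly as in the paragraph preceding Lemma \ref{cons:lemma1}. For $i=0$ I would invoke Lemma \ref{cons:lemma1}, which (for $A_1^{(0)}$ large, resp. $\ge 6/\delta$ when $\uuv=(\ue_1,\dots,\ue_{n-1})$) supplies a basis $\uux^{(0)}$ with condition 1) of Proposition \ref{cons:prop1} and with $\dist(\ux_j^{(0)},\uv_j^{(0)})\le\delta/2$, which is \eqref{cons:prop:cas2:proof:eq1} for $i=0$; condition 2) then follows from Lemma \ref{dist:lemma2} and conditions 3), 4) are vacuous. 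The recursive step, I expect, will impose no lower bound on $A_1^{(0)}$ beyond what Lemma \ref{cons:lemma1} needs, which is what will give the last assertion.

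For the recursive step, assume $\uux^{(0)},\dots,\uux^{(t-1)}$ are built, $1\le t<s$. I would first define $\ux_j^{(t)}$ for $j\ne\ell_t$ by condition 3), i.e. by the analogue of \eqref{param:def:canvas:eq} for the vectors. Since one and the same order-preserving index bijection relates $\ua^{(t)}$ to $\ua^{(t-1)}$, relates $\uuv^{(t)}$ to $\uuv^{(t-1)}$ (condition (1) of Definition \ref{param:def:directions}), and relates these two bases, each such $\ux_j^{(t)}$ equals a previous basis vector $\ux_{j'}^{(t-1)}$ with $\uv_j^{(t)}=\uv_{j'}^{(t-1)}$ and $A_j^{(t)}=A_{j'}^{(t-1)}$; hence condition 1) and the bound \eqref{cons:prop:cas2:proof:eq1} at these indices are inherited from the inductive hypothesis. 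It remains to choose $\ux_{\ell_t}^{(t)}$.

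The idea is that $\uv_{\ell_t}^{(t)}=\uv_{k_t}^{(t)}$ by condition (2) of Definition \ref{param:def:directions}, that the vector $\ux_{k_t}^{(t)}$ (already defined, as $k_t<\ell_t$) lies within $\delta/2+2\sum_{m=1}^{t-1}\exp(q_{m-1}-q_m)$ of $\uv_{k_t}^{(t)}$ by induction, and that, tracing the index bijection, $\ux_{k_t}^{(t)}=\ux_{j'}^{(t-1)}$ for some $j'\le\ell_t$ with $j'\ne k_{t-1}$. I would therefore set
\[
 \ux_{\ell_t}^{(t)} = \ux_{k_{t-1}}^{(t-1)} + N\ux_{k_t}^{(t)},
\]
with $N$ the least positive integer for which $\norm{\ux_{\ell_t}^{(t)}}\ge A_{\ell_t}^{(t)}$. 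This meets the membership condition 4), and with the remark following Definition \ref{param:def:bases} the resulting $\uux^{(t)}$ is a basis of $\bZ^n$, so conditions 3), 4) hold by construction. For condition 1) I would combine the two estimates $\norm{\ux_{k_t}^{(t)}}\le 2A_{k_t}^{(t)}\le A_{\ell_t}^{(t)}$ (valid since $a_{\ell_t}^{(t)}-a_{k_t}^{(t)}\ge c\ge\log 2$) and $\norm{\ux_{k_{t-1}}^{(t-1)}}\le 2A_{k_{t-1}}^{(t-1)}=2A_{\ell_t}^{(t)}\exp(q_{t-1}-q_t)<A_{\ell_t}^{(t)}$ (using $q_t-q_{t-1}=a_{\ell_t}^{(t)}-a_{k_{t-1}}^{(t-1)}$ from \eqref{param:def:canvas:eq} and $\exp(q_{t-1}-q_t)\le\delta/4<1/2$): when $N=1$ then $\norm{\ux_{\ell_t}^{(t)}}\le\norm{\ux_{k_{t-1}}^{(t-1)}}+\norm{\ux_{k_t}^{(t)}}<2A_{\ell_t}^{(t)}$, while when $N\ge 2$ minimality gives $\norm{\ux_{k_{t-1}}^{(t-1)}+(N-1)\ux_{k_t}^{(t)}}<A_{\ell_t}^{(t)}$, hence $\norm{\ux_{\ell_t}^{(t)}}<A_{\ell_t}^{(t)}+\norm{\ux_{k_t}^{(t)}}\le 2A_{\ell_t}^{(t)}$; the lower bound is automatic by the choice of $N$. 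Finally, since $\ux_{\ell_t}^{(t)}-N\ux_{k_t}^{(t)}=\ux_{k_{t-1}}^{(t-1)}$ and $\norm{\ux_{\ell_t}^{(t)}}\ge A_{\ell_t}^{(t)}$, Hadamard's inequality gives $\dist(\ux_{\ell_t}^{(t)},\ux_{k_t}^{(t)})\le\norm{\ux_{k_{t-1}}^{(t-1)}}/\norm{\ux_{\ell_t}^{(t)}}\le 2\exp(q_{t-1}-q_t)$, so the triangle inequality for the projective distance, together with the inductive bound on $\dist(\ux_{k_t}^{(t)},\uv_{k_t}^{(t)})$, yields \eqref{cons:prop:cas2:proof:eq1} for $j=\ell_t$ with the sum now extended to $m=t$. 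This closes the induction.

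By \eqref{cons:prop:cas2:eq1} the right-hand side of \eqref{cons:prop:cas2:proof:eq1} is always $\le\delta$, so \eqref{cons:dxv} holds and, applying Lemma \ref{dist:lemma2} to the $(n-1)$-tuples obtained by deleting position $k_i$ or $\ell_i$ (each a permutation of $\uuv$), condition 2) of Proposition \ref{cons:prop1} holds for every $i$ with the $\theta$ of the statement. Then Proposition \ref{cons:prop1} with $t=s$ and Corollary \ref{cons:prop1:cor} produce the unit vector $\uu$ and the estimates \eqref{cons:prop1:eq1}, \dots, \eqref{cons:prop1:eq5}, \eqref{param:prop1:cor:eq1}, which, as explained just before Lemma \ref{cons:lemma1}, translate into \eqref{param:thm:eq1}, \dots, \eqref{param:thm:eq4}. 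I expect the only genuinely delicate point to be the norm control of the inserted vector $\ux_{\ell_t}^{(t)}$: the hypothesis $c\ge\log 2$ is exactly what forces $\norm{\ux_{k_t}^{(t)}}\le A_{\ell_t}^{(t)}$, and the least-$N$ choice is what keeps $\norm{\ux_{\ell_t}^{(t)}}$ below $2A_{\ell_t}^{(t)}$; everything else is bookkeeping with the index bijection and the projective triangle inequality.
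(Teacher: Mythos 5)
Your proposal is correct and follows essentially the same route as the paper: base case from Lemma \ref{cons:lemma1}, then at each step carry over the untouched vectors via the index bijection and insert $\ux_{\ell_t}^{(t)}=\ux_{k_{t-1}}^{(t-1)}+N\ux_{k_t}^{(t)}$ with a minimal integer multiplier, controlling its norm by $\norm{\ux_{k_t}^{(t)}}\le 2A_{k_t}^{(t)}\le A_{\ell_t}^{(t)}$ and $2A_{k_{t-1}}^{(t-1)}=2A_{\ell_t}^{(t)}\exp(q_{t-1}-q_t)$, and getting $\dist(\ux_{\ell_t}^{(t)},\ux_{k_t}^{(t)})\le 2\exp(q_{t-1}-q_t)$ from the wedge identity plus the triangle inequality, exactly as in the paper (which phrases the multiplier as the smallest non-negative integer and then shows it is positive). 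The only differences are cosmetic, e.g.\ your case split on $N=1$ versus $N\ge 2$ and your use of the series hypothesis instead of $\exp(c)\ge 2$ to bound $\exp(q_{t-1}-q_t)$.
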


\begin{proof}
If $A_1^{(0)}$ is large enough, Lemma \ref{cons:lemma1} provides a
basis $\uux^{(0)}$ with the required property.  For the recurrence step,
suppose that we have constructed appropriate bases $\uux^{(0)},\dots,\uux^{(t-1)}$
of $\bZ^n$ for some integer $t$ with $1\le t<s$.  To alleviate notation, we set
\[
 h=k_{t-1}, \quad \ell=\ell_t, \quad k=k_t,
\]
and note that, by conditions (C2) and (C3) in Definition \ref{param:def:canvas}
of a canvas, we  have $h\le\ell$ and $k<\ell$.  We also observe that
\begin{equation}
\label{cons:prop:cas2:proof:eq2}
 \frac{A_\ell^{(t)}}{A_h^{(t-1)}} = \exp(q_t-q_{t-1}) \ge \exp(c) \ge 2
\end{equation}
in view of relation \eqref{param:def:canvas:eq} for $i=t-1$, and the formula for the
switch numbers $q_i$ in Definition \ref{param:def:systems}.  Then we define
$\uux^{(t)}=(\ux^{(t)}_1,\dots,\ux^{(t)}_n)$ by
\begin{align}
  (\ux^{(t)}_1,\dots,\widehat{\ux^{(t)}_\ell}, \dots, \ux^{(t)}_n)
   &= (\ux^{(t-1)}_1,\dots,\widehat{\ux^{(t-1)}_h}, \dots, \ux^{(t-1)}_n),
 \label{cons:prop:cas2:proof:eq3}\\
  \ux^{(t)}_\ell &= \ux^{(t-1)}_h + a \ux^{(t)}_k,
  \label{cons:prop:cas2:proof:eq4}
\end{align}
where $a$ is the smallest non-negative integer such that
\begin{equation}
\label{cons:prop:cas2:proof:eq5}
  A_\ell^{(t)} < \norm{\ux^{(t)}_\ell}.
\end{equation}
Since $\uux^{(t-1)}$ is a basis of $\bZ^n$, this $n$-tuple $\uux^{(t)}$ is
also a basis of $\bZ^n$.  By construction, it fulfills  conditions 3) and 4)
for $i=t$.  Moreover, since $\ua^{(t)}$ and $\ua^{(t-1)}$ are linked by the same
relation as $\uux^{(t)}$ and $\uux^{(t-1)}$ in \eqref{cons:prop:cas2:proof:eq3},
and since by hypothesis condition 1) holds for $i=t-1$ and all $j=1,\dots,n$,
that condition also holds for $i=t$ except possibly when $j=\ell$.  In particular,
since $k<\ell$, we obtain
\[
 \norm{\ux^{(t)}_k} \le 2A_k^{(t)}\le A_\ell^{(t)}
\]
where the second inequality uses \eqref{cons:eq:A}.
Since $h\le \ell$, we also find
\[
 \norm{\ux^{(t-1)}_h} \le 2A_h^{(t-1)} \le A_\ell^{(t)}
\]
using \eqref{cons:prop:cas2:proof:eq2}.  Thus the integer $a$ must be
positive, and so
\[
 \norm{\ux^{(t)}_\ell}
  \le A_\ell^{(t)} + \norm{\ux^{(t)}_k}
  \le 2A_\ell^{(t)}.
\]
Together with \eqref{cons:prop:cas2:proof:eq5}, this shows that condition 1)
holds as well for $i=t$ and $j=\ell$.  Similarly, since inequality
\eqref{cons:prop:cas2:proof:eq1}
holds for $i=t-1$ and all $j=1,\dots,n$, and since $\uuv^{(t)}$ and $\uuv^{(t-1)}$
are linked in the same way as $\uux^{(t)}$ and $\uux^{(t-1)}$
in \eqref{cons:prop:cas2:proof:eq3}, that inequality
also holds for $i=t$ and $j\neq \ell$ in the stronger form
\begin{equation}
\label{cons:prop:cas2:proof:eq6}
 \dist(\ux^{(t)}_j,\uv^{(t)}_j)
  \le \frac{\delta}{2}+2\sum_{m=1}^{t-1}\exp(q_{m-1}-q_m).
\end{equation}
To estimate this distance when $j=\ell$, we first note, using
\eqref{cons:prop:cas2:proof:eq4}, that
\[
 \norm{\ux_\ell^{(t)}\wedge\ux_k^{(t)}}
 = \norm{\ux_h^{(t-1)}\wedge\ux_k^{(t)}}
 \le \norm{\ux_h^{(t-1)}}\norm{\ux_k^{(t)}},
\]
and so
\[
 \dist(\ux^{(t)}_\ell,\ux^{(t)}_k)
  \le \frac{\norm{\ux^{(t-1)}_h}}{\norm{\ux^{(t)}_\ell}}
  \le \frac{2A_h^{(t-1)}}{A_\ell^{(t)}}
   = 2\exp(q_{t-1}-q_t).
\]
Together with \eqref{cons:prop:cas2:proof:eq6} for $j=k$, this yields
\[
 \dist(\ux^{(t)}_\ell,\uv^{(t)}_k)
  \le \dist(\ux^{(t)}_\ell,\ux^{(t)}_k) +  \dist(\ux^{(t)}_k,\uv^{(t)}_k)
  \le \frac{\delta}{2}+2\sum_{m=1}^{t}\exp(q_{m-1}-q_m).
\]
Since $\uv^{(t)}_\ell=\uv^{(t)}_k$, this shows that \eqref{cons:prop:cas2:proof:eq1}
holds for $i=t$. Thus condition 2) holds as well for $i=t$.
The last assertion of the proposition is clear from the statement of Lemma \ref{cons:lemma1}.
\end{proof}

%
%

\section{Application to a specific family of generalized $n$-systems}
\label{sec:app}

The goal of this section is to apply Theorem \ref{param:thm} on parametric geometry
of numbers with constraints to produce points $\uu$ that satisfy the second part of
Theorem \ref{intro:thm1}, thereby completing the proof of the latter theorem.
We first construct a generalized $n$-system $\tuP$ in the sense of \cite[\S 4]{Ro2016},
and we approximate it by a rigid $n$-system $\uP$ to which Theorem \ref{param:thm}
applies. Then, we use geometry of numbers to show that the point $\uu$ provided by
the latter theorem has the required property.

To this end, we fix integers $m\ge 1$ and $n\ge m+2$, and set
\[
r=n-m.
\]
We also fix an orthonormal basis $(\uv_1,\dots,\uv_n)$ of $\bR^n$ and set
\[
 \uuv=(\uv_1,\dots,\uv_{n-1}),
\]
so that $\Theta(\uuv)=1$ in the notation of \eqref{dist:Theta}.  We choose real
numbers $\delta$ and $c$ with
\begin{equation}
\label{app:eq:delta,c}
 0<\delta\le 1/(4n) \et  c=\log(8/\delta),
\end{equation}
and we denote by $\kappa$ the constant provided by Theorem \ref{param:thm} for
the above choice of $\uuv$ and $\delta$.  Finally we choose sequences of positive real
numbers $(X_i)_{i\ge 0}$ and $(Y_i)_{i\ge 0}$ with the following properties
\begin{align}
&\kappa+c\le \log(X_0),
 \label{app:eq:X:0}\\
 &c+\log(X_i)\le \log(X_{i+1}) \quad\text{for $i=0,\dots,r-2$,}
 \label{app:eq:X:1}\\
 &(3m+n)c+\log(X_{i+r-1})\le \log(Y_i) \le -2mc+\log(X_{i+r})\quad\text{for each $i\ge 0$.}
 \label{app:eq:XY}
\end{align}
To this data, we attach a continuous piecewise linear map
$\tuP=(\tP_1,\dots,\tP_n)\colon [q_0,\infty)\to\bR^n$
which satisfies
\[
 0\le \tP_1(q)\le\cdots\le\tP_r(q)\le\tP_{r+1}(q)=\dots=\tP_n(q)
 \et
 \tP_1(q)+\cdots+\tP_n(q)=q
\]
for each $q\ge q_0$.  Its combined graph, namely the union of the graphs
of its components $\tP_1,\dots,\tP_n$, is given by Figure \ref{fig1}.

\begin{figure}[ht]
     \begin{tikzpicture}[scale=0.46]
       \draw[dashed] (2,11)--(2,2) node[below]{$q_i$};
       \draw[dashed] (10,11)--(10,2) node[below]{$s_i$};
       \draw[dashed] (18,13)--(18,2) node[below]{$t_i$};
       \draw[dashed] (26,17)--(26,2) node[below]{$q_{i+1}$};
       \draw[thick] (2,3)--(10,11)--(18,13)--(26,17);
       \draw[thick] (2,5) -- (26,5);
       \draw[thick] (2,9) -- (26,9);
       \draw[thick] (2,11) -- (10,11);
       \draw[thick] (2,11) -- (10,11);
       \draw[thick] (18,13) -- (26,13);
       \node[draw,circle,inner sep=1pt,fill] at (10,11) {};
       \node[draw,circle,inner sep=1pt,fill] at (18,13) {};
       \node[left] at (1.5,3) {$\log(X_{i})$};
       \node[left] at (1.5,5) {$\log(X_{i+1})$};
       \node[right] at (26.5,5) {$\log(X_{i+1})$};
       \node[left] at (0.5,7) {$\cdots$};
       \node[right] at (27.5,7) {$\cdots$};
       \node[left] at (1.5,9) {$\log(X_{i+r-1})$};
       \node[right] at (26.5,9) {$\log(X_{i+r-1})$};
       \node[left] at (1.5,11) {$\log(Y_i)$};
       \node[right] at (26.5,13) {$\log(X_{i+r})$};
       \node[right] at (26.5,17) {$\log(Y_{i+1})$};
      \node[above] at (6,11) {$\tP_{r+1}=\cdots=\tP_n$};
      \node[above, right] at (10,14.8) {$\tP_{r}=\cdots=\tP_n$};
      \node[above,right] at (10,13.5) {of slope $1/(m+1)$};
      \node[above, right] at (18,17.5) {$\tP_{r+1}=\cdots=\tP_n$};
      \node[above,right] at (18,16.2) {of slope $1/m$};
      \node[above] at (14,5) {$\tP_1$};
      \node[above] at (14,7) {$\cdots$};
      \node[above] at (14,9) {$\tP_{r-1}$};
      \node[above] at (22,13) {$\tP_r$};
       \end{tikzpicture}
\caption{The combined graph of a generalized $n$-system.}
\label{fig1}
\end{figure}
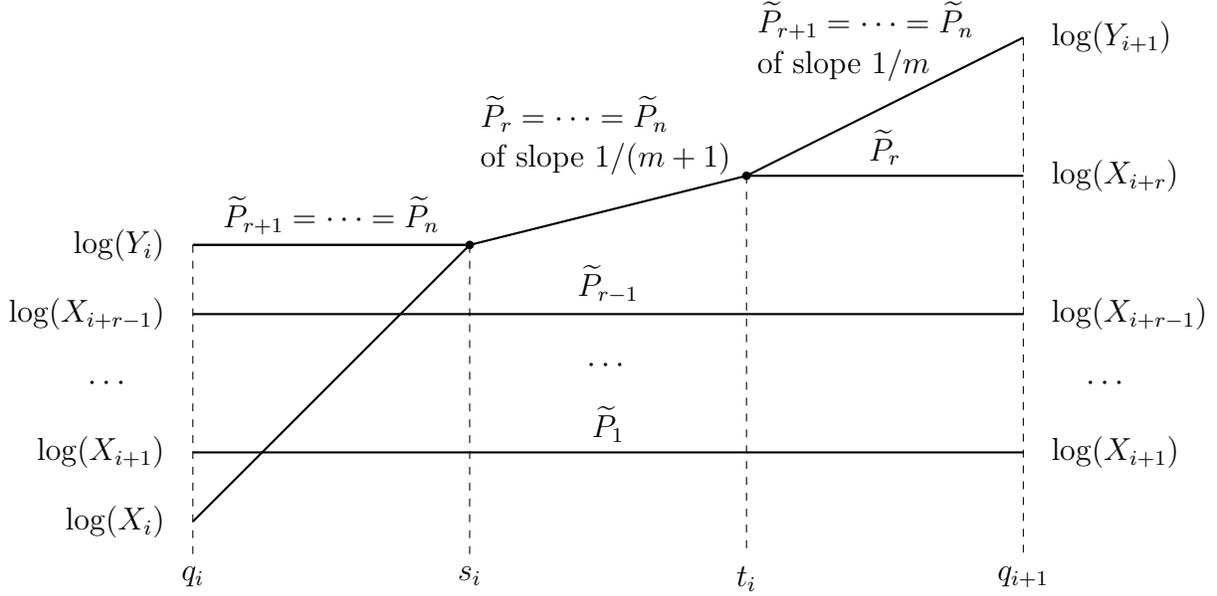

Explicitly, we construct sequences $(q_i)_{i\ge 0}$, $(s_i)_{i\ge 0}$ and $(t_i)_{i\ge 0}$
by setting
\begin{align*}
 &q_i = \log(X_iX_{i+1}\cdots X_{i+r-1}Y_i^m),\\
 &s_i = \log(X_{i+1}\cdots X_{i+r-1} Y_i^{m+1}),\\
 &t_i = \log(X_{i+1}\cdots X_{i+r-1} X_{i+r}^{m+1})
\end{align*}
for each $i\ge 0$.  Then, we have $q_i<s_i<t_i<q_{i+1}$ and, as illustrated
in Figure \ref{fig1}, we define $\tP_1,\dots,\tP_n$ on $[q_i,q_{i+1}]$ by
\begin{align*}
 \tP_{r+1}(q)=\cdots=\tP_n(q)
  &=\begin{cases}
    \disp \log(Y_i) &\text{if $q_i\le q\le s_i$,}\\[6pt]
    \disp \log(Y_i) +\frac{q-s_i}{m+1} &\text{if $s_i\le q\le t_i$,}\\[10pt]
    \disp \log(X_{i+r}) +\frac{q-t_i}{m} &\text{if $t_i\le q\le q_{i+1}$,}
    \end{cases}\\
(\tP_1(q),\dots,\tP_r(q))
    &=\begin{cases}
         \Phi_r(\log(X_{i+1}),\dots,\log(X_{i+r-1}),q-q_i+\log(X_i))
            &\text{if $q_i\le q\le s_i$,}\\[6pt]
         (\log(X_{i+1}),\dots,\log(X_{i+r-1}),\tP_{r+1}(q))
            &\text{if $s_i\le q\le t_i$,}\\[6pt]
         (\log(X_{i+1}),\dots,\log(X_{i+r})) &\text{if $t_i\le q\le q_{i+1}$,}
      \end{cases}
\end{align*}
where $\Phi_r\colon\bR^r\to\Delta_r\subset\bR^r$ is the ordering map defined in Section \ref{sec:param}.

In the terminology of \cite{Ro2016}, the map $\tuP$ is a generalized $n$-system and
Section 4 of that paper provides a general method to approximate such a map by an
ordinary $n$-system $\uP$.  To prove the next result, we adapt this method to
produce an approximate rigid $n$-system $\uP$ whose transition indices $\ell_j$ are all
equal to $n$, so that Theorem \ref{param:thm} applies to it.

\begin{theorem}
\label{app:thm}
Let $(\tuv_i)_{i\ge 0}$ denote the periodic sequence of period $r$ with
$\tuv_i=\uv_{i+1}$ for each $i=0,\dots,r-1$.  For the above data, there exist
a unit vector $\uu$ of $\bR^n$ whose coordinates are linearly independent over
$\bQ$, and a sequence of non-zero points $(\ux_i)_{i\ge 0}$ in $\bZ^n$ such that,
for each $i\ge 0$ and each $q\ge q_0$, we have
\begin{itemize}
\item[$1)$] $\dist(\ux_i,\tuv_i)\le \delta$,
\medskip
\item[$2)$] $\abs{\log\norm{\ux_i}-\log(X_i)}\le nc$
   \ and \ $\abs{\log\abs{\ux_i\cdot\uu} + q_i -\log(X_i)}\le 4mnc$,
\medskip
\item[$3)$] $\norm{\tuP(q)-\uL_\uu(q)}_\infty \le 5mnc$,
\end{itemize}
where $\norm{\ }_\infty$ stands for the maximum norm on $\bR^n$.
\end{theorem}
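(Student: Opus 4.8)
The plan is to realise the generalized $n$-system $\tuP$, up to an error bounded by a fixed multiple of $mnc$, as a \emph{rigid} $n$-system $\uP$ of mesh $c$ all of whose transition indices $\ell_j$ equal $n$, and then to apply Theorem~\ref{param:thm} to $\uP$ together with $\uuv=(\uv_1,\dots,\uv_{n-1})$ under hypothesis~\eqref{param:thm:cond1}. That hypothesis is available precisely because $c=\log(8/\delta)$ and because $\ell_j=n$ throughout, while $a^{(0)}_1=P_1(q_0)$ will lie within $c$ of $\log X_0\ge\kappa+c$, so that $a^{(0)}_1\ge\kappa$ as required. The unit vector $\uu$ and the coherent sequence of bases $(\uux^{(j)})_{j\ge0}$ returned by the theorem then supply $\uu$ and the points $\ux_i$.

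\emph{Construction of $\uP$.} Here I would adapt the passage from a generalized $n$-system to an ordinary one from \cite[\S4]{Ro2016}. First round (downwards, say) every $\log X_i$ and $\log Y_i$ to a multiple of $c$; since \eqref{app:eq:X:1} and \eqref{app:eq:XY} force all the gaps $\log X_{i+1}-\log X_i$, $\log Y_i-\log X_{i+r-1}$, $\log X_{i+r}-\log Y_i$ and $\log Y_{i+1}-\log X_{i+r}$ to be at least $c$, the rounded values stay strictly increasing where needed. The unit-slope piece of the combined graph of $\tuP$ issuing at $q\approx q_i$ from the level $\approx\log X_i$ is kept as a single unit-slope step, but lengthened so that its rising coordinate overtakes the whole group sitting at level $\approx\log Y_i$ and finishes at position $n$. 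Each slanted piece of slope $1/(m+1)$, respectively $1/m$, which in $\tuP$ consists of $m+1$, respectively $m$, coincident coordinates, is replaced by a ``rotation staircase'': the relevant coordinates are spread, with consecutive gaps $c$, over a window of height $mc$, respectively $(m-1)c$, and at each elementary step the lowest coordinate of the window jumps over all the others, landing at position $n$, while the minimum of the window climbs at the prescribed average slope. Thus every switch of $\uP$ has $\ell_j=n$, and the $(k_j,\ell_j)$ follow, within each period, a fixed pattern: one step $(1,n)$, then blocks of steps with $\ell_j=n$ and $k_j\in\{r,r+1\}$. Since the window offsets introduced this way are the \emph{same} at every period, they do not accumulate, and a direct bookkeeping of the roundings (each $<c$) and of these offsets (each $\le mc$ per coordinate) gives $\norm{\uP(q)-\tuP(q)}_\infty$ and $|q_{\nu_i}-q_i|$ both bounded by an absolute multiple of $mnc$, where $q_{\nu_i}$ is the switch number of $\uP$ sitting at the rounded value of $q_i$; one also checks that conditions (C1)--(C3) of Definition~\ref{param:def:canvas} hold.

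\emph{Extraction of the points.} For each $i$ set $\ux_i=\ux^{(\nu_i)}_{k_{\nu_i}}$, the basis vector carrying the single unit-slope step of period $i$; then $k_{\nu_i}=1$, $a^{(\nu_i)}_{k_{\nu_i}}$ is within $c$ of $\log X_i$, and the first step of period $i$ is long enough (as $\log Y_i-\log X_i\ge(3m+n)c$) that $q_{\nu_i+1}>q_{\nu_i}+\log 2+c_2$. Property~3) is immediate from \eqref{param:thm:eq5}: one has $\norm{\uP(q)-\uL_\uu(q)}_\infty\le c_2$, and since $\delta\le1/(4n)$ forces $c\ge\log(32n)$, the value $c_2=nc_1+\log(n!)=n\log 32+\log(n!)$ is at most $nc$; hence $\norm{\tuP(q)-\uL_\uu(q)}_\infty\le 5mnc$, the constant being large enough to absorb the above multiple of $mnc$ and the term $nc$. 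Property~2) follows from \eqref{param:thm:eq2}, giving $\big|\log\norm{\ux_i}-\log X_i\big|\le\log 2+c\le nc$, together with \eqref{param:thm:eq3}, which is now applicable and, combined with the bounds on $q_{\nu_i}-q_i$ and on $a^{(\nu_i)}_{k_{\nu_i}}-\log X_i$, yields $\big|\log|\ux_i\cdot\uu|+q_i-\log X_i\big|\le 4mnc$. Finally $\tP_1$ is unbounded since $X_i\to\infty$, hence so is $L_{\uu,1}$, and by the remark following Theorem~\ref{param:thm} the coordinates of $\uu$ are linearly independent over $\bQ$.

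There remains Property~1): that the coherent sequence of directions attached to $\uuv$ assigns to $\ux_i$ the direction $\tuv_i=\uv_{(i\bmod r)+1}$, after which \eqref{param:thm:eq1} gives $\dist(\ux_i,\tuv_i)\le\delta$. This follows by running the recursion of Definition~\ref{param:def:directions} against the explicit canvas: the $r$ successive unit-slope trajectories that carry the levels $\log X_i,\log X_{i+1},\dots,\log X_{i+r-1}$ enter position~$1$ in $r$ consecutive periods, and an induction against the fixed per-period pattern of the $(k_j,\ell_j)$ identifies the direction of the trajectory entering at period $i$ as $\uv_{(i\bmod r)+1}$. The main obstacle is the construction of $\uP$: producing a rigid $n$-system that both approximates $\tuP$ and has all transition indices equal to $n$ is precisely what dictates the somewhat delicate inequalities \eqref{app:eq:X:1}--\eqref{app:eq:XY}, and it requires careful control of the rounding to multiples of $c$ and of the canvas conditions (C1)--(C3); the direction-tracking of the last step, though routine, is also a little intricate.
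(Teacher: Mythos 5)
Your proposal follows essentially the same route as the paper's proof: the paper builds exactly the staircase canvas you describe (explicit points $\ua^{(i,j)}$ whose last block is an arithmetic progression of step $c$, with $k$-pattern $1,r,\dots,r,r+1,\dots,r+1$ and $\ell_{i,j}=n$ throughout), applies Theorem \ref{param:thm} under hypothesis \eqref{param:thm:cond1}, sets $\ux_i=\ux_1^{(i,0)}$, proves your direction formula $\uv_1^{(i,0)}=\tuv_i$ by induction over the periods, and obtains 1)--3) from \eqref{param:thm:eq1}, \eqref{param:thm:eq2}, \eqref{param:thm:eq3}, \eqref{param:thm:eq5} together with a comparison $\norm{\uP(q)-\tuP(q)}_\infty\le 4mnc$.

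One step of your sketch would fail as literally written: you cannot first round every $\log X_i$ and $\log Y_i$ independently to a multiple of $c$ and then require the staircases to land exactly on these rounded levels, because condition (C3) forces congruences along each staircase --- in the paper's notation $a_{i+r}=b_i+\sigma(i)mc$ and $b_{i+1}=a_{i+r}+\tau(i)m^*c+c$, with $m^*$ as in \eqref{app:eq:m*} --- so the approximating multiples of $c$ must be chosen recursively, with slack $mc$ (resp.\ $m^*c$) rather than $c$, which is precisely what \eqref{app:eq:ab}--\eqref{app:def:ab} do; your remark that the ``window offsets are the same at every period'' is not the right mechanism (they are re-chosen at each period so that the errors do not accumulate). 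This is not only a cosmetic point: the statement to be proved carries the explicit constants $nc$, $4mnc$ and $5mnc$, so the bookkeeping you leave as ``an absolute multiple of $mnc$, large enough to absorb'' must actually be carried out with these slacks (the paper does it via $|a_i-\log X_i|\le mc$, $|q_{i,0}-q_i|\le 2mnc$ and the inequality $\uP(q)\le\tuP(q+2mnc)$, split into three cases along each period). With the recursive choice of the $a_i,b_i$ and that bookkeeping supplied, your argument coincides with the paper's.
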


\begin{proof}
Our construction of an approximate rigid $n$-system $\uP$ differs slightly
depending on whether $m=1$ or  $m>1$.  To cover both cases, we set
\begin{equation}
  m^*=\max\{1,m-1\}.
  \label{app:eq:m*}
\end{equation}
We first define two sequences $(a_i)_{i\ge 0}$ and $(b_i)_{i\ge 0}$
of multiples of $c$ satisfying
\begin{equation}
  -mc < a_i - \log(X_i) \le 0
  \et
  -m^*c < b_i + c - \log(Y_i) \le 0
  \label{app:eq:ab}
\end{equation}
for each $i\ge 0$.  For $a_0,\dots,a_{r-1}$ and $b_0$, we choose the largest
multiples of $c$ satisfying these conditions.  Then, for each $i\ge 0$, we define
recursively
\begin{equation}
  a_{i+r}=b_i+\sigma(i)mc \et b_{i+1}=a_{i+r}+\tau(i)m^*c+c
  \label{app:def:ab}
\end{equation}
with integers $\sigma(i)$ and $\tau(i)$ chosen so that $a_{i+r}$ and
$b_{i+1}$ satisfy conditions \eqref{app:eq:ab}.
Hypotheses
\eqref{app:eq:X:0} and \eqref{app:eq:X:1} imply that $0<a_0<\cdots<a_{r-1}$
while \eqref{app:eq:XY} yields
\begin{align}
 b_i-a_{i+r-1}
  &\ge \log(Y_i)-(m^*+1)c-\log(X_{i+r-1}) \ge (n+m)c,
  \label{app:thm:eq:ba}\\
 a_{i+r}-b_i
  &\ge \log(X_{i+r})-mc-\log(Y_i) \ge mc
 \label{app:thm:eq:ab}
\end{align}
for each $i\ge 0$.  In particular, we have $\sigma(i)\ge 1$ and $\tau(i)\ge 2$
for each $i\ge 0$.

For each pair of integers $(i,j)$ with $i\ge 0$ and
\begin{equation}
 0\le j\le \nu(i):=\sigma(i)m+\tau(i)(m-1),
  \label{app:eq:j}
\end{equation}
we define
\[
\begin{array}{ll}
 \begin{aligned}
   &\ua^{(i,j)}=(a_i,\dots,a_{i+r-1},b_i-(m-1)c,\dots,b_i),\\
   &k_{i,j}=1, \hspace*{32pt}\ell_{i,j}=n
  \end{aligned}
  &\Big\}\  \text{if $j=0$,}\\[15pt]
 \begin{aligned}
   &\ua^{(i,j)}=(a_{i+1},\dots,a_{i+r-1},b_i+(j-m)c,\dots,b_i+jc),\\
   &k_{i,j}=r, \hspace*{32pt} \ell_{i,j}=n
  \end{aligned}
  &\Big\}\  \text{if $0 < j < \sigma(i)m+m$,}\\[15pt]
 \begin{aligned}
   &\ua^{(i,j)}=(a_{i+1},\dots,a_{i+r},b_i+(j+1-m)c,\dots,b_i+jc),\\
   &k_{i,j}=r+1, \quad \ell_{i,j}=n
  \end{aligned}
  &\Big\}\  \text{if $j\ge \sigma(i)m+m$,}
\end{array}
\]
and we denote by $q_{i,j}$ the sum of the coordinates of $\ua^{(i,j)}$.  By
\eqref{app:thm:eq:ba}, \eqref{app:thm:eq:ab} and above, each $\ua^{(i,j)}$ is a
strictly increasing sequence of positive multiples of $c$, ending in an
arithmetic progression with difference $c$.  We also note that $\sigma(i)m+m
\le \nu(i)$ if and only if $m>1$, because $\tau(i)\ge 2$.  Thus pairs
$(i,j)$ with $\sigma(i)m+m\le j\le \nu(i)$ occur when $m>1$, but not when
$m=1$. Using the lexicographical ordering in which $(i,j)<(i',j')$ when either $i<i'$
or both $i=i'$ and $j<j'$, it follows that this data defines a canvas
of mesh $c$.  Let $\uP=(P_1,\dots,P_n)\colon[q_{0,0},\infty)\to\Delta_n$
denote its associated $n$-system.  By definition, its sequence of switch numbers is
$(q_{i,j})$.

Figure \ref{fig2} shows the combined graph of $\uP$ on a typical interval
$[q_{i,0},q_{i+1,0}]$ when $m=2$.  For larger $m$, the graph is similar.
However, when $m=1$, it is sensibly different because there is no switch
number of $\uP$ inside the interval $[q_{i,\sigma(i)},\,q_{i+1,0}]$, and the
graph of $\uP$ is affine linear with $P_{r+1}=P_n$ of slope $1$ over that interval.
As the picture illustrates, the behaviour of $\uP$ is similar to that of $\tuP$ and
we will show in fact that their difference $\uP-\tuP$ is bounded.

\begin{figure}[ht]
     \begin{tikzpicture}[scale=0.46]
       \draw[dashed, semithick] (2,11)--(2,2) node[below]{$q_{i,0}$};
       \draw[dashed, semithick] (10,11.8)--(10,2) node[below]{$q_{i,1}$};
       \draw[dashed] (12.4,12.6)--(12.4,3) node[below]{$q_{i,2}$};
       \draw[dashed] (14.8,13.4)--(14.8,3) node[below]{$q_{i,3}$};
       \draw[dashed, semithick] (20.4,15)--(20.4,2) node[below]{$q_{i,2\sigma(i)+2}$};
       \draw[dashed] (22,15.8)--(22,3);
       \node[below] at (22.4,3){$q_{i,2\sigma(i)+3}$};
       \draw[dashed] (18,14.2)--(18,3);
       \node[below] at (18.3,3){$q_{i,2\sigma(i)+1}$};
       \node[below] at (16.4,2){$\cdots$};
       \node[below] at (24.8,2) {$\cdots$};
       \draw[dashed] (27.6,18.2)--(27.6,3);
       \node[below] at (26.8,3){$q_{i,2\sigma(i)+\tau(i)}$};
       \draw[dashed, semithick] (29.2,19)--(29.2,2) node[below]{$q_{i+1,0}$};
       \draw[thick] (2,3)--(10,11.8)--(14.8,11.8)--(15,12);
       \draw[thick] (2,11)--(12.4,11)--(14.8,13.4)--(15,13.4);
       \draw[thick] (2,10.2)--(10,10.2)--(12.4,12.6)--(14.8,12.6)--(15,12.6);
       \draw[thick] (17.8,14)--(18,14.2)--(20.4,14.2)--(22,15.8)--(23.6,15.8);
       \draw[thick] (17.8,13.4)--(18,13.4)--(29.2,13.4);
       \draw[thick] (17.8,12.6)--(18,12.6)--(20.4,15)--(22,15)--(23.6,16.6);
       \draw[thick] (26,16.6)--(27.6,18.2)--(29.2,18.2);
       \draw[thick] (26,17.4)--(27.6,17.4)--(29.2,19);

       \draw[thick] (2,5) -- (29.2,5);
       \draw[thick] (2,8) -- (29.2,8);
       \draw[thick] (2,11) -- (10,11);
       \draw[thick] (2,11) -- (10,11);
       \node[left] at (1.5,3) {$a_{i}$};
       \node[left] at (1.5,5) {$a_{i+1}$};
       \node[right] at (29.7,5) {$a_{i+1}$};
       \node[left] at (0.5,7) {$\vdots$};
       \node[right] at (30.7,7) {$\vdots$};
       \node[left] at (1.5,8) {$a_{i+r-1}$};
       \node[right] at (29.7,8) {$a_{i+r-1}$};
       \node[left] at (1.5,10.2) {$b_i-c$};
       \node[left] at (1.5,11.1) {$b_i$};
       \node[right] at (29.7,13) {$a_{i+r}$};
       \node[right] at (29.7,18.2) {$b_{i+1}-c$};
       \node[right] at (29.7,19.1) {$b_{i+1}$};
      \node[above] at (24.8,5) {$P_1$};
      \node at (24.8,7) {$\vdots$};
      \node[above] at (24.8,8) {$P_{r-1}$};
      \node[below] at (24.8,13.4) {$P_r$};
      \node at (24.8,16.6) {$\dots$};
     \node at (16.4,13) {$\dots$};
       \end{tikzpicture}
\caption{The combined graph of $\uP$ on $[q_{i,0},q_{i+1,0}]$ when $m=2$.}
\label{fig2}
\end{figure}
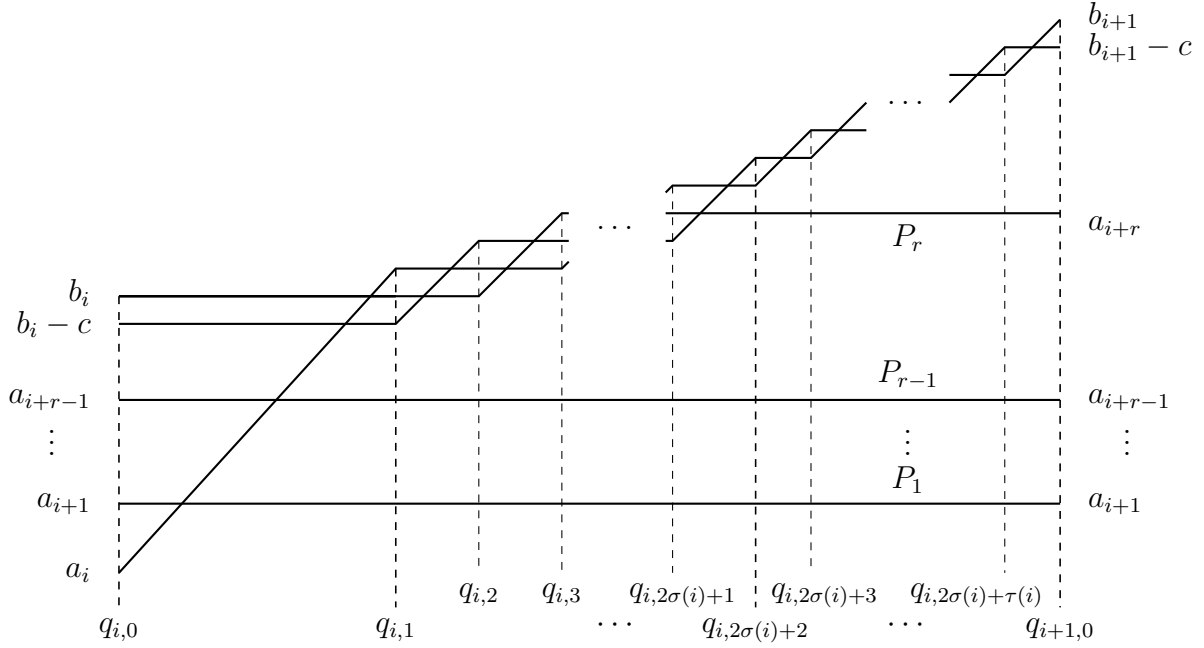

Before we do this, consider the coherent system of directions
$\uuv^{(i,j)}=(\uv_1^{(i,j)},\dots,\uv_n^{(i,j)})$ with $i\ge 0$ and $0\le j\le \nu(i)$
attached to $\uuv=(\uv_1,\dots,\uv_{n-1})$.  We claim that, for each $i\ge 0$,
we have
\begin{equation}
\label{app:claim:v(i,0)}
 \uuv^{(i,0)}=(\tuv_i,\dots,\tuv_{i+r-1},\uv_{r+1},\dots,\uv_{n-1},\tuv_i).
\end{equation}
For $i=0$, this is clear since $k_{0,0}=1$ and thus
$\uuv^{(0,0)}=(\uv_1,\dots,\uv_{n-1},\uv_1)$. Now, suppose that
\eqref{app:claim:v(i,0)} holds for some $i\ge 0$.  If $m=1$, we have $r=n-1\ge 2$
and this formula becomes
\[
 \uuv^{(i,0)}=(\tuv_i,\dots,\tuv_{i+n-2},\tuv_i).
\]
As $k_{i,0}=1$, $k_{i,1}=\dots=k_{i,\sigma(i)}=n-1$ and $k_{i+1,0}=1$, we deduce that
\begin{align*}
 &\uuv^{(i,1)}=\cdots=\uuv^{(i,\sigma(i))}
   =(\tuv_{i+1},\dots,\tuv_{i+n-2},\tuv_i,\tuv_i),\\
&\uuv^{(i+1,0)}=(\tuv_{i+1},\dots,\tuv_{i+n-2},\tuv_i,\tuv_{i+1})
   =(\tuv_{i+1},\dots,\tuv_{i+n-1},\tuv_{i+1}),
\end{align*}
which proves our claim \eqref{app:claim:v(i,0)} by induction on $i$.  If $m>1$, we
have $k_{i,0}=1$ and $k_{i,j}=r$ for $1\le j<\sigma(i)m+m$.  Then, the
sequence $\uuv^{(i,1)}, \uuv^{(i,2)},\dots,\uuv^{(i,\sigma(i)m+m-1)}$
is again periodic of period $m$ with
\begin{align*}
  \uuv^{(i,1)}
    &=(\tuv_{i+1},\dots,\tuv_{i+r-1},\uv_{r+1},\dots,\uv_{n-1},\tuv_i,\uv_{r+1}),\\
  \uuv^{(i,2)}
    &=(\tuv_{i+1},\dots,\tuv_{i+r-1},\uv_{r+2},\dots,\uv_{n-1},\tuv_i,\uv_{r+1},
          \uv_{r+2}),\\
    &\dots\\
  \uuv^{(i,m-1)}
    &=(\tuv_{i+1},\dots,\tuv_{i+r-1},\uv_{n-1},\tuv_i,\uv_{r+1},\dots,\uv_{n-1}),\\
   \uuv^{(i,m)}
    &=(\tuv_{i+1},\dots,\tuv_{i+r-1},\tuv_i,\uv_{r+1},\dots,\uv_{n-1},\tuv_i).
\end{align*}
In particular, we have $\uuv^{(i,\sigma(i)m+m-1)}=\uuv^{(i,m-1)}$.
Since $k_{i,\sigma(i)m+m}=r+1$, we deduce that
\begin{align*}
 \uuv^{(i,\sigma(i)m+m)}
   &= (\tuv_{i+1},\dots,\tuv_{i+r-1},\tuv_i,\uv_{r+1},\dots,\uv_{n-1},\uv_{r+1})\\
   &= (\tuv_{i+1},\dots,\tuv_{i+r},\uv_{r+1},\dots,\uv_{n-1},\uv_{r+1}).
\end{align*}
Finally since $k_{i,j}=r+1$ for $(\sigma(i)+1)m\le j\le \nu(i)$, we find similarly
that the sequence $\uuv^{(i,\sigma(i)m+m)}, \dots,\uuv^{(i,\nu(i))}$
is periodic of period $m-1$ and that
\[
 \uuv^{(i,\nu(i))}
   = (\tuv_{i+1},\dots,\tuv_{i+r},\uv_{n-1},\uv_{r+1},\dots,\uv_{n-1}),
\]
thus $\uuv^{(i+1,0)} = (\tuv_{i+1},\dots,\tuv_{i+r},\uv_{r+1},\dots,\uv_{n-1},
\tuv_{i+1})$ since $k_{i+1,0}=1$.  Again this proves \eqref{app:claim:v(i,0)}
by induction on $i$.

By \eqref{app:eq:X:0}, we have $a_1^{(0,0)}=a_0\ge \log(X_0)-c\ge \kappa$.
Together with \eqref{app:eq:delta,c} and the fact that $\ell_{i,j}=n$ for all pairs
$(i,j)$ with $i\ge 0$ and $0\le j\le \nu(i)$, this shows that
the $n$-system $\uP$ and its associated coherent sequence of directions $(\uuv^{(i,j)})$
satisfy the hypotheses of Theorem \ref{param:thm}.  For the corresponding
constant $c_2$, we find
\begin{equation}
 \label{app:thm:eq:c2}
 c_2=n\log(32)+\log(n!)\le n\log(32n)\le nc.
\end{equation}
Let $\uu$ denote the unit vector of $\bR^n$, and
$\uux^{(i,j)}=(\ux_1^{(i,j)},\dots,\ux_n^{(i,j)})$, the generic element
of the coherent sequence of bases of $\bZ^n$ provided by this theorem.  We set
\begin{equation}
 \ux_i = \ux_1^{(i,0)} \quad \text{for each $i\ge 0$.}
\label{app:thm:eq:xi}
\end{equation}
It remains to show that the point $\uu$ and the sequence $(\ux_i)_{i\ge 0}$
have the required properties.  By \eqref{param:thm:eq5}, we first note,
 using \eqref{app:thm:eq:c2}, that
\begin{equation}
 \label{app:thm:eq:LP}
 \norm{\uL_\uu(q)-\uP(q)}_\infty\le  c_2\le nc
\quad\text{for each $q\ge q_{0,0}$.}
\end{equation}
Since $P_1(q)$ tends to infinity with $q$, the same applies to $L_{\uu,1}(q)$ and
so the coordinates of $\uu$ are linearly independent over $\bQ$.

Fix an index $i\ge 0$.
By \eqref{app:claim:v(i,0)}, we have $\tuv_1^{(i,0)}=\tuv_i$. Thus the estimate
\eqref{param:thm:eq1} applied to the first element of the basis $\uux^{(i,0)}$ yields
\[
  \dist(\ux_i,\tuv_i)=\dist(\ux_1^{(i,0)},\tuv_1^{(i,0)})\le \delta,
\]
as requested in condition 1).  Similarly, since $a_1^{(i,0)}=a_i$ and $k_{i,0}=1$,
the estimates \eqref{param:thm:eq2} and \eqref{param:thm:eq3} provide
respectively
\begin{equation}
  \big|\log\norm{\ux_i}-a_i\big|\le \log(2)
  \et
  \big|\log|\ux_i\cdot\uu| - a_i+ q_{i,0} \big| \le c_2
 \label{app:thm:eq:Lx}
\end{equation}
because, using \eqref{app:thm:eq:ba} and \eqref{app:thm:eq:c2}, we find
\[
 q_{i,1}-q_{i,0}=b_i+c-a_i\ge b_i+c-a_{i+r-1}\ge (n+1)c\ge \log(2)+c_2.
\]
By \eqref{app:eq:m*}--\eqref{app:def:ab}, we also observe that,
under the componentwise ordering on $\bR^n$, we have
\begin{equation}
\left.
\begin{array}{rcl}
 -2m\uc\ \le &\ua^{(i,0)} - \tuP(q_i) &\le\ 0,\\
 -2m\uc\ \le &\ua^{(i,1)}- \tuP(s_i) &\le\ 0,\\
 -2m\uc\ \le &\ua^{(i,\sigma(i)m)} - \tuP(t_i) &\le\ 0,
\end{array}
\right\}
\quad\text{where}\quad \uc=(c,\dots,c).
 \label{app:eq:qst1}
\end{equation}
For example, we obtain the last estimate by observing that
\[
 \begin{aligned}
 \ua^{(i,\sigma(i)m)} - \tuP(t_i)
 = (a_{i+1}-\log(X_{i+1}),&\dots,a_{i+r-1}-\log(X_{i+r-1}),\\
     &a_{i+r}-mc-\log(X_{i+r}),\dots,a_{i+r}-\log(X_{i+r})).
 \end{aligned}
\]
Taking the sum of the coordinates of each term in the inequalities \eqref{app:eq:qst1},
we deduce that
\begin{equation}
 \label{app:eq:qst2}
 -2mnc\ \le\ q_{i,0}-q_i,\ q_{i,1}-s_i,\ q_{i,\sigma(i)m}-t_i \ \le\ 0.
\end{equation}
In particular, we have $|q_i-q_{i,0}|\le 2mnc$.  As \eqref{app:eq:ab} gives
$|a_i-\log(X_i)|\le mc$, we deduce from \eqref{app:thm:eq:Lx} that
\begin{align*}
  &\big|\log\norm{\ux_i}-\log(X_i)\big|
  \le \log(2)+mc\le nc,\\
  &\big|\log|\ux_i\cdot\uu| - \log(X_i)+ q_i \big|
   \le nc+mc+2mnc \le 4mnc.
\end{align*}
Thus condition 2) holds.

By \eqref{app:eq:qst2}, we also have $q_{0,0}\le q_0$.  So, in view of
\eqref{app:thm:eq:LP}, it suffices to show that
\begin{equation}
 \label{app:claim:P}
 \norm{\uP(q)-\tuP(q)}_\infty \le 4mnc \quad\text{for each $q\ge q_0$,}
\end{equation}
in order to prove condition 3) and thus complete the whole proof.

To do this, fix a real number $q$ with $q\ge q_0$.   Since $q_0\ge q_{0,0}$, there is
an integer $i\ge 0$ for which $q_{i,0}\le q < q_{i+1,0}$.   To prove
\eqref{app:claim:P} for this value of $q$, it suffices to show that
\begin{equation}
 \label{app:claim:bis}
 \uP(q) \le \tuP(q^*) \quad \text{where $q^*=q+2mnc$.}
\end{equation}
Indeed, if we admit this inequality, then all the coordinates of the $n$-tuple
$\tuP(q^*)-\uP(q)$ are non-negative.  Since they sum to $q^*-q=2mnc$,
these coordinates are at most $2mnc$, and so $\norm{\tuP(q^*)-\uP(q)}_\infty
\le 2mnc$.  Since each component of $\tuP$ is continuous and piecewise linear with
slope at most $1$, we also have $\norm{\tuP(q^*)-\tuP(q)}_\infty
\le 2mnc$, and thus $\norm{\uP(q)-\tuP(q)}_\infty \le 4mnc$.  To prove
\eqref{app:claim:bis}, we distinguish three cases.

\textbf{Case 1.} Suppose first that $q_{i,0}\le q< q_{i,1}$.  If $q+q_i-q_{i,0}\le s_i$,
we find
\[
 \uP(q)=\Phi_n(\ua^{(i,0)}+(q-q_{i,0})\ue_1)
 \le \Phi_n(\tuP(q_i)+(q-q_{i,0})\ue_1)
  = \tuP(q+q_i-q_{i,0}),
\]
where the inequality in the middle uses the fact that $\Phi_n$ is order preserving
together with the first inequality in \eqref{app:eq:qst1}.   If instead
$s_i < q+q_i-q_{i,0}$, then using the second inequality in \eqref{app:eq:qst1}, we find
\[
 \uP(q)\le \uP(q_{i,1})=\ua^{(i,1)} \le \tuP(s_i) \le \tuP(q+q_i-q_{i,0}).
\]
Since \eqref{app:eq:qst2} gives $q_i-q_{i,0}\le 2mnc$, we deduce that
$\uP(q)\le \tuP(q^*)$ in both instances.

From now on, we may therefore assume that $q\ge q_{i,1}$.  Then, for each
$j=1,\dots,r-1$, we find that
\[
 P_j(q)=a_{i+j}\le \log(X_{i+j})=\tP_j(s_i)\le \tP_j(q^*),
\]
where the last inequality uses $s_i\le q_{i,1}+2mnc\le q^*$ coming from
\eqref{app:eq:qst2}.

\textbf{Case 2.} Suppose that $q_{i,1}\le q\le q_{i,\sigma(i)m}$.  If $q^*\le t_i$,
then, for $j=r,\dots,n$, we find that
\[
 P_j(q) \le P_n(q)
  \le b_i+c+\frac{q-q_{i,1}}{m+1}
  \le \log(Y_i)+\frac{q^*-s_i}{m+1}
  = \tP_j(q^*),
\]
because $b_i+c\le \log(Y_i)$ by \eqref{app:eq:ab}, and $q-q_{i,1}\le q^*-s_i$
by \eqref{app:eq:qst2}.  If instead $q^*>t_i$, then, for the same values
of $j$, we find
\[
 P_j(q)\le P_n(q)
  \le P_n(q_{i,\sigma(i)m})=a_j^{(i,\sigma(i)m)}
  \le \tP_j(t_i) \le \tP_j(q^*),
\]
using the third inequality in \eqref{app:eq:qst1}.  Thus, \eqref{app:claim:bis} holds in
both instances.

\textbf{Case 3.} Suppose that $q_{i,\sigma(i)m}\le q$.  Using \eqref{app:eq:qst2}, we find
$0\le q-q_{i,\sigma(i)m}\le q^*-t_i$, thus $t_i\le q^*$ and so
\[
 P_r(q)\le a_{i+r}\le \log(X_{i+r}) = \tP_r(t_i) \le \tP_r(q^*).
\]
If $q^*\le q_{i+1}$, we also obtain, for $j=r+1,\dots,n$,
\[
 P_j(q)\le P_n(q) \le a_{i+r}+\frac{q-q_{i,\sigma(i)m}}{m}
  \le \log(X_{i+r}) +\frac{q^*-t_i}{m}= \tP_j(q^*).
\]
If instead $q^*>q_{i+1}$, then, for the same values of $j$, we find
\[
 P_j(q)\le P_n(q) \le P_n(q_{i+1,0}) = b_{i+1}
  \le \log(Y_{i+1}) = \tP_j(q_{i+1}) \le \tP_j(q^*).
\]
Thus, \eqref{app:claim:bis} holds in that case also.
\end{proof}

\subsection*{Proof of Theorem \ref{intro:thm1}, part 2)}
Let $V$ be a subspace of $\bR^n$ of dimension $m+1$, and let
$\psi\colon[1,\infty)\to(0,\infty)$ be an unbounded monotonically increasing
function.  Since $V$ has the same dimension as
\[
 V_0=\langle\ue_1+\cdots+\ue_r,\ue_{r+1},\dots,\ue_n\rangle_\bR,
\]
there is an isometry $T$ of $\bR^n$ which maps $V_0$ to $V$.   Setting
$\uv_j=T(\ue_j)$ for each $j=1,\dots,n$, we obtain an orthonormal basis
$(\uv_1,\dots,\uv_n)$ of $\bR^n$ such that
\[
 V=\langle\uv_1+\cdots+\uv_r,\uv_{r+1},\dots,\uv_n\rangle_\bR.
\]
We apply the previous theorem to this choice of basis $(\uv_1,\dots,\uv_n)$
for
\begin{equation}
\label{proof:eq:delta,c}
 \delta=1/\max\{4n, 24r\}  \et  c=\log(8/\delta),
\end{equation}
so that \eqref{app:eq:delta,c} holds, and for sequences $(X_i)_{i\ge0}$ and
$(Y_i)_{i\ge 0}$ satisfying \eqref{app:eq:X:0}--\eqref{app:eq:XY}
as well as
\begin{equation}
 \label{proof:eq:Y,psi}
 \log(Y_i)=(\rho/m)\log(X_{i+r-1})
 \et
 X_iX_{i+1}\cdots X_{i+r-1}\le \psi(X_{i+r}/X_i)
\end{equation}
for each $i\ge 0$, where $\rho=\rho_m$ is given by \eqref{intro:thm1:eq:rho}.  This
is possible since $\rho>m$.  We claim that the unit vector $\uu\in\bR^n$ provided by
Theorem \ref{app:thm} has the property stated in part 2) of Theorem \ref{intro:thm1}.
Since its coordinates are linearly independent
over $\bQ$, this amounts to showing that any non-zero point $\ux$ of $\bZ^n$
of sufficiently large norm with $\dist(\ux,V)\le\delta$ satisfies
\begin{equation}
 |\ux\cdot\uu| > \psi(\norm{\ux})^{-1}\norm{\ux}^{-\rho}.
 \label{app:proof:xu}
\end{equation}
To prove this, we use properties 1)--3) of the associated sequences
$(\tuv_i)_{i\ge 0}$ and $(\ux_i)_{i\ge 0}$ in Theorem  \ref{app:thm}.

Let $\ux$ be a non-zero point of $\bZ^n$ with $\dist(\ux,V)\le\delta$.  Assuming,
as we may, that $\norm{\ux}$ is large enough, there exists an integer $i\ge 0$ such that
\[
 \log(Y_i) \le \log\norm{\ux} + 15c' \le \log(Y_{i+1})  \quad\text{where}\quad c'=mnc,
\]
and so there is a unique value of $q\in[s_i,q_{i+1}]$ for which
\[
 \log\norm{\ux}=\tP_{r+1}(q)-15c'.
\]
For such $q$, we note the following useful formula
\begin{equation}
 \label{proof:eq:P}
 \tP_{r+1}(q)=\tP_r(q)+\max\{0,q-t_i\}/m.
\end{equation}
In the computations below, we assume that $\norm{\ux}$ is large enough so that
for example we have $\log(Y_i/X_{i+r-1})\ge 9c'$.  To simplify
the exposition, we simply put a star on the inequalities that require such additional
assumptions.   We consider two cases.

\noindent
\textbf{Case 1:} Suppose that $L_\uu(\ux,q)=\log\norm{\ux}$.

We first note that $(\tuv_{i+1},\dots,\tuv_{i+r},\uv_{r+1},\dots,\uv_n)$ is an
orthonormal basis of $\bR^n$ and that
\[
 V=\langle \tuv_{i+1}+\cdots+\tuv_{i+r}, \uv_{r+1}, \dots, \uv_n \rangle_\bR,
\]
because $(\tuv_{i+1},\dots,\tuv_{i+r})$ is a permutation of $(\uv_1,\dots,\uv_r)$.
Since property 1) gives
\[
 \dist(\ux_{i+j},\tuv_{i+j}) \le \delta\le 1/(24r) \quad \text{for $j=1,\dots,r$,}
\]
by the choice of $\delta$ in \eqref{proof:eq:delta,c}, it follows from
Lemma \ref{dist:lemma2} that $(\ux_{i+1},\dots,\ux_{i+r})$ is a linearly
independent $r$-tuple of points of $\bZ^n$.  We claim that if $\norm{\ux}$
is large enough, we also have
\begin{equation}
 \label{proof:claim}
 \ux \in \langle \ux_{i+1},\dots,\ux_{i+r} \rangle_\bR,
\end{equation}
and thus Lemma \ref{dist:lemma3} applies.
To prove this, we look more closely at the trajectories of the points
$\ux,\ux_{i+1},\dots,\ux_{i+r}$.  For each $j\ge 1$, we have that
$q\le q_{i+1}\le q_{i+j}$, and so property 2) in Theorem  \ref{app:thm} yields
\[
 L_\uu(\ux_{i+j},q)
  \le \max\{\log\norm{\ux_{i+j}}, q_{i+j}+\log|\ux_{i+j}\cdot\uu|\}
  \le \log(X_{i+j}) + 4c'.
\]
We distinguish two sub-cases depending on the value of $q$.

a) Suppose first that $s_i\le q < t_i+10mc'$.  Then, the above estimates give
\[
 \max_{1\le j<r} L_\uu(\ux_{i+j},q)
  \le \log(X_{i+r-1})+4c'
   <^* \log(Y_i)-5c' \le \tP_r(q)-5c',
\]
while \eqref{proof:eq:P} yields
\[
 L_\uu(\ux,q)=\log\norm{\ux}=\tP_{r+1}(q)-15c' < \tP_r(q)-5c'.
\]
As $\tP_r(q)\le L_{u,r}(q)+5c'$ by property 3), this means that
 \[
\max\{L_\uu(\ux,q),L_\uu(\ux_{i+1},q),\dots,L_\uu(\ux_{i+r-1},q)\} <L_{\uu,r}(q).
\]
Thus, the $r$ points $\ux,\ux_{i+1},\dots,\ux_{i+r-1}\in \bZ^n$ are linearly
dependent and so the point $\ux$ belongs to $\langle \ux_{i+1},\dots,\ux_{i+r-1} \rangle_\bR$
which is stronger than our claim \eqref{proof:claim}.

b) Suppose now that  $t_i+10mc'\le q \le q_{i+1}$.  Then, using \eqref{proof:eq:P},
we find
\[
 \max_{1\le j\le r} L_\uu(\ux_{i+j},q)
  \le \log(X_{i+r})+4c'
  \le \tP_{r+1}(q)-6c'.
\]
As $\tP_{r+1}(q)\le L_{\uu,r+1}(q)+5c'$ and $L_\uu(\ux,q)=\log\norm{\ux}
=\tP_{r+1}(q)-15c'$, this means that
\[
\max\{L_\uu(\ux,q),L_\uu(\ux_{i+1},q),\dots,L_\uu(\ux_{i+r},q)\} \le L_{\uu,r+1}(q) - c'.
\]
So, the $r+1$ points $\ux,\ux_{i+1},\dots,\ux_{i+r}\in \bZ^n$ are linearly
dependent and thus \eqref{proof:claim} holds again.

Applying Lemma \ref{dist:lemma3}, we can therefore write
\[
 \ux=a_1\ux_{i+1}+\cdots+a_r\ux_{i+r}
\]
with coefficients $a_1,\dots,a_r\in\bR$ that satisfy
\begin{equation}
 \label{proof:eq:a}
 \frac{1}{2\sqrt{r}}\le \frac{\norm{a_j\ux_{i+j}}}{\norm{\ux}} \le \frac{2}{\sqrt{r}}
 \quad
 \text{for $j=1,\dots,r$.}
\end{equation}
This yields the lower bound
\[
 \abs{\ux\cdot\uu}
 \ge \abs{a_1\ux_{i+1}\cdot\uu}-\sum_{j=2}^r\abs{a_j\ux_{i+j}\cdot\uu}
 \ge \frac{\norm{\ux}}{2\sqrt{r}}
      \left( \frac{\abs{\ux_{i+1}\cdot\uu}}{\norm{\ux_{i+1}}}
           - 4\sum_{j=2}^r\frac{\abs{\ux_{i+j}\cdot\uu}}{\norm{\ux_{i+j}}} \right)
\]
By property 2), we also have
\[
 \abs{\log\frac{\abs{\ux_j\cdot\uu}}{\norm{\ux_j}}+q_j} \le 5c'
 \quad\text{for each $j\ge 0$.}
\]
Thus the previous estimate yields
\[
 \begin{aligned}
 \abs{\ux\cdot\uu}
 &\ge \frac{\norm{\ux}}{2\sqrt{r}}
      \big( \exp(-q_{i+1}-5c') - 4r\exp(-q_{i+2}+5c') \big)\\
 &>^* \frac{\norm{\ux}}{c''\exp(q_{i+1})}
   = \frac{\norm{\ux}}{c''X_{i+1}\cdots X_{i+r} Y_{i+1}^m}
   = \frac{\norm{\ux}}{c''X_{i+1}\cdots X_{i+r-1} X_{i+r}^{\rho+1}}
 \end{aligned}
\]
where $c''=4\sqrt{r}\exp(5c')$.  The inequalities \eqref{proof:eq:a} also
imply that $a_r$ is non-zero.
This means that $\ux\notin\langle \ux_{i+1},\dots,\ux_{i+r-1} \rangle_\bR$ and
thus rules out the sub-case a) considered above.  So we have $q\ge t_i+10mc'$,
and using \eqref{proof:eq:P} we find that
\[
 \norm{\ux}=\exp(\tP_{r+1}(q)-15c') \ge X_{i+r}\exp(-5c').
\]
Combining the last two estimates and using \eqref{proof:eq:Y,psi}, we conclude as
announced that
\[
  \abs{\ux\cdot\uu}
 >^* \frac{\norm{\ux}^{-\rho}}{X_iX_{i+1}\cdots X_{i+r-1} }
 \ge \frac{\norm{\ux}^{-\rho}}{\psi(X_{i+r}/X_i)}
 \ge^* \frac{\norm{\ux}^{-\rho}}{\psi(\norm{\ux})}.
\]

\noindent
\textbf{Case 2:} Suppose instead that $L_\uu(\ux,q)>\log\norm{\ux}$.

By definition, this means that
\begin{equation}
\label{proof:case2:eq1}
 \log\abs{\ux\cdot\uu} > \log\norm{\ux}-q.
\end{equation}
Since the coordinates of $\tuP(q)$ sum to $q$, we have
\[
 q = \log(X_{i+1})+\cdots+\log(X_{i+r-1})
    + \begin{cases}
        (m+1)\tP_{r+1}(q) &\text{if $s_i\le q\le t_i$,}\\[5pt]
        \log(X_{i+r}) + m\tP_{r+1}(q) &\text{if $t_i\le q\le q_{i+1}$.}
       \end{cases}
\]
As $\tP_{r+1}(q)\ge \log(X_{i+r})$ when $q\ge t_i$, this implies in all cases that
\[
 q \le \log(X_{i+1})+\cdots+\log(X_{i+r-1})+(m+1)\tP_{r+1}(q).
\]
We also have
\begin{equation}
\label{proof:case2:eq2}
 \log\norm{\ux}+15c'=\tP_{r+1}(q) \ge \log(Y_i) =(\rho/m)\log(X_{i+r-1}).
\end{equation}
Using this to eliminate $\log(X_{i+r-1})$ and $\tP_{r+1}(q)$ from
the upper bound for $q$, we obtain
\[
 q \le \log(X_{i+1})+\cdots+\log(X_{i+r-2})+(m+1+m/\rho)(\log\norm{\ux}+15c').
\]
Now, we use the exact value of $\rho$.  Since $m+m/\rho=\rho$, we deduce that
\[
 q \le^* \log(X_{i})+\cdots+\log(X_{i+r-2})+(\rho+1)\log\norm{\ux}.
\]
Together with \eqref{proof:case2:eq1}, this yields
\[
 \abs{\ux\cdot\uu}
  > \frac{\norm{\ux}}{\exp(q)}
  \ge \frac{\norm{\ux}^{-\rho}}{X_i\cdots X_{i+r-2}}
  \ge \frac{\norm{\ux}^{-\rho}}{\psi(X_{i+r-1})}.
\]
Finally, since $\rho/m>1$, we obtain from \eqref{proof:case2:eq2} that
$X_{i+r-1}\le^* \norm{\ux}$ and thus \eqref{app:proof:xu} holds
if $\norm{\ux}$ is large enough.

%
%
\section{A simplification of Thurnheer's argument}
\label{sec:simplif}

As mentioned in the introduction, the first part of Theorem \ref{intro:thm1} is
a result of Thurnheer
when $n=m+2$.  Our goal in this last section is to provide a simplification
of his argument along the lines of \cite{DS1968}.  The following statement is a slight
generalization of \cite[Theorem 1 (b)]{Th1990}.

\begin{theorem}[Thurnheer]
\label{simplif:thm}
Let $m$ be a positive integer, let $n=m+2$, let $\uu$ be a point of
$\bR^n$ whose coordinates are linearly independent over $\bQ$, and
let $\ud\in\bR^n$.  Then, for any $\delta,\epsilon\in(0,1)$, there is a
non-zero point $\ux$ of $\bZ^n$ with
\[
 \abs{\ux\cdot\ud}\le \delta\norm{\ux}
 \et
 \abs{\ux\cdot\uu}\le \epsilon\norm{\ux}^{-\rho}
\]
where $\rho$ is as in Theorem \ref{intro:thm1}.
\end{theorem}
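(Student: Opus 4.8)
The plan is to adapt the continued-fraction-style argument of Davenport and Schmidt \cite{DS1968}.

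\emph{Normalisation.} By homogeneity assume $\norm{\ud}=1$, and pick a rational vector $\ud_0$ with $\norm{\ud-\ud_0}\le\delta/2$. Then $\ud_0^\perp$ is a rational hyperplane, so some $S\in\GL_n(\bZ)$ carries $\ud_0^\perp\cap\bZ^n$ onto $\langle\ue_1,\dots,\ue_{n-1}\rangle_\bZ$; the change of variables $\ux\mapsto S\ux$ then replaces $\uu$ by a $\GL_n(\bZ)$-image of it (which again has $\bQ$-linearly independent coordinates) and, after shrinking $\delta$ and $\epsilon$ by constants depending only on $S$, reduces the problem to the case $\ud=\ue_n$: given $\delta,\epsilon\in(0,1)$ we must find a non-zero $\ux=(x_1,\dots,x_n)\in\bZ^n$ with $\abs{x_n}\le\delta\norm{\ux}$ and $\abs{\ux\cdot\uu}\le\epsilon\norm{\ux}^{-\rho}$. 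In the ``slab'' $\abs{x_n}\le\delta\norm{\ux}$, the quantity $\ux\cdot\uu$ is a small perturbation of $(x_1,\dots,x_{n-1})\cdot(u_1,\dots,u_{n-1})$; as plain Dirichlet in $\bR^{m+1}$ only gives the exponent $m$, the improvement up to $\rho\in(m,m+1)$ must be extracted from the freedom in the extra coordinate $x_n$, whose size may reach $\delta\norm{\ux}$.

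\emph{Minimal points in the slab and a continued-fraction recursion.} Call $\ub\in\bZ^n\setminus\{0\}$ \emph{admissible} if $\abs{(\ub)_n}\le\delta\norm{\ub}$, and let $\ub_1,\ub_2,\dots$ be the admissible minimal points of $\uu$: $\norm{\ub_j}$ strictly increasing, $\abs{\ub_j\cdot\uu}$ strictly decreasing, and $\abs{\ux\cdot\uu}\ge\abs{\ub_j\cdot\uu}$ for every admissible $\ux$ with $\norm{\ux}<\norm{\ub_{j+1}}$. Since the integer points with $x_n=0$ already approximate $\uu$ with the Dirichlet exponent $m$, this sequence is infinite, with $X_j:=\norm{\ub_j}\to\infty$ and $L_j:=\abs{\ub_j\cdot\uu}\to0$. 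The core of the proof --- and the place where one follows \cite{DS1968} most closely --- is to show that consecutive admissible minimal points satisfy a continued-fraction-type recursion: $\ub_{j+1}$ is built from $\ub_j$ and $\ub_{j-1}$ by adding an integer multiple of $\ub_j$ --- a ``partial quotient'' taken as large as admissibility allows --- plus a bounded correction, with, when $m\ge2$, a passive block of coordinates carried along unchanged, so that $m=1$ and $m\ge2$ are treated at once. This yields coupled two-term recurrences for $X_j$ and $L_j$, whose rates of growth and decay balance exactly when the exponent equals the positive root of $x^2-mx-m$, which is $\rho=\rho_m$ of \eqref{intro:thm1:eq:rho}; the identity $\rho^2=m(\rho+1)$, equivalently $\rho-m=\rho/(\rho+1)$ of \eqref{intro:ineq:rho}, is what makes the bookkeeping close, and one concludes $L_j\le C\,X_j^{-\rho}$ along the sequence for an absolute constant $C$.

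\emph{Conclusion.} As $X_j\to\infty$, for $j$ large the point $\ux=\ub_j$ is a non-zero integer point with $\abs{x_n}\le\delta\norm{\ux}$ and $\abs{\ux\cdot\uu}\le C\,X_j^{-\rho}\le\epsilon\norm{\ux}^{-\rho}$; undoing the normalisation gives the theorem.

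\emph{Main obstacle.} The work is concentrated in the second step. A naive Minkowski argument in the slab can produce a point proportional to $\ue_n$, which is not admissible, so one is forced to argue through the recursion; setting it up requires choosing each partial quotient so that pushing it to the admissibility limit keeps $\norm{\ub_j}$, $\abs{\ub_j\cdot\uu}$ and the last coordinate $(\ub_j)_n$ all under control simultaneously, and it requires absorbing the degenerate steps --- where consecutive terms fail to be linearly independent (the analogue of an infinite partial quotient), or where $\ub_j$ and $\ub_{j-1}$ already lie in a subspace containing an earlier $\ub_i$ --- by an inner induction. This, and checking that the exponent surviving the optimisation is exactly $\rho$, is where the argument mirrors \cite{DS1968}.
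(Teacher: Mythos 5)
Your proposal has the right cast of characters (minimal points, a three\hyphen term continued-fraction recursion, the identity $\rho-m=\rho/(\rho+1)$), but the logical architecture is backwards, and this is a genuine gap rather than a stylistic difference. In the Davenport--Schmidt scheme that the paper follows, the recursion for minimal points is \emph{not} an unconditional fact to be proved and then exploited; it is the engine of a reductio. The paper assumes, for contradiction, that no non-zero $\ux\in\bZ^n$ satisfies both inequalities, i.e.\ every $\ux$ with $\abs{\ux\cdot\uu}\le\epsilon\norm{\ux}^{-\rho}$ has $\abs{\ux\cdot\ud}>\delta\norm{\ux}$. It then introduces the modified norm $\norm{\ux}'=\max\{\abs{\ux\cdot\ud},(\delta/4)\norm{\ux}\}$ and, via a Minkowski box adapted to an orthonormal basis containing $\uu$ and $\ud$ \emph{combined with the contradiction hypothesis}, obtains the key estimate $L_i\le c_3X_{i+1}^{-\rho-1}$ for the $\norm{\ }'$-minimal points (this transference step, where $\rho-m=\rho/(\rho+1)$ is used, is exactly where the exponent $\rho$ enters; it is absent from your sketch, and your dismissal of Minkowski arguments misses that the paper's box is not the slab). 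Only under that same hypothesis do the sign alternation and the recursion $\ux_{i+1}=t_i\ux_i+\ux_{i-1}$ hold; every step of their proofs invokes the implication \eqref{simplif:hyp}. Your plan instead asserts the recursion unconditionally for the admissible minimal points and tries to extract $L_j\le C X_j^{-\rho}$ from a balance of growth rates. That cannot work: if the exact recursion held for all large $j$, all subsequent minimal points would lie in a fixed rank-two sublattice, and the very wedge-product computation that ends the paper's proof shows this contradicts the $\bQ$-linear independence of the coordinates of $\uu$; with the ``bounded corrections'' and ``inner induction'' you allow, no mechanism is given for proving the recursion or for extracting the decay rate, so the core of the argument is missing.

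There is also a flaw in your final step even granting your main claim: from $L_j\le C X_j^{-\rho}$ with a fixed constant $C$ you conclude $\abs{\ub_j\cdot\uu}\le\epsilon\norm{\ub_j}^{-\rho}$ ``for $j$ large'', which is a non sequitur when $\epsilon<C$; the theorem demands the inequality for \emph{every} $\epsilon\in(0,1)$, which is a liminf-equal-zero statement, and the contradiction structure is what delivers it (the hypothetical $\epsilon$ is fixed at the outset and refuted). Your preliminary normalisation to $\ud=\ue_n$ via a rational approximation and a unimodular change of variables is fine and merely differs from the paper's normalisation of $(\uu,\ud)$ to an orthonormal pair, but it does not repair the central issue: you need to run the argument as a proof by contradiction, with the Schmidt--Thurnheer Minkowski lemma supplying the exponent and the recursion plus the wedge-product limit supplying the contradiction.
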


For the proof, we assume that $(\uu,\ud)$ is an
orthonormal pair of vectors in $\bR^n$ because, if the conclusion holds for such
a pair, then it also holds for any pair $(a\uu,b\uu+c\ud)$ with $a,b,c\in\bR$ and
$a\neq 0$, and this covers the general case.
From there, we proceed by contradiction.  We suppose that there exist
numbers $0<\delta,\epsilon<1$
such that any non-zero point $\ux\in\bZ^n$ with $\abs{\ux\cdot\uu}\le
\epsilon\norm{\ux}^{-\rho}$ satisfies $\abs{\ux\cdot\ud} > \delta\norm{\ux}$.
To derive a contradiction, we define a norm $\norm{\ }'$ on $\bR^n$ through
the formula
\[
 \norm{\ux}'
  = \max\big\{ \abs{\ux\cdot\ud}, (\delta/4)\norm{\ux}\big\}
\]
for each $\ux\in\bR^n$.  Then, our hypothesis is that, for any non-zero point
$\ux$ of $\bZ^n$, we have
\begin{equation}
\label{simplif:hyp}
 \abs{\ux\cdot\uu} \le \epsilon\norm{\ux}^{-\rho}
 \quad\Longrightarrow\quad
 \norm{\ux}'=\abs{\ux\cdot\ud} > \delta\norm{\ux}.
\end{equation}

Extending an argument of Schmidt in \cite[Lemma 1]{Sc1976} for the case $n=3$,
Thurnheer obtained the following general estimate in \cite[Part II (iii)]{Th1990}.

\begin{lemma}[Schmidt and Thurnheer]
\label{simplif:lemma:convex}
There are constants $c_3$ and $X_0$ depending only on $n$ and $\epsilon$
with the property that, for each real number $X$ with $X\ge X_0$, there exists a
non-zero point $\ux$ of\/ $\bZ^n$ with $\norm{\ux}'\le X$ and $\abs{\ux\cdot\uu}
\le c_3X^{-\rho-1}$.
\end{lemma}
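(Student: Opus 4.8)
The plan is to produce $\ux$ by a single application of Minkowski's convex body theorem to a box whose proportions are tuned to the exponent $\rho$, and then to use the standing hypothesis \eqref{simplif:hyp} to convert the smallness of $\abs{\ux\cdot\uu}$ into the bound $\norm{\ux}'\le X$. Write $W=\langle\uu,\ud\rangle^\perp$, an $m$-dimensional subspace of $\bR^n$ since $n=m+2$, and for parameters $\mu,Y,Z>0$ set
\[
 C(\mu,Y,Z)=\{\ux\in\bR^n\,;\,\abs{\ux\cdot\uu}\le\mu,\ \abs{\ux\cdot\ud}\le Y,\ \norm{\proj_W(\ux)}\le Z\},
\]
a compact symmetric convex body of volume $4\,V_m\,\mu\,Y\,Z^m$, where $V_m$ is the volume of the unit ball of $\bR^m$. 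Given $X\ge X_0$, with $X_0$ to be chosen, I would set
\[
 \mu=\epsilon\,2^{-\rho}\,Z^{-\rho} \et Y=\frac{2^{m+\rho}}{V_m\,\epsilon}\,Z^{\rho-m},
\]
where $Z>0$ is the unique value for which $Y=X$; this exists because $\rho-m\in(0,1)$ makes $Z\mapsto Y$ an increasing bijection of $(0,\infty)$. With these choices $4V_m\mu YZ^m=2^{m+2}=2^n$, so (in the form valid for compact convex bodies) Minkowski's theorem produces a non-zero $\ux\in\bZ^n\cap C(\mu,Y,Z)$.

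It then remains to verify the two asserted inequalities for this $\ux$. Because $\rho-m<1$ we have $Z/X\to\infty$ as $X\to\infty$, and also $\mu\to0$; so, enlarging $X_0$ if necessary (a bound depending only on $\epsilon$ and on $\rho=\rho_m$, hence only on $n$ and $\epsilon$), we may assume $\mu\le Z$ and $Y\le Z$. Since $\uu$, $\ud$ and an orthonormal basis of $W$ together form an orthonormal basis of $\bR^n$, it follows that $\norm{\ux}^2\le\mu^2+Y^2+Z^2\le3Z^2$, so $\norm{\ux}\le2Z$, whence
\[
 \abs{\ux\cdot\uu}\le\mu=\epsilon\,(2Z)^{-\rho}\le\epsilon\,\norm{\ux}^{-\rho}.
\]
Thus the implication \eqref{simplif:hyp} applies to $\ux$ and yields $\norm{\ux}'=\abs{\ux\cdot\ud}\le Y=X$. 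Finally I would rewrite $\mu$ in terms of $X$: eliminating $Z$ from $Y=X$ and using that $\rho$ satisfies $\rho^2=m\rho+m$ — equivalently $\rho-m=\rho/(\rho+1)$, as in \eqref{intro:ineq:rho} — so that $\rho/(\rho-m)=\rho+1$, one gets $\mu=c_3\,X^{-\rho-1}$ with $c_3=\epsilon\,2^{-\rho}\big(2^{m+\rho}/(V_m\epsilon)\big)^{\rho+1}$, depending only on $n$ and $\epsilon$. Therefore $\abs{\ux\cdot\uu}\le c_3X^{-\rho-1}$, completing the proof.

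\textbf{Main obstacle.} The delicate point — and the reason this is more than a routine geometry-of-numbers estimate — is the balancing of the three edge lengths of $C(\mu,Y,Z)$: one needs $\vol(C(\mu,Y,Z))$ to stay bounded below (here, equal to $2^n$) uniformly in $X$, whereas the naive box with $\mu\asymp X^{-(n-1)}$ and $Y\asymp Z\asymp X$ only yields the weaker exponent $n-1<\rho+1$. Forcing the volume to remain constant pins down the shape of the box, namely $Z$ of order $X^{(\rho+1)/\rho}$, and the cancellation responsible is precisely the identity $(\rho+1)(m-\rho)=-\rho$, i.e.\ the defining quadratic for $\rho_m$. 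Once the box is this elongated in the $W$-direction, $\norm{\ux}$ can a priori far exceed $X$; it is exactly \eqref{simplif:hyp} that prevents this, by identifying $\norm{\ux}'$ with $\abs{\ux\cdot\ud}\le Y$, so that the $\delta$ appearing in $\norm{\ }'$ never enters the final constants. This is essentially a repackaging of the Schmidt--Thurnheer argument; I expect the only mildly fiddly verification to be the asymptotic inequalities securing $\norm{\ux}\le2Z$ for $X\ge X_0$.
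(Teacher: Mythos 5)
Your proposal is correct and follows essentially the same route as the paper: a single application of Minkowski's first theorem to a symmetric convex body of volume $2^n$ whose side lengths are calibrated via the identity $\rho-m=\rho/(\rho+1)$, followed by the standing hypothesis \eqref{simplif:hyp} to identify $\norm{\ux}'$ with $\abs{\ux\cdot\ud}$ and hence bound it by the $\ud$-side of the body. The only differences (a Euclidean ball instead of a cube in the directions orthogonal to $\uu,\ud$, and normalizing the $\ud$-side to equal $X$ directly rather than solving for the parameter afterwards) are cosmetic.
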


\begin{proof}
We first complete the pair $(\uu,\ud)$ to an orthonormal basis
$(\uw_1,\dots,\uw_m,\uu,\ud)$ of $\bR^n$.  Then, for any positive real
number $Y$, the set of points $\ux\in\bR^n$ satisfying
\begin{equation}
\label{simplif:lemma:convex:eq}
 \max_{1\le j\le m}\abs{\ux\cdot\uw_j}\le n^{-1/2}Y,
 \quad
 \abs{\ux\cdot\uu}\le \epsilon Y^{-\rho}
 \et
 \abs{\ux\cdot\ud}\le n^{m/2}\epsilon^{-1} Y^{\rho-m}
\end{equation}
is a compact symmetric convex body of $\bR^n$ of volume $2^n$ and so, by
Minkowski's first convex body theorem, it contains a non-zero point $\ux$ of
$\bZ^n$.  Since $m<\rho<m+1$ by \eqref{intro:ineq:rho}, the above inequalities yield
$\abs{\ux\cdot\uu}\le n^{-1/2}Y$ and $\abs{\ux\cdot\ud}\le n^{-1/2}Y$
if $Y$ is large enough in terms of $n$ and $\epsilon$.  Then we find
\[
 \norm{\ux}
 = \big( \abs{\ux\cdot\uw_1}^2+\cdots+\abs{\ux\cdot\uw_m}^2
      + \abs{\ux\cdot\uu}^2 + \abs{\ux\cdot\ud}^2 \big)^{1/2} \le Y
\]
and so $\abs{\ux\cdot\uu} \le \epsilon\norm{\ux}^{-\rho}$ by the middle
inequality in \eqref{simplif:lemma:convex:eq}.  According to our hypothesis \eqref{simplif:hyp}
and the formula for $\rho$ in \eqref{intro:ineq:rho},  this implies that
\[
 \norm{\ux}'=\abs{\ux\cdot\ud}\le c_2 Y^{\rho-m} = c_2 Y^{\rho/(\rho+1)},
\]
where $c_2=n^{m/2}\epsilon^{-1}$.  If $X$ is large enough, and if $Y$ is chosen
so that $X=c_2 Y^{\rho/(\rho+1)}$, the point $\ux$ constructed above
satisfies $\norm{\ux}'\le X$ and $\abs{\ux\cdot\uu}\le
\epsilon Y^{-\rho} = c_3X^{-\rho-1}$ where $c_3=\epsilon c_2^{\rho+1}$.
\end{proof}

Since the coordinates of $\uu$ are linearly independent over $\bQ$, the
scalar product with $\uu$ defines an injective map from $\bZ^n$ to $\bR$.
Thus, for each $X\in [1,\infty)$, there is, up to multiplication by $\pm 1$, a
unique non-zero point $\ux\in\bZ^n$ with $\norm{\ux}'\le X$ for which
$\abs{\ux\cdot\uu}$ is minimal.  We order these pairs $\pm \ux$
by increasing norm $\norm{\ux}'$ and, for each integer $i\ge 1$,
we choose a representative $\ux_i$ of the $i$-th pair for which
$\ux_i\cdot\ud\ge 0$ (unique unless $\ux_i\cdot\ud=0$).  Then, each $\ux_i$
is a primitive point of $\bZ^n$ and each pair $(\ux_i,\ux_{i+1})$ is linearly
independent.  We also set
\[
 X_i=\norm{\ux_i}' \et L_i=\abs{\ux_i\cdot\uu}
 \quad \text{for each $i\ge 1$.}
\]
By construction, the sequence $(X_i)_{i\ge 1}$ is strictly increasing, while
$(L_i)_{i\ge 1}$ is strictly decreasing.  Moreover, by Lemma \ref{simplif:lemma:convex},
we have $\abs{\ux_i\cdot\uu} \le c_3 X^{-\rho-1}$ for each $i\ge 1$ with
$X_i\ge X_0$ and each $X\in[X_0,X_{i+1})$.

Let $i_0$ denote the smallest integer $i\ge 1$ for which
$X_i\ge X_0$ and $c_3X_i^{-1}\le \epsilon(4/\delta)^{-\rho}$.  By the above,
we deduce that, for each integer $i$ with $i\ge i_0$, we have
\begin{equation}
\label{simplif:eq:Li}
 L_i\le c_3 X_{i+1}^{-\rho-1},
\end{equation}
and also $\abs{\ux_i\cdot\uu}  \le c_3X_i^{-\rho-1}\le \epsilon ((4/\delta)X_i)^{-\rho}
\le \epsilon \norm{\ux_i}^{-\rho}$ since $\norm{\ux_i}\le (4/\delta)X_i$.   Then our
hypothesis \eqref{simplif:hyp} together with the condition $\ux_i\cdot\ud\ge 0$ gives
\begin{equation}
\label{simplif:eq:Xi}
 X_i=\norm{\ux_i}'=\ux_i\cdot\ud > \delta\norm{\ux_i} \quad(i\ge i_0).
\end{equation}
From this, we deduce two consequences by adapting the
arguments of Davenport and Schmidt in \cite[Lemmas 1 and 2]{DS1968}.
The first lemma below is also implicit in \cite[Part II (v)]{Th1990}.

\begin{lemma}
\label{simplif:lemma:T2}
For each $i\ge i_0$, the scalar products $\ux_i\cdot\uu$ and $\ux_{i+1}\cdot\uu$
have opposite signs.
\end{lemma}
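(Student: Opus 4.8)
The plan is to argue by contradiction in the spirit of \cite[Lemma 1]{DS1968}. Suppose that for some $i\ge i_0$ the scalar products $\ux_i\cdot\uu$ and $\ux_{i+1}\cdot\uu$ have the same sign, and set $\uy=\ux_{i+1}-\ux_i$. Since the pair $(\ux_i,\ux_{i+1})$ is linearly independent, $\uy$ is a non-zero point of $\bZ^n$, and the goal is to show that $\uy$ approximates $\uu$ too well, in violation of the minimality built into the sequence $(\ux_j)_{j\ge1}$.

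First I would estimate $\abs{\uy\cdot\uu}$. As $\ux_i\cdot\uu$ and $\ux_{i+1}\cdot\uu$ have the same sign and $\abs{\ux_{i+1}\cdot\uu}=L_{i+1}<L_i=\abs{\ux_i\cdot\uu}$, subtraction gives $\abs{\uy\cdot\uu}=L_i-L_{i+1}<L_i$ (this identity is insensitive to whether both products are positive or both negative). Next I would bound $\norm{\uy}'=\max\{\abs{\uy\cdot\ud},(\delta/4)\norm{\uy}\}$ using \eqref{simplif:eq:Xi} for the indices $i$ and $i+1$, both $\ge i_0$. On the one hand, $\uy\cdot\ud=\ux_{i+1}\cdot\ud-\ux_i\cdot\ud=X_{i+1}-X_i$, which lies strictly between $0$ and $X_{i+1}$, so $\abs{\uy\cdot\ud}<X_{i+1}$. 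On the other hand, \eqref{simplif:eq:Xi} yields $\norm{\ux_j}<X_j/\delta$ for $j=i,i+1$, whence $(\delta/4)\norm{\uy}\le(\delta/4)(\norm{\ux_i}+\norm{\ux_{i+1}})<(X_i+X_{i+1})/4<X_{i+1}/2$. Combining the two bounds gives $\norm{\uy}'<X_{i+1}$.

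The contradiction then comes from the defining property of the sequence $(\ux_j)$. Let $\ux_j$ be the best-approximation point attached to the threshold $X=\max\{1,\norm{\uy}'\}$; since $\norm{\uy}'\le X$, minimality forces $\abs{\uy\cdot\uu}\ge\abs{\ux_j\cdot\uu}=L_j$. Because $X_{i+1}>1$ for $i\ge i_0$ (as $X_{i+1}>X_{i_0}\ge X_0$), this threshold $X$ is smaller than $X_{i+1}$, hence $X_j\le X<X_{i+1}$, so $j\le i$, and therefore $L_j\ge L_i$ because $(L_j)_{j\ge1}$ is decreasing. Thus $\abs{\uy\cdot\uu}\ge L_i$, contradicting $\abs{\uy\cdot\uu}<L_i$ obtained above. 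This contradiction establishes the lemma.

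I expect the only genuinely delicate step to be the bound $\norm{\uy}'<X_{i+1}$: the triangle inequality for $\norm{\ }'$ by itself only gives $\norm{\uy}'\le X_i+X_{i+1}$, which is useless, so one really needs \eqref{simplif:eq:Xi}, both through $\ux_j\cdot\ud=X_j$ and through $\norm{\ux_j}<X_j/\delta$, to push $\norm{\uy}'$ strictly below $X_{i+1}$. Everything else is routine bookkeeping about best-approximation sequences, exactly as in \cite{DS1968}.
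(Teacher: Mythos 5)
Your proof is correct and is essentially the paper's own argument: the same difference point $\ux_{i+1}-\ux_i$, the same use of \eqref{simplif:eq:Xi} to get $\abs{\uy\cdot\ud}=X_{i+1}-X_i<X_{i+1}$ and $(\delta/4)\norm{\uy}<X_{i+1}$, and the same contradiction with the minimality defining the sequence $(\ux_j)$ (the paper merely states it as $\abs{\uy\cdot\uu}<L_i$ forcing $\norm{\uy}'\ge X_{i+1}$, which is your argument in contrapositive form).
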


\begin{proof}
Suppose on the contrary that they have the same sign for some $i\ge i_0$.
Then the point $\ux=\ux_{i+1}-\ux_i\in\bZ^n$ is non-zero and satisfies
\[
\abs{\ux\cdot\uu} = L_i - L_{i+1} < L_i.
\]
By construction, this implies that $\norm{\ux}' \ge X_{i+1}$.  However,
using the relations \eqref{simplif:eq:Xi}, we find that
$\abs{\ux\cdot\ud} = X_{i+1}-X_i < X_{i+1}$ and
\[
 \frac{\delta}{4}\norm{\ux}
    \le \frac{\delta}{4}\norm{\ux_{i+1}}+ \frac{\delta}{4}\norm{\ux_i}
    \le \frac{X_{i+1}}{4} + \frac{X_i}{4} < X_{i+1},
\]
which imply that $\norm{\ux}' <X_{i+1}$, a contradiction.
\end{proof}

\begin{lemma}
\label{simplif:lemma:lindep}
For each $i>i_0$, the ratios $L_{i-1}/L_{i}$ and $X_{i+1}/X_{i}$
have the same integer part and, for this integer $t_i$, we have
\[
 \ux_{i+1}=t_i\ux_i+\ux_{i-1}.
\]
\end{lemma}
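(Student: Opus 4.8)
The plan is to adapt the argument of Davenport and Schmidt \cite[Lemma 2]{DS1968}. Fix an integer $i>i_0$, so that the relations \eqref{simplif:eq:Li} and \eqref{simplif:eq:Xi} and Lemma \ref{simplif:lemma:T2} all apply to each of the three indices $i-1$, $i$, $i+1$. Let $\sigma\in\{-1,1\}$ denote the sign of $\ux_i\cdot\uu$; by Lemma \ref{simplif:lemma:T2}, both $\ux_{i-1}\cdot\uu$ and $\ux_{i+1}\cdot\uu$ have the opposite sign $-\sigma$. First I would set $t=\lfloor L_{i-1}/L_i\rfloor$, a positive integer since $L_{i-1}>L_i>0$, and form $\uy=\ux_{i-1}+t\ux_i$, a non-zero point of $\bZ^n$ because $\ux_{i-1},\ux_i$ are linearly independent. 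The ratio $L_{i-1}/L_i$ is not an integer, for otherwise $\ux_{i-1}\mp t\ux_i$ would be a non-zero integer point in the kernel of the map $\ux\mapsto\ux\cdot\uu$, which is injective since the coordinates of $\uu$ are $\bQ$-linearly independent. Hence $0<L_{i-1}-tL_i<L_i$, so that $\uy\cdot\uu=-\sigma(L_{i-1}-tL_i)$ satisfies $0<\abs{\uy\cdot\uu}<L_i$ and has the same sign $-\sigma$ as $\ux_{i+1}\cdot\uu$.

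Next I would record the minimality property built into the definition of the sequence: for $X_j\le Y<X_{j+1}$ the minimum of $\abs{\,\cdot\,\uu}$ over the non-zero integer points with $\norm{\,\cdot\,}'\le Y$ is exactly $L_j$, so that any non-zero $\ux\in\bZ^n$ with $\abs{\ux\cdot\uu}<L_i$ must satisfy $\norm{\ux}'\ge X_{i+1}$. Applied to $\uy$ this gives $\norm{\uy}'\ge X_{i+1}$. On the other hand, using \eqref{simplif:eq:Xi} for the indices $i-1$ and $i$, we get $\uy\cdot\ud=\ux_{i-1}\cdot\ud+t\,\ux_i\cdot\ud=X_{i-1}+tX_i>0$ together with $(\delta/4)\norm{\uy}\le(\delta/4)(\norm{\ux_{i-1}}+t\norm{\ux_i})<(X_{i-1}+tX_i)/4$; hence $\norm{\uy}'=X_{i-1}+tX_i$, and therefore $X_{i-1}+tX_i\ge X_{i+1}$.

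The core of the proof is then to show that $\uy=\ux_{i+1}$. Consider $\uz=\uy-\ux_{i+1}$. If $\uz=0$ we are done; otherwise $\uz\cdot\uu=-\sigma\big((L_{i-1}-tL_i)-L_{i+1}\big)$ is a difference of two quantities lying in $(0,L_i)$, so $0<\abs{\uz\cdot\uu}<L_i$ and hence $\norm{\uz}'\ge X_{i+1}$ by the same minimality property. Here I would carry out the delicate estimate in the spirit of \cite[Lemma 2]{DS1968}: bounding $\norm{\uz}'$ from above --- using the convex-body estimate \eqref{simplif:eq:Li} to control $L_{i-1}$, and thereby the integer $t$ and the quantity $X_{i-1}+tX_i$ relative to $X_{i+1}$, together with the explicit shape of $\norm{\,\cdot\,}'$ --- so as to contradict $\norm{\uz}'\ge X_{i+1}$. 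This forces $\uz=0$, i.e. $\ux_{i+1}=t\ux_i+\ux_{i-1}$.

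Once this relation is in hand, the two statements about integer parts follow by pairing with $\uu$ and with $\ud$. Pairing with $\uu$ and using the sign pattern gives $\ux_{i+1}\cdot\uu=t\sigma L_i-\sigma L_{i-1}=-\sigma(L_{i-1}-tL_i)$, whence $L_{i-1}=tL_i+L_{i+1}$ and therefore $\lfloor L_{i-1}/L_i\rfloor=t$ since $0<L_{i+1}<L_i$; pairing with $\ud$ and using \eqref{simplif:eq:Xi} gives $X_{i+1}=tX_i+X_{i-1}$, whence $\lfloor X_{i+1}/X_i\rfloor=t$ since $0<X_{i-1}<X_i$. Thus $t_i=t$ has all the required properties. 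I expect the main obstacle to be precisely the bracketed step: showing that $\uy$ is \emph{exactly} $\ux_{i+1}$, equivalently that $\ux_{i-1},\ux_i,\ux_{i+1}$ are linearly dependent in the stated way. This is the delicate size comparison that forms the heart of the Davenport--Schmidt method, and it is where the convex-body bound \eqref{simplif:eq:Li} is genuinely needed.
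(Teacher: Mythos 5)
Your frame is fine as far as it goes: the sign analysis via Lemma \ref{simplif:lemma:T2}, the observation that $\abs{\uy\cdot\uu}=L_{i-1}-tL_i\in(0,L_i)$ forces $\norm{\uy}'\ge X_{i+1}$, the identity $\norm{\uy}'=X_{i-1}+tX_i$ via \eqref{simplif:eq:Xi}, and the final reading-off of the two integer parts from the relation $\ux_{i+1}=t\ux_i+\ux_{i-1}$ are all correct. But the step you bracket off as ``the delicate estimate'' is exactly where the proof lives, and the route you indicate for it would not work. To force $\uz=\uy-\ux_{i+1}=0$ you need $\norm{\uz}'<X_{i+1}$; since $\uz\cdot\ud=X_{i-1}+tX_i-X_{i+1}\ge 0$ by your own step, this requires $tX_i+X_{i-1}<2X_{i+1}$, i.e.\ an upper bound for $t=\lfloor L_{i-1}/L_i\rfloor$ of the order of $X_{i+1}/X_i$. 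The convex-body estimate \eqref{simplif:eq:Li} cannot deliver this: it gives only upper bounds for $L_{i-1}$ and $L_i$ and no lower bound for $L_i$, so ``controlling $L_{i-1}$'' says nothing about the ratio $L_{i-1}/L_i$.

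The missing mechanism is a second, independent application of the standing hypothesis \eqref{simplif:hyp} to a further auxiliary point. With $t'=\lfloor X_{i+1}/X_i\rfloor$ one forms $\uy'=\ux_{i+1}-t'\ux_i$, checks via \eqref{simplif:eq:Xi}, \eqref{simplif:eq:Li} and the definition of $i_0$ that $\abs{\uy'\cdot\uu}=L_{i+1}+t'L_i\le\epsilon\norm{\uy'}^{-\rho}$, so that \eqref{simplif:hyp} forces $\norm{\uy'}'=\abs{\uy'\cdot\ud}=X_{i+1}-t'X_i<X_i$; minimality of $\ux_i$ then gives $L_{i-1}\le L_{i+1}+t'L_i<(t'+1)L_i$, i.e.\ $t\le t'$. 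This is precisely the bound you lack: with it, $X_{i-1}+tX_i\le X_{i-1}+X_{i+1}<2X_{i+1}$ and $(\delta/4)\norm{\uz}<X_{i+1}$, so $\norm{\uz}'<X_{i+1}$ and your contradiction goes through (this is the paper's own three-point argument, with $s=\lfloor L_{i-1}/L_i\rfloor$ and $t'=\lfloor X_{i+1}/X_i\rfloor$ compared in both directions before the final elimination). So the gap is not a routine size comparison: without invoking \eqref{simplif:hyp} on this auxiliary point (or an equivalent substitute), the bound on $t$ — and hence the lemma — cannot be reached from \eqref{simplif:eq:Li} alone.
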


\begin{proof}
Let $s$ and $t$ denote respectively the integer parts of $L_{i-1}/L_{i}$
and $X_{i+1}/X_{i}$, for a choice of index $i > i_0$.   Our first goal is to show
that $s=t$.  To this end, we note, using \eqref{simplif:eq:Xi},
that the non-zero integer point $\uy=\ux_{i+1}-t\ux_{i} \in \bZ^n$ satisfies
\begin{equation}
\label{simplif:lemma:lindep:eq0}
 \norm{\uy}
   \le \norm{\ux_{i+1}} +\frac{X_{i+1}}{X_i}\norm{\ux_i}
   \le \frac{X_{i+1}}{\delta} +\frac{X_{i+1}}{X_i}\frac{X_i}{\delta}
   \le \frac{2}{\delta}X_{i+1}.
\end{equation}
Since $\ux_{i}\cdot\uu$ and $\ux_{i+1}\cdot\uu$ have opposite signs
by Lemma \ref{simplif:lemma:T2}, we also find
\[
 \abs{\uy\cdot\uu} = L_{i+1}+tL_{i}
  \le 2 \frac{X_{i+1}}{X_{i}}L_{i}
  \le \frac{2c_3}{X_i}X_{i+1}^{-\rho},
\]
where the last inequality uses \eqref{simplif:eq:Li}.  We deduce that
$\abs{\uy\cdot\uu}\le \epsilon\norm{\uy}^{-\rho}$ because, as $i\ge i_0$, we
have $2c_3/X_i \le 2\epsilon(4/\delta)^{-\rho}\le \epsilon(2/\delta)^{-\rho}$.
Then, hypothesis \eqref{simplif:hyp} combined with  \eqref{simplif:eq:Xi} yields
\begin{equation*}
\label{simplif:lemma:lindep:eq1}
 \norm{\uy}'=\abs{\uy\cdot\ud}=X_{i+1}-tX_{i} < X_{i}.
\end{equation*}
By the minimality of $\ux_i$, this implies that $\abs{\uy\cdot\uu}\ge L_{i-1}$.  Since
$\abs{\uy\cdot\uu}=L_{i+1}+tL_{i}$, we conclude that $L_{i-1}<(t+1)L_{i}$ and so
$s\le t$.

Similarly, Lemma \ref{simplif:lemma:T2} implies that the non-zero point $\uz=s\ux_i+\ux_{i-1}
\in \bZ^n$ satisfies
\[
 \abs{\uz\cdot\uu} = L_{i-1}-sL_{i} < L_{i}.
\]
So, we must have $\norm{\uz}' \ge X_{i+1}$.  This yields
\[
 t\le \frac{X_{i+1}}{X_{i}}
  \le \frac{\norm{\uz}'}{X_{i}}
  \le \frac{sX_i+X_{i-1}}{X_{i}} < s + 1,
\]
thus $t\le s$, and so $s=t$.

Finally, consider the point $\ux=\ux_{i+1}-t\ux_{i}-\ux_{i-1}\in\bZ^n$. Using
Lemma \ref{simplif:lemma:T2} and the equality $s=t$, we find
\begin{align*}
 \abs{\ux\cdot\uu}
  &=\abs{L_{i+1}+tL_i-L_{i-1}}=\abs{L_{i+1}-(L_{i-1}-sL_i)} < L_i,\\
 \abs{\ux\cdot\ud}
  &=\abs{(X_{i+1}-tX_i)-X_{i-1}} < X_i.
\end{align*}
Since $\ux=\uy-\ux_{i-1}$, estimates \eqref{simplif:eq:Xi} and
\eqref{simplif:lemma:lindep:eq0} yield
\[
 \frac{\delta}{4}\norm{\ux}
  \le \frac{\delta}{4}\norm{\uy} + \frac{\delta}{4}\norm{\ux_{i-1}}
  \le \frac{1}{2}X_{i+1} + \frac{1}{4}X_{i-1}
     < X_{i+1},
\]
thus $\norm{\ux}'< X_{i+1}$.  Since $\abs{\ux\cdot\uu}<L_i$, this means
that $\ux=0$, and so $\ux_{i+1}=t\ux_i+\ux_{i-1}$.
\end{proof}

The last lemma leads to a contradiction by arguing as in \cite[Lemma 3]{DS1967}.
Indeed, Lemma \ref{simplif:lemma:lindep} shows that the subgroup of $\bR^n$
spanned by $\ux_i$ and $\ux_{i+1}$ is independent of $i$ for $i\ge i_0$,
thus $\ux_i\wedge\ux_{i+1} = \pm \ux_{j}\wedge\ux_{j+1}$ for any choice
of integers $i,j$ with $i_0\le i<j$.  Contracting these bi-vectors with $\uu$ and
then taking norms, we obtain
\begin{align*}
 \norm{(\ux_i\cdot\uu)\ux_{i+1}-(\ux_{i+1}\cdot\uu)\ux_i}
 &= \norm{(\ux_j\cdot\uu)\ux_{j+1}-(\ux_{j+1}\cdot\uu)\ux_j}\\
 &\le \frac{1}{\delta}X_{j+1}L_j+\frac{1}{\delta}X_{j}L_{j+1}
  \le \frac{2}{\delta}X_{j+1}L_j
  \le \frac{2c_3}{\delta}X_{j+1}^{-\rho}.
\end{align*}
Letting $j$ go to infinity for fixed $i\ge i_0$, we deduce that $(\ux_i\cdot\uu)\ux_{i+1} =
(\ux_{i+1}\cdot\uu)\ux_i$.  However this is impossible since $\ux_i$
and $\ux_{i+1}$ are linearly independent over $\bR$ and $\ux_i\cdot\uu
\neq 0$. This contradiction completes the proof of Theorem \ref{simplif:thm}.

\section*{Acknowledgement}
The authors thanks Michel Laurent for the helpful reference to Erd\"os's paper \cite{Er1958}.
The work of both authors was partially supported by an NSERC discovery grant.

%
%

\end{document}